\documentclass[leqno]{amsart}
\usepackage[utf8]{inputenc}
\usepackage[english]{babel}
\usepackage{newclude}
\usepackage{amsfonts,amssymb,amsmath,amsgen, amsthm}
\usepackage{pdfsync}
\usepackage{enumerate}
\usepackage{centernot}
\usepackage{comment}
\usepackage{hyperref}
\usepackage[a4paper, total={6in, 8in}]{geometry}

\newtheorem{theorem}{Theorem}[section]
\newtheorem{lem}[theorem]{Lemma}
\newtheorem{prop}[theorem]{Proposition}

\theoremstyle{remark}
\newtheorem{remark}[theorem]{Remark}
\theoremstyle{definition}

\def\R{\mathbb R}
\def\C{\mathbb C}
\def\N{\mathbb N}
\def\d{\partial}

\def\T{\mathbb T}
\newcommand{\ii}{\mathrm{i}\mkern1mu}
\def\eps{\varepsilon}
\def\Q{Q_{m,\varepsilon}}
\def\A{\mathbf A}

\title[Cnoidal Waves for the Damped NLS Equation]{Stability of Cnoidal Waves for the Damped Nonlinear Schr\"odinger Equation}

\author[P. Antonelli]{Paolo Antonelli}
\address{Gran Sasso Science Institute, viale Francesco Crispi, 7, 67100 L'Aquila, Italy}
\email{paolo.antonelli@gssi.it}

\author[B. Shakarov]{Boris Shakarov}
\address{Gran Sasso Science Institute, viale Francesco Crispi, 7, 67100 L'Aquila, Italy}
\email{boris.shakarov@gssi.it}

\begin{document}
\maketitle

\begin{abstract}
We consider the cubic nonlinear Schr\"odinger (NLS) equation with a linear damping on the one-dimensional torus and we investigate the stability of some solitary wave profiles within the dissipative dynamics. The undamped cubic NLS equation is well known to admit a family of periodic waves given by Jacobi elliptic functions of cnoidal type.
We show that the family of cnoidal waves is orbitally stable. More precisely, by considering a sufficiently small perturbation of a given cnoidal wave at the initial time, the evolution will always remain close (up to symmetries of the equation) to the cnoidal wave whose mass is modulated according to the dissipative dynamics.
This result extends the concept of orbital stability to this non-Hamiltonian evolution.
\newline
Since cnoidal waves are not exact solutions to the damped NLS, the perturbation is forced away from the family of solitary wave profiles. In order to control this secular growth of the error, we find a first-order approximation of the solitary wave that takes into account the dissipative term. Then we use a suitable, exponentially decreasing Lyapunov functional that controls the $H^1$-norm of the perturbation around the approximated solitons.
\end{abstract}

\section{Introduction}
In this work, we study the following cubic nonlinear Schr\"odinger (NLS) equation with linear damping
\begin{equation}\label{eq:Nls_damp}
\begin{cases}
\ii \partial_t \psi + \partial_{xx} \psi + |\psi|^2 \psi + \ii \varepsilon \psi =0, \\
\psi(0)=\psi_0 \in H^1(\T), 
\end{cases}
\end{equation}
in the $2\pi-$periodic torus $\T$, i.e. $\psi:[0, \infty)\times\T\to\C$. Here $\eps>0$ is a small damping parameter. The cubic NLS equation, namely equation \eqref{eq:Nls_damp} with $\eps=0$, is a canonical model for nonlinear wave propagation \cite{Whit, Ab11, AbSe81}, with applications to nonlinear optics \cite{Ke65, Be98}, plasma physics \cite{Zak, TaWa69} and water waves \cite{Za68}.
\newline
To provide a more accurate description of some physical phenomena, a small dissipative effect can be included in the model. 
In those cases, the classical NLS equation is augmented by viscous terms or linear and nonlinear damping terms \cite{Mal}, see also \cite{PSS}. 
A linear damping, such as in equation \eqref{eq:Nls_damp}, describes a loss term acting at a constant rate. That is the case for instance in plasma physics when the wave slowly releases energy by heating the particles in the plasma \cite{NichGold}. 
Dissipative effects in nonlinear wave propagation described by linearly damped NLS equations appear also in different contexts, see for instance \cite{NB, Segur, PSS, CiPa01}. 
The local well-posedness of solutions to \eqref{eq:Nls_damp} follows from the fact that $H^1(\T)$ is an algebra in one dimension (see for instance Theorem $3.5.1$ in \cite{Ca03}). The global well-posedness follows from the uniform bound on the $L^2$-norm of solutions (that is decreasing in time), uniform bounds on compact time intervals for the energy functional defined in \eqref{eq:enCnIntr} below and the fact that the nonlinearity is mass-subcritical (see the computations in \cite{Ts90} or \cite{In19} for instance). 
\newline
The dissipation in \eqref{eq:Nls_damp} modifies completely the qualitative behavior of solutions. Indeed, equation \eqref{eq:Nls_damp} is no longer Hamiltonian, so the usual functionals associated with physical quantities, such as the total mass
\begin{equation*} 
M[\psi(t)] = \frac{1}{2}\int_0^{2\pi} |\psi(t)|^2dx,
\end{equation*}
and the total energy
\begin{equation}\label{eq:enCnIntr}
E[\psi(t)] = \int_0^{2\pi} \frac{1}{2}|\partial_x \psi(t)|^2- \frac{1}{4} |\psi(t)|^{4} dx,
\end{equation}
are no longer conserved along the dissipative dynamics. 
In particular, for the total mass, we have
\begin{equation}\label{eq:mass}
M[\psi(t)] = e^{-2\varepsilon t} M[\psi_0],
\end{equation}
which implies that all solutions asymptotically converge to zero at an exponential rate.
\newline
Consequently, the presence of damping does not allow solitary wave solutions to equation \eqref{eq:Nls_damp}. Let us recall that the cubic NLS equation admits a large family of periodic waves of the form
\begin{equation*}
\psi(t,x) = e^{\ii \omega t} Q(x),
\end{equation*} 
where $\omega\in\R$ and the profile $Q\in H^1(\T)$ satisfies the equation
\begin{equation} \label{eq:sol_wave}
-\partial_{xx} Q + \omega Q - |Q|^2 Q = 0.
\end{equation}
Non-constant, real-valued, periodic solutions of \eqref{eq:sol_wave} are well known to be given by (scaled versions of) the Jacobi elliptic functions, see Subsection \ref{ss:jac} below for more details.
\newline
This work deals with one particular family of such waves, namely the cnoidal waves. We show that, despite the fact that one specific cnoidal wave cannot be stable under the flow determined by \eqref{eq:Nls_damp}, the whole family actually is. That is, if we denote by $Q_m$ the real-valued $2\pi-$periodic cnoidal wave with total mass $m$, i.e. $M[Q_m]=m$, and by
\newline
\begin{equation}\label{eq:cn_fam}
\mathcal{F} = \{ Q_m, m>0\} 
\end{equation}
the whole family of cnoidal waves, then if the initial datum is sufficiently close to $\mathcal F$, its evolution will always stay close to it. 
More precisely, we are going to show that the solution remains close (up to invariances of the equation) to the cnoidal wave $Q_{m(t)}$ with total mass $m=m(t)$, where according to \eqref{eq:mass}, we set 
\begin{equation}\label{eq:mass2}
m(t)=e^{-2\eps t}m_0,
\end{equation}
with $m_0$ determined by the initial datum.
We will give a more quantitative statement in Theorem \ref{thm:mainCN} below.
\newline
To have a comparison with the Hamiltonian dynamics, let us recall some basic facts about the stability of cnoidal waves for the cubic, undamped NLS equation.
In \cite{GuCoTs17} cnoidal waves are characterized as the unique (up to invariances of the equation) minimizers of the energy with a given mass, in the space of half-anti-periodic functions $H^1(\T)\cap\mathcal A$, see \eqref{eq:hap} below for the precise definition.
This result, combined with the conservation of total energy and mass, readily implies the orbital stability of cnoidal waves against half-anti-periodic perturbations, in the spirit of the argument introduced by Cazenave and Lions \cite{CaLi82}.
Moreover, cnoidal waves are also spectrally stable against periodic perturbations of the same period \cite{IvLa08}. See also \cite{GaPe14, LoMoNaPa21} for related results.
\newline
For the dynamics given by \eqref{eq:Nls_damp}, the presence of the linear damping clearly prevents us to use these arguments so we need to exploit a different strategy.
Let us remark that the modulational stability analysis performed in \cite{We85} also applies to \eqref{eq:Nls_damp}. However, this would provide an orbital stability result for the cnoidal wave $Q_{m_0}$ only up to times of order $\eps^{-1}$.
\newline
Our strategy builds upon the Lyapunov-type approach developed in \cite{We86}. Even though this procedure is extensively used in the context of nonlinear dispersive equations, the dissipative term in \eqref{eq:Nls_damp} introduces several mathematical difficulties that we are going to present in what follows.
\newline
By taking into account the phase shift and space translation invariances of the equation, we first decompose the solution as the sum of the cnoidal wave and a perturbation as follows
\begin{equation*}
\psi(t,x) = (Q_{m(t)}(x - y(t)) + \xi(t,x-y(t)))e^{\ii\gamma(t)}.
\end{equation*}
Notice that the mass $m(t)$ of the cnoidal wave is chosen as in \eqref{eq:mass2}, so to follow the dissipative dynamics, whereas the parameters $\gamma(t)$ and $y(t)$ minimize the distance between $\psi(t, \cdot)$ and the orbit of $Q_{m(t)}$. As for the Hamiltonian case, this choice of parameters eliminates some secular modes in the linearized dynamics.
Consequently, the linearized operator around $Q_{m(t)}$ is positive definite for functions in $H^1(\T)\cap\mathcal A$ satisfying suitable orthogonality conditions. 
However, the coercivity constant is proportional to the total mass (this is related to spectral properties of the linearized operator, see Section \ref{sub:spec}) and hence it degenerates when $m(t)\to0$. 
To overcome this difficulty, we further restrict our analysis to even, half-anti-periodic functions $H^1(\T)\cap\mathcal A^+$, see \eqref{eq:a_p} for the definition of $\mathcal A^+$. 
The even symmetry eliminates the subspace where the degeneracy takes place so that the linearized operator restricted to $H^1(\T)\cap\mathcal A^+$ (with the usual orthogonality conditions) is uniformly coercive in $m>0$. On the other hand, by working on $\mathcal A^+$, we do not need to modulate our solution by a translation parameter anymore.
\newline
The natural candidate Lyapunov functional is given by the difference between the total energy of the solution $\psi$ and the total energy of the cnoidal wave,
\begin{equation}\label{eq:lrough}
\mathcal L[\psi] = E[\psi(t)] - E[Q_{m(t)}] .
\end{equation}
For small perturbations $\xi$, it is possible to prove that $\mathcal L$ is equivalent to the quadratic form associated with the linearized operator around $Q_{m(t)}$, hence it is coercive and it controls the $H^1$-norm of $\xi$.
However, it is not possible to derive satisfactory bounds on $\mathcal L$. This is because, since $Q_m$ does not correspond to any (dissipative) solitary wave profile for \eqref{eq:Nls_damp}, the equation for the perturbation $\xi$ bears a forcing term of size $\eps$, depending on $Q_{m(t)}$, see \eqref{eq:xi} below.
This implies that the smallness of $\xi$ can be inferred only up to times of order $\eps^{-1}$.
\newline
To overcome this difficulty, we determine a first-order correction for $Q_m$ and introduce an approximating profile $Q_{m, \eps}$, that takes into account also the dissipative term. 
The complex-valued profile $Q_{m, \eps}$ retains some of the main properties of $Q_m$. In particular, the linearized operator around $Q_{m, \eps}$ is uniformly coercive. 
Moreover, the perturbation around $Q_{m, \eps}$ satisfies a dynamics where the forcing term is of order $\eps^2$ now. This fact, combined with the smallness of the difference between $Q_m$ and $\Q$,
finally provides the stability property for the family $\mathcal F$ defined in \eqref{eq:cn_fam}.
We remark that a similar approach was also used in \cite{DeGr96} for the damped KdV equation. Our case is more complicated because the solution is complex-valued (hence phase shifts need to be taken into account) and due to the degeneracy of the linearized operator for $m\to 0$, as discussed above.
\newline
We are now ready to state our main result. 
The space $\mathcal A^+$ appearing in the statement represents the space of even, half-anti-periodic functions on $\T$, i.e.
\begin{equation*}
\mathcal A^+=\{u\in L^2(\T)\,:\,u(x+\pi)=-u(x)=-u(-x)\},
\end{equation*}
see also Subsection \ref{ss:per} for a more detailed discussion.

\begin{theorem}\label{thm:mainCN}
Let $\psi_0 \in H^1(\T) \cap \mathcal{A}^+$, $\psi \in C([0,\infty), H^1(\T) \cap \mathcal{A}^+)$ be the corresponding solution and let $m$ be defined by \eqref{eq:mass2}, where $m_0=M[\psi_0]$. There exists $\varepsilon^* = \varepsilon^*(m_0)> 0$ such that if 
\begin{displaymath}
\inf_{\gamma_0 \in [0,2\pi)} \| Q_{m(0)} - e^{\ii \gamma_0}\psi_0\|_{H^1} \leq \varepsilon < \varepsilon^*,
\end{displaymath}
then there exist $\gamma\in C^1(\R;\R)$ and $C = C(m_0) >0$ such that, for any $t >0$, we have
\begin{equation} \label{eq:result}
\| Q_{m(t)} - e^{\ii \gamma(t)}\psi(t)\|_{H^1} \leq C \sqrt{\varepsilon }e^{-\varepsilon t}.
\end{equation}
\end{theorem}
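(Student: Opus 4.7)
Following the Lyapunov strategy announced in the introduction, which adapts \cite{We86} to the dissipative flow in the spirit of \cite{DeGr96}, I would first construct $\Q=Q_m+\ii\eps R_m$, with $R_m\in\mathcal A^+$ real, obtained by matching orders in $\eps$ after inserting the ansatz $e^{\ii\gamma(t)}\Q(x)$, with $m(t)=e^{-2\eps t}m_0$ and $\dot\gamma\approx\omega(m)$, into \eqref{eq:Nls_damp}. Expanding the residual order by order fixes $\dot\gamma=\omega(m)$ at order $\eps^0$ and produces the linear equation $(-\partial_{xx}+\omega-Q_m^2)R_m=Q_m-2m\,\partial_m Q_m$ at order $\eps^1$. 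The right-hand side is $L^2$-orthogonal to $Q_m$ (by differentiating $\|Q_m\|_{L^2}^2=2m$), which is the Fredholm condition for solvability since $Q_m$ spans the kernel of this operator on $\mathcal A^+$. By construction the residual of $e^{\ii\gamma(t)}\Q$ in \eqref{eq:Nls_damp} is then $O(\eps^2)$ in $H^1$. I would then decompose $\psi(t,x)=e^{\ii\gamma(t)}(\Q(x)+\eta(t,x))$ and fix $\gamma\in C^1$ via an implicit function theorem argument enforcing orthogonality of $\eta$ to the phase direction $\ii\Q$. The restriction to $\mathcal A^+$ is essential here: it automatically removes the translation mode, so only the phase has to be modulated.

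\textbf{Step 2: coercive Lyapunov functional.} I would then work with
\begin{equation*}
\Lambda(t)=E[\psi(t)]-E[\Q]-\omega(m(t))\bigl(M[\psi(t)]-M[\Q]\bigr),
\end{equation*}
which expands around $\Q$ as $\tfrac12\langle\mathcal H_{m,\eps}\eta,\eta\rangle+O(\|\eta\|_{H^1}^3)$, where $\mathcal H_{m,\eps}$ is an $O(\eps)$ perturbation of the standard linearized operator $\mathcal H_m$ around $Q_m$. Combined with the orthogonality from Step 1, the spectral analysis on $\mathcal A^+$ recalled in the paper yields $\langle\mathcal H_m\eta,\eta\rangle\geq c(m_0)\|\eta\|_{H^1}^2$ uniformly in $m\in(0,m_0]$, precisely because the subspace where coercivity degenerates as $m\to 0$ lies outside $\mathcal A^+$. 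A perturbative argument transfers this bound to $\mathcal H_{m,\eps}$ for $\eps<\eps^*$, so that $\Lambda(t)\simeq\|\eta(t)\|_{H^1}^2$ throughout the bootstrap region $\|\eta\|_{H^1}\ll 1$.

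\textbf{Step 3: differential inequality and bootstrap.} Differentiating $\Lambda$ along the flow, using \eqref{eq:mass} and the evolution equation for $\eta$, the damping produces a dominant dissipative term $-2\eps\Lambda$, while the inhomogeneity coming from the Step~1 residual together with the time-dependence $\dot m\,\partial_m\Q$ is controlled by $C\eps^2\|\eta\|_{H^1}$ times a factor that inherits the exponential decay of $Q_{m(t)}$; cubic contributions from the nonlinearity are absorbed inside the bootstrap. Young's inequality and Grönwall's lemma then yield $\|\eta(t)\|_{H^1}\lesssim\sqrt{\eps}\,e^{-\eps t}$, and combined with $\|\Q-Q_m\|_{H^1}\lesssim\eps$ the triangle inequality produces \eqref{eq:result}. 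A standard continuity argument closes the bootstrap, valid as long as $\sqrt{\eps}$ remains below the coercivity threshold, i.e.\ for $\eps<\eps^*(m_0)$.

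The principal obstacle I anticipate is the uniform-in-$m$ coercivity of Step 2, on which the whole scheme rests: without it the argument would only control $\eta$ on times of order $\eps^{-1}$, as in the modulational analysis of \cite{We85}. Quantifying the spectral gap requires a detailed study of the Hill's-type operator associated with the cnoidal potential, together with a perturbative argument covering $\mathcal H_{m,\eps}$. A close second is Step 1 itself: verifying that the residual is genuinely $O(\eps^2)$ in $H^1$ requires the Fredholm solvability sketched above and tight control of $\partial_m Q_m$ uniformly down to $m=0$, where the family $\mathcal F$ approaches the zero solution.
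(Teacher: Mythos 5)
Your overall architecture (first-order corrected profile, uniformly coercive Lyapunov functional on $\A$, Gronwall) matches the paper's, and your explicit ansatz $\Q=Q_m+\ii\eps R_m$ with $L_-R_m=Q_m-2m\partial_mQ_m$ is a legitimate variant of the paper's implicit-function-theorem construction (the Fredholm condition $(Q_m-2m\partial_mQ_m,Q_m)=2m-m\,\partial_m\|Q_m\|_{L^2}^2=0$ does hold). The genuine gap is in Step 3. The differential inequality you obtain is, schematically,
\begin{equation*}
\frac{d}{dt}\Lambda \leq -2\eps\Lambda + C\eps^2\sqrt{m(t)}\,\Lambda^{1/2},
\qquad \sqrt{m(t)}=\sqrt{m_0}\,e^{-\eps t},
\end{equation*}
and the forcing decays at \emph{exactly} the same exponential rate $e^{-\eps t}$ as the homogeneous solution $\Lambda^{1/2}$. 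This resonance produces secular growth, $\Lambda^{1/2}(t)\lesssim(\Lambda^{1/2}(0)+\eps^2 t)e^{-\eps t}$, and no choice of weight in Young's inequality removes the factor $t$ while keeping the rate $e^{-\eps t}$: absorbing the forcing into the dissipation costs a factor $e^{\delta\eps t}$ for every $\delta>0$, which degrades the decay to $e^{-(1-\delta/2)\eps t}$ and breaks \eqref{eq:result} (the error would then eventually dominate $\|Q_{m(t)}\|_{H^1}\sim\sqrt{m_0}e^{-\eps t}$). The refined estimate therefore only gives $\|\eta(t)\|_{H^1}\lesssim\sqrt\eps\,e^{-\eps t}$ for $t\lesssim\eps^{-3/2}$ --- this is precisely Proposition \ref{thm:L_bound}, and it is where the $\sqrt\eps$ (rather than $\eps$) in \eqref{eq:result} actually comes from, a point your write-up asserts but does not derive. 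What is missing is the second stage of the paper's argument: at $t^*=\eps^{-3/2}$ the mass is exponentially small, $M[\psi(t^*)]=m_0e^{-2/\sqrt\eps}\leq\sqrt\eps$, so for $t\geq t^*$ one abandons the corrected profile and falls back on the \emph{uncorrected} Lyapunov estimate of Proposition \ref{thm:L_weak} around $Q_m$, which closes precisely when the mass is small. Without this hand-off your bootstrap does not extend to all $t>0$.

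Two smaller issues. Your functional $\Lambda=E-E[\Q]-\omega(M-M[\Q])$ omits the linear term $-\eps(\ii(Q_m-2m\partial_mQ_m),\cdot)$ appearing in \eqref{eq:action_new}; since $\Q$ is a critical point of $S_{m,\eps}$ but not of $E+\omega M$, without that term the expansion carries a first-order contribution of size $\eps\sqrt m\,\|\eta\|_{H^1}$, which destroys the $\eps^2$ gain you need. Also, modulating $\gamma$ so that $(\eta,\ii\Q)=0$ exactly forces you to estimate $|\lambda_{m,\eps}-\dot\gamma|$ through $\partial_m\Q$, whose $L^2$-norm is not controlled by \eqref{eq:Q_close}; the paper deliberately keeps the phase attached to $Q_m$ and works with the almost-orthogonality \eqref{eq:ortho_con1} to avoid exactly this.
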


Estimate \eqref{eq:result} not only says that the solution $\psi(t)$ converges to zero as $t\to\infty$, as already remarked in \eqref{eq:mass}, but also that, at any time $t>0$, it remains close to the orbit of $Q_{m(t)}$.
As already discussed before, the main point in our analysis will be to prove estimate \eqref{eq:result} with $Q_m$ replaced by the approximating profile $Q_{m, \eps}$. More precisely, we first need to construct an approximating profile whose equation takes into account the damping term too. Then we define a Lyapunov functional that eventually yields a suitable stability estimate for $Q_{m, \eps}$.
Estimate \eqref{eq:result} then comes from the fact that $Q_{m, \eps}$ is $\eps-$close to $Q_m$.
\newline
While $Q_{m, \eps}$ here is constructed as a first-order correction to $Q_m$, it may also be regarded as an approximate dissipative soliton for the dynamics, in the spirit of \cite{AfAkSo96}. A rigorous construction of a ground state solution for a similar model was done in \cite{HaIbMa21}. We plan to investigate this problem for equation \eqref{eq:Nls_damp} in the next future.
\newline
We conclude the introduction by mentioning some existing results on related models.
The NLS equation with a linear damping term has been widely studied in the Euclidean space, also by considering more general local nonlinearities. In \cite{Ts84}, the formation of singularities in finite time is investigated. In \cite{Ts90}, the same author also studies global solutions, even in a general bounded domain with Dirichlet boundary conditions, see also \cite{OhTo09} for further results in this direction. The asymptotic behavior (decay of solutions to zero) is also studied in \cite{Inui}. In the presence of a forcing term, the asymptotic dynamics is richer and global attractors are studied in \cite{Gh88, WangX, GM}.
The NLS equation was also studied with a nonlinear damping term, see \cite{AS, AnCaSp15, Da14}.
\newline
This work is organized as follows: in Section \ref{sec:prelCN} we will set our definitions, notations and recall preliminary results. In Section \ref{sec:first}, we will show that using the Lyapunov functional \eqref{eq:lrough} to control the $H^1$-norm of the difference $e^{\ii \gamma(t)}\psi(t) - Q_{m(t)}$ without perturbing $Q_m$ implies \eqref{eq:result} only if the initial mass $M[\psi_0]$ is small enough. In Section \ref{sec:Adj}, we will refine the profile $Q_m$ incorporating the first-order effect of the damping, obtaining a family of perturbed solitary waves $Q_{m,\varepsilon}$. We will then control the difference $\psi(t) - Q_{m(t),\varepsilon}$ with a modified Lyapunov function and prove that this functional provides a satisfactory bound.

\section{Preliminaries} \label{sec:prelCN}
 For any given functional $S:H^1(\T) \to \R$, we adopt the notation $S'[v]$
for the Gateaux derivative of $S$, i.e.
\begin{equation*}
S'[v](\xi)=\lim_{h\to0}\frac{S[v+h\xi]-S[v]}{h}.
\end{equation*}
In the same way, we will denote by $S''[v]$ the second variation of $S$ computed in $v$ 
\begin{equation*}
 (S''[v]\xi,\xi)=\frac{d^2}{dh^2} S[v+h\xi]_{|h=0}.
\end{equation*}

\subsection{Periodic spaces}\label{ss:per}
We define the following spaces
	\begin{equation*}
		L^2(\T) = \{u \in L^2_{loc}(\R); \, \forall x\in \R,\, u(x+ 2\pi) = u(x) \},
	\end{equation*} 
 \begin{equation*}
		H^1(\T) = \{u \in H^1_{loc}(\R); \, \forall x\in \R,\, u(x+ 2\pi) = u(x) \},
	\end{equation*} 
where in the second definition we have slightly abused notations and the precise definition of Lebesgue spaces on the periodic torus can be found for example in \cite[p. 238]{Fo99}. \newline
The inner product in $L^2(\T)$ is denoted by
\begin{equation*} 
	 (f,g) = Re\int_0^{2\pi} f \bar{g} dx.
\end{equation*}
With the same remark as above, we also define 
\begin{equation}\label{eq:hap}
	\mathcal{A} = \left\{ u \in L^2(\T) ; \, u(x + \pi) = -u(x) \right\}, \quad \mathcal{S} = \left\{ u \in L^2(\T) ; \, u(x + \pi) = u(x) \right\}.
\end{equation} 
We refer to the set $\mathcal A$ as the set of half-anti-periodic functions, whereas $\mathcal S$ is the set of periodic functions with period $\pi$. 
Thus $L^2(\T)$ can be decomposed as the direct sum 
	\begin{equation*}
		L^2(\T) = \mathcal{S} \oplus \mathcal{A}.
	\end{equation*}
	Indeed, any $f\in L^2(\T)$ can be written as
	\begin{displaymath}
		f(x) = \frac{1}{2} (f(x) + f(x + \pi)) + \frac{1}{2} (f(x) - f(x + \pi)),
	\end{displaymath} 
	where
	\begin{equation*}
	 f(x) + f(x + \pi) \in \mathcal S, \quad f(x) - f(x + \pi) \in \mathcal A.
	\end{equation*}
	We further decompose $\mathcal{A}$ into the following subsets
	\begin{equation*}
		\mathcal{A} = \mathcal{A}^+ \oplus \mathcal{A}^-,
	\end{equation*}
	where
$\mathcal A^+$ (resp. $\mathcal A^-$) denotes the set of even (resp. odd) functions in $\mathcal A$, i.e.
	\begin{equation}\label{eq:a_p}
		\mathcal{A}^\pm = \left\{ u \in \mathcal{A} ; \, u(-x) = \pm u(x) \right\}.
	\end{equation}	
	Indeed, for any $f \in \mathcal{A}$, we can write
	\begin{equation*}
		f(x) = \frac{1}{2} (f(x) + f(-x)) + \frac{1}{2} (f(x) - f(-x)),
	\end{equation*}
	where 
	\begin{equation*}
	 f(x) + f(-x) \in \mathcal A^+, \quad f(x) - f(-x) \in \mathcal A^-.
	\end{equation*}
	Thus, we obtain the decomposition 
	\begin{equation*}
		L^2(\T) = \mathcal{S} \oplus \mathcal{A}^+ \oplus \mathcal{A}^-,
	\end{equation*}
	and each of these subspaces is invariant under equation \eqref{eq:Nls_damp}, since 
	\begin{equation*}
		\psi \in \mathcal{S}\ (\mbox{resp. } \mathcal{A}^\pm) \rightarrow \partial_{xx} \psi + |\psi|^2\psi +\ii \varepsilon \psi \in \mathcal{S} \ (\mbox{resp. } \mathcal{A}^\pm).
	\end{equation*} 
	In particular, if $\psi_0 \in H^1(\T) \cap \mathcal{A}^+$, then the corresponding solution is also contained in the same space, namely $\psi \in C([0,\infty), H^1(\T) \cap \mathcal{A}^+)$. 
 Throughout this work, we will use the following notation
 \begin{equation}\label{eq:defA}
 \mathbf A = H^1(\T) \cap \mathcal{A}^+.
 \end{equation}

\subsection{Jacobi elliptic functions}\label{ss:jac}
We introduce the Jacobi elliptic functions. The reader might refer to treatises on elliptic functions (e.g., \cite{ByFr71, La13}) for more details. 
\newline
Given $k\in(0, 1)$, we define the incomplete elliptic integral of the first kind as
\begin{equation*}
	F(\phi,k) = \int_0^\phi \frac{d\theta}{\sqrt{1 - k^2\sin^2 (\theta)}}.
\end{equation*}
The Jacobi elliptic function of cnoidal, snoidal and dnoidal types are defined through the inverse of $F(\cdot,k)$ so that given $x\in \R$ and $k \in [0,1]$, by denoting $\phi = F^{-1} (x,k)$, we define
\begin{equation*} 
	cn(x,k) = \cos(\phi), \quad sn(x,k) = \sin(\phi), \quad dn(x,k) = \sqrt{1 - k^2 \sin^2(\phi)}.
\end{equation*}
The complete elliptic integral of the first kind is defined by
\begin{equation*}
	K(k) = F\left( \frac{\pi}{2}, k \right) = \int_0^\frac{\pi}{2} \frac{d\theta}{ \sqrt{ 1 - k^2 \sin^2 \theta}},
\end{equation*}
and it satisfies
\begin{equation}\label{eq:K_as}
 \lim_{k\to 0} K(k) = \frac{\pi}{2}, \quad \lim_{k \to 1} K(k) = +\infty.
\end{equation}
The functions $cn$ and $sn$ are $4 K(k)$-periodic, while $dn$ is $2K(k)$-periodic. 
This implies that, by scaling, the functions $cn\left(\frac{2K(k)}{\pi}x;k\right)$ and 
$sn\left(\frac{2K(k)}{\pi}x;k\right)$ are $2\pi-$periodic, while 
$dn\left(\frac{2K(k)}{\pi}x;k\right)$ is $\pi-$periodic and in particular
\begin{equation*}
 dn\left(\frac{2K(k)}{\pi}\cdot,k\right)\in\mathcal S,\quad
 sn\left(\frac{2K(k)}{\pi}\cdot,k\right)\in\mathcal A^-,\quad
 cn\left(\frac{2K(k)}{\pi}\cdot,k\right)\in\mathcal A^+.
\end{equation*}
It is straightforward to check by direct computations that, after a suitable scaling, the functions $sn, cn$ and $dn$ satisfy equation \eqref{eq:sol_wave} with a suitable $\omega\in\R$. Since we mainly focus on the space $\A$, in what follows we consider solitary waves of cnoidal type. The profile $cn(x,k)$ satisfies the equation
\begin{equation*}
 \partial_{xx} Q + (1 - 2k^2) Q + 2k^2 |Q|^2Q = 0,
\end{equation*}
and thus the scaled function
\begin{equation}\label{eq:Qm}
	Q_k(x) = \sqrt{2} k \frac{2 K(k)}{\pi} cn \left(\frac{2 K(k)}{\pi} x ,k \right),
\end{equation}	
satisfies \eqref{eq:sol_wave}, with $\omega=\omega(k)$ given by
\begin{equation}\label{eq:omegaCN}
 \omega(k) = \frac{4 K^2(k) (2k^2 - 1)	}{\pi^2}.
\end{equation}
A direct computation shows that it is also possible to express the total mass of $Q_k$ as a function of $k$, namely
\begin{equation*}
 M[Q_k]= \frac{8}{\pi} K(k) \left( \int_0^\frac{\pi}{2} \sqrt{1 - k^2 \sin^2 \theta} d\theta - (1 - k^2) K(k) \right).
\end{equation*}
By exploiting the properties of Jacobi elliptic functions, it may be checked that the function
\begin{equation}\label{eq:k}
 k\mapsto m(k) = \frac{8}{\pi} k^2 K(k) \int_0^\frac{\pi}{2} \frac{\cos^2(\theta)}{\sqrt{1 - k^2\sin(\theta)}}d\theta
\end{equation}
is bijective and strictly increasing, hence invertible, see \cite{GuCoTs17} and also the Proposition below. 
For this reason, when there is no source of ambiguity, we adopt also the notation $Q_m$ to denote $Q_{k(m)}$ as defined in \eqref{eq:Qm}. Analogously, we write $\omega(m)$ to denote $\omega(k(m))$, with $\omega$ defined in \eqref{eq:omegaCN}. The next Proposition collects some properties of cnoidal waves that will be useful in our analysis, for a proof of these results, we refer to \cite[Proposition $3.4$]{GuCoTs17}.

\begin{prop} \label{thm:cnoidal}
 For any $m>0$, the function $Q_m$ defined in \eqref{eq:Qm} is the unique minimizer of the problem	
 \begin{equation}\label{eq:min_prob} 
 \mu = 	\inf \left\{ E(u): \, u \in H^1(\T) \cap \mathcal{A}, M[u] = m \right\},
\end{equation}
up to translations and phase shifts, where $k = k(m)\in(0,1)$ is determined by inverting the bijective and strictly increasing function in \eqref{eq:k}.
\end{prop}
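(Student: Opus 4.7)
The plan is to follow the direct method of the calculus of variations and then classify the critical points via an ODE analysis. First I take a minimizing sequence $(u_n)\subset H^1(\T)\cap\mathcal A$ for \eqref{eq:min_prob}. Since the cubic nonlinearity is $L^2$-subcritical in one dimension, the Gagliardo--Nirenberg inequality combined with the mass constraint bounds $E[u_n]$ from below by $\tfrac12\|\partial_x u_n\|_{L^2}^2 - C(m)\|\partial_x u_n\|_{L^2}$, so $(u_n)$ is uniformly bounded in $H^1(\T)$. Extracting a subsequence, $u_n\rightharpoonup u_*$ weakly in $H^1(\T)$ and strongly in every $L^p(\T)$ by Rellich compactness on the torus; the mass constraint and the quartic term pass to the strong limit, the Dirichlet integral is weakly lower semicontinuous, and $\mathcal A$ is weakly $L^2$-closed, so $u_*\in H^1(\T)\cap\mathcal A$ attains the infimum $\mu$. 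A Lagrange multiplier argument then shows that $u_*$ solves \eqref{eq:sol_wave} for some $\omega\in\R$.

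Next I reduce the analysis to real-valued profiles. On any open interval where $u_*\neq 0$ write $u_*(x)=r(x)e^{\ii\theta(x)}$; then $|u_*'|^2=(r')^2+r^2(\theta')^2$, whereas $M[u_*]$ and the quartic term in $E[u_*]$ depend on $r$ alone. If $\theta$ were non-constant on some such interval, one could replace $u_*$ by a profile with the same modulus but locally constant phase, preserving the mass and the half-anti-periodicity while strictly decreasing the energy, contradicting minimality. Hence $\theta$ is constant on every interval between consecutive zeros of $u_*$, and the $C^1$-matching of $u_*$ (from the $H^2$-regularity of solutions of \eqref{eq:sol_wave}) at a simple zero $x_0$ forces the constant phase on the two sides of $x_0$ to differ by a multiple of $\pi$. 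So $u_*=e^{\ii\alpha}\tilde u_*$ for some $\alpha\in[0,2\pi)$ and a real-valued $\tilde u_*\in H^1(\T)\cap\mathcal A$.

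It remains to classify the real half-anti-periodic solutions of \eqref{eq:sol_wave}. The quadrature $(\tilde u_*')^2=\tfrac12\tilde u_*^4-\omega \tilde u_*^2+C$ identifies the phase-plane trajectory as a closed orbit, and the forced sign changes pin it to a cnoidal-type orbit through the origin, so $\tilde u_*(x)=A\,cn(Bx,k)$ with $B\pi=(2n+1)2K(k)$ for some integer $n\ge 0$. For $n=0$ one recovers $Q_m$ as defined in \eqref{eq:Qm}, with $k=k(m)$ obtained by inverting \eqref{eq:k}. The hard part will be to rule out the higher-harmonic modes $n\ge 1$: using the scaling $u^{(n)}(x)=(2n+1)u^{(0)}((2n+1)x)$, at fixed $k$ one obtains $M_n(k)=(2n+1)^2 M_0(k)$ and $E_n(k)=(2n+1)^4 E_0(k)$, so fixing the mass $m$ and letting $k_n$ be defined by $M_0(k_n)=m/(2n+1)^2$ reduces the comparison $E[Q_m]<E_n(k_n)$ to a study of the energy--mass curve on the fundamental branch. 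In the small-mass regime, the approximation $cn(\cdot,k)\approx\cos$ yields $E_n(k_n)\sim (2n+1)^2 m-\tfrac{3m^2}{4\pi}$, which is strictly increasing in $n$; for arbitrary $m$ one exploits the monotonicity of $m\mapsto E[Q_m]$ along this branch. Uniqueness up to translations and phase shifts is then immediate from the bijectivity of $k\mapsto m(k)$ in \eqref{eq:k}.
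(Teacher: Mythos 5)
The paper does not prove this proposition at all: it is imported verbatim from \cite[Proposition 3.4]{GuCoTs17}, so there is no internal proof to measure you against. Judged on its own terms, your outline follows the natural route (direct method plus ODE classification), and the existence part, the Lagrange multiplier step, and the scaling identities $M_n=(2n+1)^2M_0$, $E_n=(2n+1)^4E_0$ are all correct. But there are two genuine gaps. The more serious one is the reduction to real-valued profiles: your phase-flattening surgery only makes sense ``between consecutive zeros of $u_*$'', and a complex half-anti-periodic function need not vanish. The plane wave $a e^{\ii x}$ lies in $\mathcal A$, has mass $m=\pi a^2$ and energy $m-\tfrac{m^2}{2\pi}$, and solves \eqref{eq:sol_wave}; more generally the zero-free ``rotating'' solutions $r(x)e^{\ii\theta(x)}$ with conserved current $r^2\theta'=c\neq0$ and $\theta(x+\pi)=\theta(x)+\pi$ are admissible competitors. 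For such $u_*$ your replacement by a locally constant phase produces a $\pi$-periodic function, which leaves $\mathcal A$, so minimality is not contradicted and these branches are not ruled out. You must either prove that a minimizer vanishes somewhere (equivalently that $c=0$) or compare energies with the rotating/plane-wave branches directly; this is precisely where the cited proof has to work hardest.

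The second gap is the exclusion of the higher harmonics $n\ge1$, which you correctly flag as the hard part but do not close. The expansion $E_n(k_n)\sim(2n+1)^2m-\tfrac{3m^2}{4\pi}$ only settles the small-mass regime, and ``monotonicity of $m\mapsto E[Q_m]$'' is not enough: what is needed is the inequality $E_0(m)<(2n+1)^4E_0\bigl(m/(2n+1)^2\bigr)$ for all $m>0$ and $n\ge1$, which does not follow from monotonicity of $E_0$ alone (one would need, say, monotonicity of $E_0(m)/m$ together with control of the sign of $E_0$, or an explicit comparison via elliptic integrals). As written, the argument establishes uniqueness only in a small-mass regime, whereas the proposition is claimed for every $m>0$. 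I would either supply these two missing comparisons or, as the paper does, invoke \cite{GuCoTs17} for the full statement.
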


Let us also remark that by using \eqref{eq:K_as} and \eqref{eq:omegaCN}, we have
\begin{equation*}
 \lim_{k \to 0} \omega(k) =\lim_{m\to0}\omega(k(m))= -1.
\end{equation*}

\begin{remark}
Since the profile $Q_m$ is the global unique minimizer (up to invariances) of the convex functional $E$, by combining the classical argument by Cazenave and Lions \cite{CaLi82} and Proposition \ref{thm:cnoidal}, it is possible to show that $Q_m$ is orbitally stable in $H^1(\T) \cap \mathcal{A}$. This was already observed in \cite{GuCoTs17}. The orbital stability was proved also in \cite{GaHa07,Pa07} by an adaptation of the approach introduced in \cite{GrShSt87}.
\end{remark}

\subsection{Linearized Operator}\label{sub:spec}
In this subsection, we outline some preliminary results about the linearized operator around $Q_k$ and we refer to \cite[Section 4]{GuCoTs17} for details. Decomposing the solution as $ \psi(t,x) = e^{\ii \omega t}( Q_k + \xi)$ leads to the equation for $\xi$
\begin{equation*}
 \ii \partial_t \xi = L(\xi) - N(\xi) - \ii D(\xi)
\end{equation*}
where 
\begin{equation}\label{eq:lin_oper}
 L(\xi) = -\partial_{xx} \xi + \omega(k) \xi - Q_k^2\xi - 2Q^2_k Re(\xi)
 \end{equation}
 is the linearization around $Q_k$, the nonlinear terms are
 \begin{equation*}
 N(\xi) = |\xi|^2 Q_k + 2Re(\xi) Q_k \xi + |\xi|^2 \xi,
 \end{equation*}
 and the damping term is given by
 \begin{equation*}
 D(\xi) = \varepsilon(Q_k + \xi).
 \end{equation*}
We separate the real and imaginary parts of the linearization term as 
\begin{equation}\label{eq:lin_oper2}
 L(\xi) = L_{+}^{(k)} Re(\xi) + \ii L_{-}^{(k)} Im(\xi)
\end{equation}
where we define $ L_{\pm}^{(k)}:H^1(\T) \to H^{-1}(\T)$ as 
\begin{equation}\label{eq:lin_opCN}
	 \begin{split}
			&L_{+}^{(k)} = - \partial_{xx} + \omega(k) - 3 Q_k^2 , \\
			&L_-^{(k)} = - \partial_{xx} + \omega(k) - Q_k^2.
 	\end{split}
 \end{equation}
By \eqref{eq:Qm}, \eqref{eq:omegaCN} and \eqref{eq:lin_opCN}, we get 
	\begin{equation*}
		\begin{split}
			&L_+^{(k)} = - \partial_{xx} - \frac{4K^2(k)}{\pi^2} \left[(1 - 2k^2) + 6k^2 cn^2 ( 2 K(k)x/\pi,k) \right], \\
			&L_-^{(k)} = - \partial_{xx} - \frac{4K^2(k)}{\pi^2} \left[(1 - 2k^2) + 2k^2 cn^2 ( 2 K(k)x/\pi,k) \right].
		\end{split}
	\end{equation*}
 The operators $L_\pm^{(k)}$ enter in the framework of Schr\"odinger operators with periodic potentials whose spectral properties have been extensively studied (see e.g. \cite{Ea73}). In particular, we recall that given an operator $L = -\partial_{xx} + V$ with $2\pi$-periodic potential $V$, the eigenvalues of $L$ on $L^2(\T)$ satisfy 
 \begin{equation}\label{eq:eigOrder}
 \lambda_0 < \lambda_1 \leq \lambda_2 < \lambda_3 \leq \lambda_4< ... .
 \end{equation}
 with corresponding eigenfunctions $\varphi_n$ such that $\varphi_0$ has no zeros, $\varphi_{2m + 1}$ and $\varphi_{2m + 2}$ have exactly $2m + 2$ zeros in $[0,2\pi)$, (\cite[p. 39]{Ea73}). We now analyze the spectrum of $L_{\pm}$ more in detail. 
 
{\bf Spectrum of $L_-$:} Let $sp(L_-^{(k)}) = \{ \lambda_n(k) , \, n \in \N \} \subset \R$, with $\lambda_{j} \leq \lambda_{j+1}$. The first three eigenvalues are given by
	\begin{equation}\label{eq:L-sp}
		\lambda_0(k) = (k^2 - 1) \frac{4 K^2(k)}{\pi^2}, \quad \lambda_1(k) = 0, \quad \lambda_2(k) = k^2 \frac{4 K^2(k)}{\pi^2}.
	\end{equation}
The fourth eigenvalue satisfies the inequality
 \begin{equation}\label{eq:lambda3}
 \inf_{k\in[0,1)} \lambda_3 (k) > 0.
 \end{equation}
Indeed, from \eqref{eq:eigOrder} we have $\lambda_3(k) >\lambda_2(k) > 0$ for any $k \in (0,1)$ and moreover $L_-^{(k)} \to L_-^{(0)} = - \partial_{xx} + 1$ as $k \to 0$ which implies $\lambda_3(k) \to 1$. The corresponding eigenfunctions are given by
\begin{equation*}
\begin{aligned}
 &\phi_0=dn\left(\frac{2K(k)}{\pi}\cdot,k\right)\in\mathcal S, \quad
 \phi_1=cn\left(\frac{2K(k)}{\pi}\cdot,k\right)\in\mathcal A^+,\\
 &\phi_2=sn\left(\frac{2K(k)}{\pi}\cdot,k\right)\in\mathcal A^-.
\end{aligned}
\end{equation*}
	Since $\lambda_0(k) < 0$, the operator $L_-^{(k)}$ is not positive definite. Indeed, the cnoidal wave \eqref{eq:Qm} is not a global minimizer of problem \eqref{eq:min_prob} in the whole space $H^1(\T)$. The global minimizer is given by the dnoidal profile, see \cite[Proposition $3.2$]{GuCoTs17} for instance. 
 Nonetheless, the restriction of the operator $L_-^{(k)}$ to the subspace $\mathcal{A}$ is positive semi-definite, since the eigenfunction $\phi_0\notin \mathcal{A}$. 
 So, for any $f \in \mathcal{A} \cap H^1(\T)$ orthogonal to $\phi_1$, we have
	\begin{equation*}
		(L_- f,f) \geq \lambda_2(k) \| f\|_{L^2}^2.
	\end{equation*} 
However, $\lambda_2(k) \to 0$ as $k \to 0$, or equivalently $\lambda_2(m) \to 0$ as $m \to 0$, see \eqref{eq:k}. To deal with this degeneracy, we restrict the analysis to $\A$ and obtain that $L_-$ is uniformly coercive, namely
	\begin{equation*}
		(L_-^{(k)} f,f) \geq \lambda_3(k) \| f\|_{L^2}^2 \geq \inf_{k\in [0,1)} \lambda_3(k) \| f\|_{L^2}^2.
	\end{equation*}

	{\bf Spectrum of $L_+$:}
	Let $sp(L_+^{(k)}) = \{ \mu_n(k) , \, n \in \N \} \subset \R$, with $\mu_{j} \leq \mu_{j+1}$. The first five eigenvalues are given by
	\begin{equation}\label{eq:L+sp}
		\begin{split}
			&	\mu_0(k) = (c_-(k) - 3k^2) \frac{4 K^2(k)}{\pi^2}, \quad \mu_1(k) = -3k^2 \frac{4 K^2(k)}{\pi^2}, \quad \mu_2(k)= 0, \\
			& \mu_3(k)= (3 - 3k^2) \frac{4 K^2(k)}{\pi^2}, \quad \mu_4(k) = (c_+(k) - 3k^2) \frac{4 K^2(k)}{\pi^2},
		\end{split}
	\end{equation}
	where 
	\begin{equation*}
		 c_\pm(k) = k^2 + 1 \pm 2 \sqrt{k^4 - k^2 + 1}.
	\end{equation*}
The corresponding eigenfunctions are such that 
	\begin{equation}\label{eq:L+sp1}
 \begin{aligned}
 &u_0, u_3, u_4 \in \mathcal{S}, \\
 &u_1 = \partial_x sn( 2 K(k)x/\pi,k) \in \mathcal{A}^+, \quad u_2 = \partial_x cn( 2 K(k)x/\pi,k) \in \mathcal{A}^-. 
 \end{aligned} 
	\end{equation}
 This implies that when restricting $L_+^{(k)}$ to the subspace $\mathcal A$, the first three eigenvalues are $\mu_1(k)<0$, $\mu_2(k)=0$, $\mu_5(k)>0$ and there exists a constant $c>0$ such that $\mu_5(k) \geq c$ for any $k \in [0,1]$. Consequently, for every $f \in \mathbf A$ orthogonal to $u_1$, we have the uniform coercivity
	\begin{equation*}
		(L_+ f,f) \geq \mu_5(k) \| f\|_{L^2}^2 \geq c\| f\|_{L^2}^2.
	\end{equation*}

\section{Lyapunov control around the modulated solitary wave}\label{sec:first}

In this section, we start our analysis of the linearized dynamics around the periodic wave $e^{i\omega t}Q_m$, with $Q_m$, $\omega=\omega(m)$ given by \eqref{eq:Qm}, \eqref{eq:omegaCN}, respectively.
As discussed in Proposition \ref{thm:cnoidal} and in the previous section, the parameter $m$ denotes the total mass of the periodic wave. For this reason, we let $m$ depend on time, to follow the dissipative dynamics \eqref{eq:Nls_damp}, according to \eqref{eq:mass}. \newline
To obtain a control of the perturbation around the periodic wave, we first need to infer a suitable coercivity property of the linearized operator $L$ defined in \eqref{eq:lin_oper}, \eqref{eq:lin_oper2} and \eqref{eq:lin_opCN}. 
One of the main points in our analysis relies on the fact that the coercivity constant needs to be uniform in $m\in[0, \infty)$. 
As already outlined in Subsection \ref{sub:spec}, this condition compels us to consider only even, half-anti-periodic perturbations. The coercivity of the linearized operator then yields an $H^1$-control of the perturbation, provided we infer suitable bounds on the Lyapunov functional. This is achieved in Proposition \ref{thm:L_weak}. However, we will see that the bound proved there is not sufficient to show the stability of the family of cnoidal waves and it will lead us to improve our perturbative analysis in the next section. \newline
Let us now consider $\psi_0 \in \A$, where $\A$ is defined in \eqref{eq:defA}, such that $M[\psi_0]=m_0$, let $\psi\in C([0, \infty);\A)$ be the corresponding solution to \eqref{eq:Nls_damp}.
We define the function
\begin{equation}\label{eq:mass_diss}
m(t) = M[\psi(t)]=m_0e^{-2\eps t},
\end{equation}
where the last identity follows from \eqref{eq:mass}.
By taking into account the phase shift invariance, for any $t\geq0$, we define the distance between $\psi$ and the family $\mathcal F$ of cnoidal waves defined in \eqref{eq:cn_fam}, as
\begin{equation}\label{eq:defdis}
d_{\mathcal F}(\psi(t))=\inf_{\gamma\in[0, 2\pi]}\|e^{-\ii \gamma}\psi(t)-Q_{m(t)}\|_{L^2}.
\end{equation}
Let us recall that the parameter $m(t)$ is chosen in such a way that $\|\psi(t)\|_{L^2}=\|Q_{m(t)}\|_{L^2}$. Moreover, in \eqref{eq:defdis} we do not take into account space translations, as our analysis is restricted to $\A$. 
It is straightforward to see that for any $t\geq0$ the infimum in \eqref{eq:defdis} is attained, namely there exists $\gamma \in C^1(\R,\R)$ such that
\begin{equation}\label{eq:gamma}
 d_{\mathcal F}(\psi(t))=\|e^{-\ii \gamma(t)}\psi(t)-Q_{m(t)}\|_{L^2}.
\end{equation}
The perturbation $\xi\in C([0, \infty);\A)$ around the periodic wave is defined by
\begin{equation}\label{eq:decomCN}
 \xi(t, x)	= e^{-\ii \gamma(t)}\psi(t,x)- Q_{m(t)}(x),
\end{equation}
or equivalently, we may write
\begin{equation}\label{eq:decopsi}
 \psi(t, x)=e^{\ii\gamma(t)}\left(Q_{m(t)}(x)+\xi(t, x)\right).
\end{equation}
Definitions \eqref{eq:defdis}, \eqref{eq:gamma} and \eqref{eq:decomCN} also imply that
\begin{equation*}
d_{\mathcal F}(\psi(t))=\|\xi(t)\|_{L^2}.
\end{equation*}
The following properties for $\xi$ come from the previous definitions.

\begin{prop}\label{prop:ort}
 Let $\psi_0 \in \A$, $\psi\in C([0,\infty), \A) $ be its corresponding evolution, $m$ and $\gamma$ be defined as in \eqref{eq:mass} and \eqref{eq:gamma}, respectively. Then for any $t\geq0$, we have 
 \begin{equation}\label{eq:ortho_con}
		(\xi(t, \cdot),\ii Q_{m(t)}) = 0, 
 \end{equation}
 \begin{equation}\label{eq:ortho_con_2}
 (\xi(t, \cdot), Q_{m(t)}) = - M[\xi(t)].
 \end{equation}
\end{prop}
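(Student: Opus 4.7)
The plan is to derive both identities directly from the definitions of $\gamma(t)$ and $m(t)$, without invoking the evolution equation at all; both are essentially static facts about the decomposition \eqref{eq:decomCN}.

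For \eqref{eq:ortho_con}, the key point is that $\gamma(t)$ is by construction a minimizer of the smooth function
\begin{equation*}
 \gamma\mapsto F(\gamma):=\|e^{-\ii\gamma}\psi(t)-Q_{m(t)}\|_{L^2}^2
 =\|\psi(t)\|_{L^2}^2+\|Q_{m(t)}\|_{L^2}^2-2\,\mathrm{Re}\!\int_0^{2\pi}\!e^{-\ii\gamma}\psi(t)\overline{Q_{m(t)}}\,dx.
\end{equation*}
I would differentiate in $\gamma$, use $\frac{d}{d\gamma}e^{-\ii\gamma}=-\ii e^{-\ii\gamma}$, and impose $F'(\gamma(t))=0$; since $Q_{m(t)}$ is real-valued this yields $\mathrm{Im}\!\int e^{-\ii\gamma(t)}\psi(t)\overline{Q_{m(t)}}\,dx=0$. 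Recognising this expression as $(e^{-\ii\gamma(t)}\psi(t),\ii Q_{m(t)})$ (using $\overline{\ii Q}=-\ii Q$ since $Q$ is real) and noting that $(Q_{m(t)},\ii Q_{m(t)})=0$ trivially, I subtract $Q_{m(t)}$ to conclude $(\xi(t),\ii Q_{m(t)})=0$.

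For \eqref{eq:ortho_con_2}, I would simply expand. Starting from $\xi=e^{-\ii\gamma}\psi-Q_{m(t)}$,
\begin{equation*}
 (\xi(t),Q_{m(t)})=\mathrm{Re}\!\int_0^{2\pi}\!e^{-\ii\gamma(t)}\psi(t)\,\overline{Q_{m(t)}}\,dx-\|Q_{m(t)}\|_{L^2}^2,
\end{equation*}
while expanding $\|\xi(t)\|_{L^2}^2=\|e^{-\ii\gamma(t)}\psi(t)-Q_{m(t)}\|_{L^2}^2$ gives
\begin{equation*}
 2M[\xi(t)]=\|\psi(t)\|_{L^2}^2+\|Q_{m(t)}\|_{L^2}^2-2\,\mathrm{Re}\!\int_0^{2\pi}\!e^{-\ii\gamma(t)}\psi(t)\,\overline{Q_{m(t)}}\,dx.
\end{equation*}
Solving the second line for the real integral and substituting into the first, together with the crucial identity $\|\psi(t)\|_{L^2}^2=\|Q_{m(t)}\|_{L^2}^2=2m(t)$ (which holds exactly because $m(t)$ was chosen via \eqref{eq:mass_diss} so that $M[Q_{m(t)}]=M[\psi(t)]$), makes the $2m(t)$ terms cancel and leaves $(\xi(t),Q_{m(t)})=-M[\xi(t)]$.

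There is no real obstacle here: both identities are immediate consequences of (i) the variational characterisation of $\gamma(t)$ and (ii) the mass-matching choice \eqref{eq:mass_diss}. The only subtlety worth flagging is that differentiating $F$ in $\gamma$ requires $\gamma(t)$ to be a stationary point of a smooth function, which is provided by the attainment of the infimum in \eqref{eq:gamma} and the assumed $C^1$ regularity of $\gamma$; one could, if desired, justify this regularity via the implicit function theorem applied to $F'(\gamma)=0$ near $t$, using that $\psi(t)$ is close enough to the orbit of $Q_{m(t)}$ so that $F''(\gamma(t))>0$.
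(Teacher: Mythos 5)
Your proposal is correct and follows essentially the same route as the paper: \eqref{eq:ortho_con} from the first-order condition $\frac{d}{d\gamma}\|e^{-\ii\gamma}\psi-Q_{m}\|_{L^2}^2=0$ at the minimizer together with $Q_m$ being real-valued, and \eqref{eq:ortho_con_2} from expanding the $L^2$-norm and using the mass matching $\|\psi(t)\|_{L^2}=\|Q_{m(t)}\|_{L^2}$. The only cosmetic difference is that the paper expands $\|\psi\|_{L^2}^2=\|Q_m+\xi\|_{L^2}^2$ directly rather than solving for the cross term, but the content is identical.
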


\begin{proof}
 Condition \eqref{eq:ortho_con_2} follows from $\| Q_m\|_{L^2} = \| \psi \|_{L^2}$ and decomposition \eqref{eq:decomCN} which implies that 
	\begin{equation*}
		\| \psi\|_{L^2}^2 = \| \xi\|_{L^2}^2 + \| Q_m\|_{L^2}^2 + 2(\xi,Q_m) = \| Q_m\|_{L^2}^2.
		\end{equation*}
		Moreover, \eqref{eq:gamma} and \eqref{eq:decopsi} imply that for any $t > 0$ we have
		\begin{equation*}
		\begin{split}
		 0 = - \frac{1}{2}\frac{d}{d\gamma}\int | e^{-\ii \gamma}\psi - Q_m|^2 dx = Re\int \ii e^{-i\gamma} \psi Q_m dx = Re\int \ii (Q_m + \xi)Q_m dx = (\xi, \ii Q_m)
		 \end{split}
	\end{equation*}
	which implies \eqref{eq:ortho_con}.
\end{proof}	

By using the Cauchy-Schwarz inequality, \eqref{eq:ortho_con_2} yields the following bound on the perturbation,
\begin{equation}\label{eq:bnd1}
\| \xi(t)\|_{L^2} \leq 2\| Q_{m(t)}\|_{L^2}
=2\|\psi_0\|_{L^2}e^{-\eps t},
\end{equation}
where the last identity comes from \eqref{eq:mass_diss}.
Consequently, the $L^2$-norm of $\xi$ remains small for all times, provided that the initial mass is sufficiently small.
Our main result amounts to improving \eqref{eq:bnd1} to the following bound
\begin{equation*}
\| \xi(t)\|_{H^1} \lesssim \varepsilon e^{-\varepsilon t},
\end{equation*}
which implies a uniform control on the smallness of $\xi$, regardless of the size of the initial datum $\psi_0$.
This is achieved by exploiting a Lyapunov functional approach, as developed by Weinstein \cite{We86}. Taking into account that $Q_m$ is a minimizer of the problem \eqref{eq:min_prob}, the natural choice for our Lyapunov functional is given by 
	\begin{equation*}
		\mathcal{L}[\psi(t)] = E[\psi(t)] - E[Q_{m(t)}].
	\end{equation*}
 By recalling that $\psi(t)$ and $Q_{m(t)}$ have the same mass, we also write equivalently
 \begin{equation}\label{eq:Lyap}
		\mathcal{L}[\psi(t)] = S_{m(t)}[\psi(t)] - S_{m(t)}[Q_{m(t)}],
	\end{equation}
 where $S_m$ is the action functional, defined by
	\begin{equation}\label{eq:actionS}
		S_{m}[\psi] = E[\psi] + \omega(m) M[\psi].
	\end{equation}
	Let us plug \eqref{eq:decomCN} into \eqref{eq:Lyap} and use \eqref{eq:sol_wave}, \eqref{eq:ortho_con_2} to obtain
	\begin{equation}\label{eq:L_Lyapunov}
		\begin{split}
			\mathcal{L}[\psi] & = \int_0^{2\pi} \frac{1}{2} |\partial_x \xi|^2 - [\partial_{xx} Q_m + Q_m^3] \xi_R dx - \int_0^{2\pi} \frac{3}{2}Q_m^2 \xi_R^2 + \frac{1}{2} Q_m^2 \xi_I^2dx \\
			& - \int_0^{2\pi} Q_m \xi_R^3 + Q_m \xi_I^2 \xi_R + \frac{1}{4} |\xi|^4 dx \\
			& = \frac{1}{2} \left[(L_+\xi_R,\xi_R) + (L_- \xi_I, \xi_I) \right] - \int_0^{2\pi} Q_m \xi_R^3 + Q_m \xi_I^2 \xi_R + \frac{1}{4} |\xi|^4 dx.
		\end{split} 
	\end{equation}
Here $\xi_R = Re(\xi)$, $\xi_I = Im(\xi)$, $L_+ = L_+^{(k)}, L_- = L_-^{(k)}$ are the real and imaginary part of the linearized operators around $Q_{m(t)}$ introduced in \eqref{eq:lin_opCN} and $k$ is defined from the inverse of formula \eqref{eq:k}.	
The quadratic part appearing on the right-hand side of \eqref{eq:L_Lyapunov} may be identified as
\begin{equation}\label{eq:quad}
(S_m''[Q_m]\xi,\xi) =	(L_+ \xi_R, \xi_R) + (L_- \xi_I, \xi_I),
\end{equation} 
where $S_m''[Q_m]$ is the second variation of the action $S_{m}$ defined in \eqref{eq:actionS},
\begin{equation}\label{eq:secvar}
 S_m''[Q_m] \xi = - \partial_{xx} \xi + \omega \xi - Q_m^2\xi -2 Re(Q_m \bar{\xi})Q_m. 
\end{equation}
Our next goal is to show the coercivity of the functional $\mathcal L[\psi]$. The next proposition states that its quadratic part is coercive.

\begin{prop} \label{thm:pos}
 There exists a constant $C>0$ such that for any $m \in [0, m_0]$ and
	\begin{equation}\label{eq:X_sp}
	\xi \in	\mathcal{X}= \left\{ u \in \A: \, (u,Q_m) = (u,\ii Q_m) = 0 \right\}
	\end{equation}
	we have
		\begin{equation} \label{eq:H1_pos}
			(L_+ \xi_R, \xi_R) + (L_- \xi_I, \xi_I) \geq C\| \xi\|_{H^1}^2.
		\end{equation} 
\end{prop}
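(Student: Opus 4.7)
The plan is to split the quadratic form \eqref{eq:quad} into its real and imaginary parts and prove $L^2$-coercivity for each, then upgrade to the $H^1$ bound. Since $Q_m$ is real-valued, the two orthogonality conditions defining $\mathcal{X}$ decouple into $(\xi_R,Q_m)=0$ and $(\xi_I,Q_m)=0$. For the imaginary part, the spectral description of $L_-^{(k)}$ on $\mathbf{A}$ recalled in Subsection \ref{sub:spec} is immediately favourable: the only non-positive eigenvalue is $\lambda_1(k)=0$ with eigenfunction $\phi_1=cn(\cdot,k)$, which is collinear with $Q_m$ by \eqref{eq:Qm}, so the constraint $(\xi_I,Q_m)=0$ kills the kernel and the uniform spectral gap \eqref{eq:lambda3} directly yields $(L_-\xi_I,\xi_I)\geq\lambda_3(k)\|\xi_I\|_{L^2}^2\geq c_-\|\xi_I\|_{L^2}^2$.

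For the real part, $L_+^{(k)}|_{\mathbf{A}}$ has a negative eigenvalue $\mu_1(k)$ with eigenfunction $u_1=\partial_x sn\in\mathcal{A}^+$ and no zero eigenvalue, the next one being $\mu_5(k)\geq c>0$. Since $u_1$ is \emph{not} parallel to $Q_m$, the constraint $(\xi_R,Q_m)=0$ does not eliminate the negative direction automatically; I would instead invoke the Weinstein--Vakhitov--Kolokolov argument. The constrained infimum
\begin{equation*}
\mu^*(m)=\inf\bigl\{(L_+\phi,\phi):\phi\in\mathbf{A},\ \|\phi\|_{L^2}=1,\ (\phi,Q_m)=0\bigr\}
\end{equation*}
is attained, with Lagrange relation $L_+\phi^*=\mu^*\phi^*+\nu Q_m$. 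On the spectral gap $(\mu_1(k),\mu_5(k))$ the scalar function $h(\mu)=((L_+-\mu)^{-1}Q_m,Q_m)$ is strictly increasing from $-\infty$ to $+\infty$ (the pole at $\mu_1$ is genuine because $(Q_m,u_1)\propto\int cn^2\,dn>0$), and $\mu^*$ coincides with its unique zero. Differentiating \eqref{eq:sol_wave} in $m$ gives $L_+(\partial_m Q_m)=-\omega'(m)Q_m$, and combined with $(Q_m,\partial_m Q_m)=\partial_m M[Q_m]=1$ this yields $h(0)=-1/\omega'(m)$. The Vakhitov--Kolokolov monotonicity $\omega'(m)>0$ along the cnoidal branch, which is also the condition underlying the Hamiltonian orbital stability of \cite{GuCoTs17}, then forces $h(0)<0$, hence $\mu^*(m)>0$ for every admissible $m$. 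Uniformity in $m\in(0,m_0]$ follows from continuity of $\mu^*$, together with the asymptotics $Q_m/\|Q_m\|_{L^2}\to\cos(x)/\|\cos\|_{L^2}$ as $m\to 0^+$: this limiting direction is precisely the zero-eigenmode of $L_+^{(0)}=-\partial_{xx}-1$ on $\mathbf{A}$, so the constraint remains effective in the limit.

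For the $H^1$-upgrade, I would write
\begin{equation*}
(L_\pm\xi,\xi)=\|\partial_x\xi\|_{L^2}^2+\omega(k)\|\xi\|_{L^2}^2-\alpha_\pm\int_0^{2\pi}Q_m^2\xi^2\,dx,\qquad\alpha_+=3,\ \alpha_-=1,
\end{equation*}
and use the uniform $L^\infty$-bound on $Q_m$ and the boundedness of $\omega(k)$ for $m\in[0,m_0]$ to rearrange this into $\|\partial_x\xi\|_{L^2}^2\leq(L_\pm\xi,\xi)+C\|\xi\|_{L^2}^2$; substituting the $L^2$-coercivity just obtained then produces \eqref{eq:H1_pos}. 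The main obstacle I anticipate is the uniformity as $m\to 0^+$, where $Q_m$, the limiting kernel of $L_\pm^{(0)}|_\mathbf{A}$, and the orthogonality constraint all degenerate together; this is precisely why restricting to $\mathbf{A}=H^1(\T)\cap\mathcal{A}^+$ is essential, since on $\mathcal{A}^-$ the zero mode of $L_\pm^{(0)}$ would be $\sin(x)$, which is orthogonal to every $Q_m\in\mathcal{A}^+$, making the constraint useless and causing coercivity to fail in the limit.
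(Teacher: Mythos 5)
Your proposal is correct, and it rests on the same two pillars as the paper's proof: the restriction to $\mathcal A^+$ to remove the directions that degenerate as $m\to0$ ($sn$ for $L_-$, $\partial_x cn$ for $L_+$), and the Vakhitov--Kolokolov sign condition to neutralize the single negative direction of $L_+$. Your treatment of $L_-$ and the final $L^2$-to-$H^1$ interpolation coincide with the paper's. Where you genuinely diverge is the constrained coercivity of $L_+$. The paper works in the larger space $H^1(\T)\cap\mathcal A$: it first shows $\tau_0=0$ under the single constraint $(v,Q_m)=0$ via the explicit decomposition $v=au_1+b\,\partial_xQ_m+q$ and a Cauchy--Schwarz inequality for the positive part of $L_+$, and then shows $\tau>0$ under the additional constraint $(v,\partial_xQ_m)=0$ (automatic for even functions) by a compactness argument and an analysis of the Euler--Lagrange equation $L_+u=\alpha u+\beta Q_m+\gamma\,\partial_xQ_m$. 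You instead work directly on $\mathbf{A}$, where $L_+$ has trivial kernel, and characterize the constrained infimum $\mu^*(m)$ through the resolvent function $h(\mu)=((L_+-\mu)^{-1}Q_m,Q_m)$. Note that your key number $h(0)=-1/\omega'(m)$, obtained from $L_+\partial_mQ_m=-\omega'(m)Q_m$ and $(Q_m,\partial_mQ_m)=\partial_mM[Q_m]=1$, is exactly the quantity the paper computes as $(L_+\chi,\chi)=-\tfrac12\tfrac{d}{d\omega}\|Q_m\|_{L^2}^2<0$ with $\chi=-\partial_\omega Q_m$, so the two arguments hinge on the same sign condition. What your route buys is a more transparent handle on the uniformity of the constant as $m\to0$ --- the delicate point of the whole proposition, which the paper's fixed-$m$ compactness argument addresses only implicitly --- since you get it from the explicit formula for $h(0)$ together with the convergence of $L_+^{(k)}$ to $-\partial_{xx}-1$ and of $Q_m/\|Q_m\|_{L^2}$ to the limiting kernel direction $\cos(x)$. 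What the paper's route buys is self-containedness: it never needs the spectral-gap properties of $\mu\mapsto h(\mu)$. Two details you should still supply: the attainment of the infimum defining $\mu^*$ and the non-vanishing of the Lagrange multiplier in front of $Q_m$ (this is where $(Q_m,u_1)\neq0$ enters), so that $\mu^*$ is indeed a zero of $h$; and the claim that $h\to+\infty$ at $\mu_5^-$ requires $Q_m$ to have a nonzero component on the $\mu_5$-eigenspace, though this is harmless because $h(0)<0$, the monotonicity of $h$, and the absence of eigenvalues of $L_+|_{\mathbf{A}}$ in $(\mu_1,\mu_5)$ already force $\mu^*>0$.
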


\begin{proof} 
By the spectral analysis presented in Subsection \ref{sub:spec}, we infer that
\begin{equation}\label{eq:L_min_coerc}
(L_- f, f) \geq C_-\| f\|_{L^2}^2,
\end{equation}
where $C_->0$ is given by
\begin{equation*}
C_- = \inf_{k\in [0,1)} \lambda_3(k) >0,
\end{equation*}
and $\lambda_3(k)$ is the fourth eigenvalue in $sp(L_-^{(k)})$, see \eqref{eq:lambda3}. \newline
We need to prove a similar coercivity property for $L_+$. We first claim that 
		\begin{equation}\label{eq:tau_0}
			\tau_0 = \inf \left\{ ( L_+ v,v )\,; \, v \in H^1(\T) \cap \mathcal{A}, \ (v,Q_m) = 0, \| v\|_{L^2} = 1\right\} = 0.
		\end{equation}
		Observe that $\tau_0 > -\infty$ because of the the lower bound 
		\begin{equation*}
			(L_+ v,v) \geq \omega\| v\|_{L^2}^2 - 3(Q_m^2 v, v) \geq (\omega - 3 \| Q_m\|_{L^\infty}^2) \|v\|_{L^2}^2.
		\end{equation*}
Moreover, notice that $\partial_x Q_m \in H^1(\T) \cap \mathcal{A}$ (in fact, from \eqref{eq:L+sp1}, we have $\partial_xQ_m\in\mathcal A^-$), $(\partial_x Q_m, Q_m) = 0$ and 
$L_+ \partial_x Q_m= 0$. 
This implies that $\tau_0 \leq 0$. 
Let us now define $\chi = -\partial_\omega Q_m$. By differentiating equation \eqref{eq:sol_wave} with respect to $\omega$ we have
\begin{equation}\label{eq:chi}
L_+ \chi = Q_m.
\end{equation}
Moreover, it is known from \cite[equation $4.11$]{Pa07} that
\begin{equation*} 
(L_+ \chi, \chi) = - \left( Q_m, \partial_\omega Q_m \right) = - \frac{1}{2} \frac{d}{d\omega} \|Q_m\|_{L^2}^2 < 0.
\end{equation*}
 We decompose $\chi$ as the sum
		\begin{equation*} 
			\chi = \alpha u_1+ \beta\partial_x Q_m + p
		\end{equation*}
where $u_1\in\mathcal A$ is the normalized eigenfunction of $L_+$ relative to $\mu_1$, see \eqref{eq:L+sp} and \eqref{eq:L+sp1}, $\alpha, \beta \in \R$ and $p \in \mathcal A_0 = \{ v \in H^1(\T) \cap \mathcal{A}; \ (v , \partial_x Q_m) = (v, \phi_0) = 0 \}$. One can easily see that 
	\begin{equation*}
			\begin{split}
				(L_+ p,p) = (L_+\chi, \chi) - \mu_1 \alpha^2 < -\mu_1 \alpha^2 .
			\end{split}
 \end{equation*}
Let us now consider an arbitrary $v \in H^1(\T) \cap \mathcal{A}$, satisfying $(v, Q_m) = 0$ and $\| v\|_{L^2} = 1$. As before, we decompose $v$ as 
		\begin{equation*} 
			v = a u_1 + b \partial_x Q_m + q,
		\end{equation*}
		with $a, b \in \R$ and $q \in \mathcal A_0$.
By using \eqref{eq:chi}, we have that
	\begin{equation*}
			\begin{split}
				&0 = (Q_m,v) =(L_+ \chi, v) = (\mu_1 \alpha u_1 + L_+ p, a u_1 + b \partial_x Q_m + q) = \mu_1 a \alpha+ (L_+ p, q).
			\end{split}
		\end{equation*}	
		Since $L_+$ restricted to $\mathcal A_0$ is positive-definite, we can use the Cauchy-Schwarz inequality 
		\begin{equation*} 
			|(L_+ f,g) |^2 \leq (L_+ f,f)(L_+g,g),
		\end{equation*}
		for every $f,g \in \mathcal{A}_0$. Thus we get
		\begin{equation*}
			\begin{split}
				(L_+ v,v) = \mu_1 a^2 + (L_+ q,q) \geq \mu_1 a^2 + \frac{|(L_+ p,q) |^2}{(L_+ p,p)} > \mu_1 a^2 - \frac{\alpha^2 a^2 \mu_1^2}{\alpha^2 \mu_1} = 0,
			\end{split}
		\end{equation*}
which implies that $\tau_0 = 0$. 
		The next step is to prove that 
		\begin{equation} \label{eq:tau1}
			\tau = \inf \left\{ ( L_+ v,v )\,;\, v \in H^1(\T) \cap \mathcal{A}, \ (v,Q_m) = (v, \partial_x Q_m) = 0, \| v\|_{L^2} = 1\right\} > 0. 
		\end{equation}
We straightforwardly have that $\tau\geq\tau_0=0$. \newline
We show that the infimum in \eqref{eq:tau1} is achieved. Let $\{ u_j\}_{j \in \N} \subset H^1(\T) \cap \mathcal{A} $ be a minimizing sequence such that 
\begin{equation}\label{eq:minse}
			( L_+ u_j,u_j ) \rightarrow \tau, \quad \| u_j\|_{L^2} = 1, \quad (u_j,Q_m) = (u_j, \partial_x Q_m) = 0.
\end{equation} 
From the first condition in \eqref{eq:minse} it follows that $(L_+ u_j, u_j)$ is uniformly bounded from above. This implies that
\begin{equation*} 
			\|\partial_x u_j\|_{L^2}^2 = (L_+ u_j,u_j) - \omega + 3 (Q_m^2, u_j^2) \lesssim 1
\end{equation*}
for all $j \in \N$. So $\{ u_j\}_{j \in \N}$ is a bounded sequence in $H^1(\T) \cap \mathcal{A}$. There exists a subsequence, still denoted by $\{ u_j\}_{j \in \N}$ and $u \in H^1(\T) \cap \mathcal{A} $ such that $u_j \rightharpoonup u \in H^1(\T) \cap \mathcal{A}$. In particular, $(u,Q_m) = 0$ and $(u, \partial_x Q_m) = 0$. 
From the compact embedding $H^1(\T) \cap \mathcal{A} \hookrightarrow L^2(\T)$, we have $\| u\|_{L^2} = 1.$	By the lower semicontinuity, it follows that
\begin{equation*}
			\tau \leq (L_+ u,u) \leq \liminf_{j \in \N} (L_+ u_j,u_j) = \tau.
\end{equation*}
Therefore the minimum is achieved by $u \in H^1(\T) \cap \mathcal{A}$, with 
$\| u\|_{L^2} = 1$, $(u,Q_m) = (u, \partial_x Q_m) = 0$.
Next, let us assume by contradiction that $\tau = 0$. Then there exist some Lagrange multipliers $\alpha, \beta, \gamma \in \R$ such that
		\begin{equation}\label{eq:lu} 
			L_+ u = \alpha u + \beta Q_m + \gamma \partial_x Q_m.
		\end{equation}
		Multiplying \eqref{eq:lu} by $u$ and integrating in space yields
		\begin{equation*}
		 0 = \tau = (L_+ u,u) = \alpha \|u\|_{L^2}^2,
		\end{equation*}
and thus $\alpha = 0.$ Moreover, by taking the scalar product of equation \eqref{eq:lu} with $\partial_x Q_m$, we get
		\begin{equation*} 
			0 = (u, L_+ \partial_x Q_m) = \gamma \|\partial_x Q_m\|_{L^2}^2 
		\end{equation*}
and so $\gamma = 0$. Consequently, we have
		\begin{equation}\label{eq:tok3CN}
		 L_+ u = \beta Q_m.
		\end{equation}
By combining \eqref{eq:tok3CN}, \eqref{eq:chi} with \eqref{eq:L+sp}, \eqref{eq:L+sp1}, we have that 
$u-\beta\chi\in Ker(L_+)=span[\partial_xQ_m]$. Hence, there exists a constant $c \in \R$ such that 
		\begin{equation}\label{eq:u1}
			u = \beta \chi + c \partial_x Q_m.
		\end{equation}
		Since we have that $(\chi, Q_m) < 0$, by taking the scalar product of \eqref{eq:u1} with $Q_m$, we get $\beta = 0$. In particular, $u = c\partial_x Q_m$. This is a contradiction because $(u,\partial_x Q_m) = 0$.
\newline
		Thus we have proven that if $f \in H^1(\T) \cap \mathcal{A} $ is real-valued and $(f, Q_m) = (f,\partial_xQ_m) = 0$ then 
		\begin{equation}\label{eq:L+coerc}
		 (L_+f,f) \geq \tau \| f\|_{L^2}^2.
		\end{equation}
 where $\tau >0$. 	\newline
By combining \eqref{eq:L_min_coerc} and \eqref{eq:L+coerc} we have that, for any $f=f_R+\ii f_I \in \mathcal X$, it holds
\begin{equation}\label{eq:l2pos}
	(L_+ f_R, f_R) + (L_-f_I, f_I) \geq C_0 \| f\|_{L^2}^2,
\end{equation}
where $C_0=\min(C_-,\tau)$. \newline
Finally, we claim that \eqref{eq:l2pos} implies the stronger bound \eqref{eq:H1_pos}.
Let $p = \omega - 3Q_m^2 \in L^\infty(\R)$, $f \in \mathcal{X}$ and 
$c>0$ be a constant that will be determined later.
Using \eqref{eq:l2pos} we can write 
	\begin{equation*}
	 \begin{split}
	 (L_+ f, f) &= c(L_+ f, f) + (1 - c) (L_+ f, f) \\
			&\geq c \| \partial_x f\|_{L^2}^2 - c \| p\|_{L^\infty} \| f\|_{L^2}^2 + (1 - c) C_0 \| f\|_{L^2}^2 = c \| f\|_{H^1}^2 ,
	 \end{split}
	\end{equation*}
	where we have chosen $c = \frac{C_0}{C_0 + \| p\|_{L^\infty} + 1}$. A similar computation can be done for $L_-$.
\end{proof}	

\begin{remark}
We notice that the constant $C$ in \eqref{eq:H1_pos} is uniform in $m\in[0, m_0]$. In particular, it does not go to zero as $m\to0$. 
On the other hand, the same property cannot hold in $H^1\cap\mathcal A$ anymore. Indeed, for any
\begin{equation*}
\xi\in\mathcal X_1=\{u\in H^1(\T)\cap\mathcal A:(u, Q_m)=(u, \ii Q_m)=0\},
\end{equation*}
we have
\begin{equation*}
(L_+\xi_R, \xi_R)+(L_-\xi_I, \xi_I)\geq c \lambda_2(k(m))\|\xi\|_{H^1}^2,
\end{equation*}
where $\lambda_2(k)$ is given in \eqref{eq:L-sp}. In particular, $\lambda_2(k(m))\to0$ as $m\to0$.
\end{remark}
 
The coercivity property \eqref{eq:H1_pos} can now be exploited to provide a similar lower bound for the Lyapunov functional $\mathcal L$ defined in \eqref{eq:L_Lyapunov}.

\begin{lem}\label{thm:L_equivalence} 
	Let $\psi_0 \in \A$, $\psi \in C([0,\infty), \A)$ be its associated solution and let $m$, $\gamma$ and $\xi$ be defined as in \eqref{eq:mass_diss}, \eqref{eq:gamma} and \eqref{eq:decomCN}, respectively. 
Then there exist $\delta>0$ and $C=C(\delta)>0$ not depending on $m$ such that, for $\|\xi(t)\|_{H^1}\leq\delta\sqrt{m(t)}$, we have
	\begin{equation} \label{eq:L_eq} 
		\|\xi(t) \|_{H^1}^2 \leq C \mathcal{L}[\psi(t)].
	\end{equation}
\end{lem}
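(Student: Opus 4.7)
The plan is to Taylor-expand $\mathcal{L}$ around $Q_m$, apply the coercivity of Proposition \ref{thm:pos} to the quadratic part, and absorb the cubic and quartic remainders using the hypothesis $\|\xi\|_{H^1}\leq\delta\sqrt{m_0}$. From the identity \eqref{eq:L_Lyapunov} I write
\begin{equation*}
\mathcal{L}[\psi]=\frac{1}{2}\bigl[(L_+\xi_R,\xi_R)+(L_-\xi_I,\xi_I)\bigr] + R[\xi],
\end{equation*}
where $R[\xi]$ gathers the cubic and quartic integrands on the right of \eqref{eq:L_Lyapunov}. Sobolev embedding $H^1(\T)\hookrightarrow L^\infty(\T)$ together with $\|Q_m\|_{L^\infty}\leq C(m_0)$ yields $|R[\xi]|\lesssim\|\xi\|_{H^1}^3+\|\xi\|_{H^1}^4$, so it suffices to establish the $m$-uniform coercivity
\begin{equation}\label{eq:planclaim}
(L_+\xi_R,\xi_R)+(L_-\xi_I,\xi_I)\geq c_0\,\|\xi\|_{H^1}^2,\qquad c_0>0,
\end{equation}
and then take $\delta$ sufficiently small.

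By Proposition \ref{prop:ort} we have $(\xi,\ii Q_m)=0$, but $(\xi,Q_m)=-M[\xi]=-\tfrac12\|\xi\|_{L^2}^2\neq 0$, so in general $\xi\notin\mathcal{X}$ and Proposition \ref{thm:pos} does not apply directly. To remedy this I project onto $\mathcal{X}$ by setting
\begin{equation*}
a:=-\frac{\|\xi\|_{L^2}^2}{4m},\qquad\xi^\perp:=\xi-aQ_m,
\end{equation*}
so that a direct check using $\|Q_m\|_{L^2}^2=2m$ gives $\xi^\perp\in\mathcal{X}$. Using $L_+Q_m=-2Q_m^3$ (immediate from \eqref{eq:sol_wave}), a short expansion yields
\begin{equation*}
(L_+\xi_R,\xi_R)+(L_-\xi_I,\xi_I)=\bigl[(L_+\xi^\perp_R,\xi^\perp_R)+(L_-\xi^\perp_I,\xi^\perp_I)\bigr]-4a(\xi^\perp_R,Q_m^3)-2a^2\|Q_m\|_{L^4}^4,
\end{equation*}
and Proposition \ref{thm:pos} bounds the bracketed term from below by $C\|\xi^\perp\|_{H^1}^2$.

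The key quantitative inputs are the uniform bounds $\|Q_m\|_{L^p}\lesssim\sqrt{m}$ for every $p\in[2,\infty]$ and $\|Q_m\|_{H^1}\lesssim\sqrt{m}$ on $[0,m_0]$: the former is read off from \eqref{eq:Qm} and interpolation with $\|Q_m\|_{L^2}^2=2m$, the latter from testing \eqref{eq:sol_wave} against $Q_m$. Under the hypothesis $\|\xi\|_{H^1}\leq\delta\sqrt{m}$, these give $|a|\leq\delta^2/4$ and consequently $\|aQ_m\|_{H^1}\lesssim\delta\|\xi\|_{H^1}$, so that $\|\xi^\perp\|_{H^1}\geq(1-c\delta)\|\xi\|_{H^1}$; analogously one checks $|4a(\xi^\perp_R,Q_m^3)|+2a^2\|Q_m\|_{L^4}^4\lesssim\delta\|\xi\|_{H^1}^2$. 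Plugging these into the expansion above and taking $\delta$ small yields \eqref{eq:planclaim}, after which absorbing $R[\xi]$ gives \eqref{eq:L_eq}.

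The main obstacle is keeping all constants uniform as $m\to 0$. The essential cancellation is that every apparent factor of $m^{-1}$ coming from $a$ is matched by a half-power of $m$ from $\|Q_m\|_{L^p}$ and by the quadratic smallness $\|\xi\|_{L^2}^2\leq\delta^2m$. This uniformity is what forces us to work on $\A=H^1(\T)\cap\mathcal{A}^+$ rather than on $H^1(\T)\cap\mathcal{A}$, on which the coercivity constant of Proposition \ref{thm:pos} would degenerate as $m\to 0$, as highlighted in the remark following that proposition.
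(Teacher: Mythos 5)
Your proposal is correct and follows essentially the same route as the paper: both decompose $\xi$ into its component along $Q_m$ (with coefficient $a=-\|\xi\|_{L^2}^2/(4m)$, the $\ii Q_m$-component vanishing by \eqref{eq:ortho_con}) plus a remainder in $\mathcal X$, apply Proposition \ref{thm:pos} to that remainder, use $L_+Q_m=-2Q_m^3$ for the cross terms, and absorb everything via the hypothesis $\|\xi\|_{H^1}\leq\delta\sqrt{m}$ matched against $\|Q_m\|_{L^p}\lesssim\sqrt m$. The only cosmetic difference is that you establish the $H^1$-coercivity of the quadratic form before absorbing the cubic--quartic remainder, whereas the paper first derives the $L^2$ lower bound on $\mathcal L$ and upgrades to $H^1$ at the end.
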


\begin{proof}
 We decompose $\xi$ as
	\begin{equation}\label{eq:tokk200}
		\xi = a Q_m + \ii b Q_m + y,
	\end{equation}
 where $a,b \in \R$ and $y \in \mathcal{X}$, that is $(y,Q_m) = (y,\ii Q_m) = 0$. 
 From Proposition \ref{thm:pos}, there exists $C>0$ such that
 \begin{equation}\label{eq:tokk203}
 (L_+ y_R, y_R) + (L_- y_I, y_I) \geq C\| y\|_{L^2}^2
 \end{equation}
 where $y_R$ and $y_I$ denote the real and imaginary part of $y$, respectively. Condition \eqref{eq:ortho_con} implies that $b=0$, while by \eqref{eq:ortho_con_2} we have that
 \begin{equation*}
 a = \frac{(\xi, Q_m)}{\| Q_m\|_{L^2}^2} = - \frac{\| \xi\|_{L^2}^2}{2 \| Q_m\|_{L^2}^2},
 \end{equation*}
 and consequently
 \begin{equation}\label{eq:tokk202}
 \| y\|_{L^2}^2 = \| \xi\|_{L^2}^2 - \frac{\| \xi\|_{L^2}^4}{4 \| Q_m\|_{L^2}^2}.
 \end{equation}
	We plug decomposition \eqref{eq:tokk200} into \eqref{eq:L_Lyapunov} to get
\begin{equation}\label{eq:L_prop}
			\begin{split}
				2\mathcal{L} [\psi]& = (L_+ y_R, y_R) + (L_- y_I, y_I) + a^2 (L_+ Q_m, Q_m) + 2a (L_+ Q_m, y) + R(\xi)
			\end{split}
		\end{equation} 
where 
\begin{equation*}
 R(\xi) = - 2\int Q_m \xi_R^3 + Q_m \xi_I^2 \xi_R + \frac{1}{4} |\xi|^4 dx.
\end{equation*}
We use \eqref{eq:tokk202} to bound the rest as
 \begin{equation*}
 \left| R(\xi) \right| \leq K\| \xi\|_{H^1}^3.
 \end{equation*}
	From \eqref{eq:tokk203}, \eqref{eq:tokk202}, Proposition \ref{thm:pos}, after straightforward computations, we obtain that
	\begin{equation*}
 \begin{aligned}
 &2\mathcal{L} [\psi] \geq C \| y \|_{L^2}^2 - 2 a^2 \| Q_m\|_{L^4}^4 + 4 a(Q_m^3,\xi) - K\| \xi\|_{H^1}^3 \\
 &\geq C\left(1 - \frac{1}{4} \frac{ \| \xi \|_{L^2}^2}{ \| Q_m\|_{L^2}^2} \right) \| \xi \|_{L^2}^2 - \frac{\| \xi\|_{L^2}^4}{2 \| Q_m\|_{L^2}^4} \| Q_m\|_{L^4}^4 + 2 \frac{\| \xi\|_{L^2}^2}{ \| Q_m\|_{L^2}^2} (Q_m^3,\xi) - K\| \xi\|_{H^1}^3 
 \end{aligned}
		\end{equation*}
	Hence, there exists $\delta>0$ small enough such that, for $\|\xi\|_{H^1} \leq \delta \| Q_m\|_{L^2}$, we have 
 \begin{equation*}
			\mathcal{L} [\psi] \geq \frac{C}2 \| \xi \|_{L^2}^2 
\end{equation*}
The above inequality can be improved to \eqref{eq:L_eq} 
similarly as done in the concluding step in the proof of Proposition \ref{thm:pos}, see \eqref{eq:l2pos} and subsequent discussion.
\end{proof}

 The next step in our stability analysis concerns the derivation of a suitable bound in time for the functional $\mathcal L$ defined in \eqref{eq:L_Lyapunov}. By using the identity
	 \begin{equation*}
		 \begin{split}
			S'_m[\psi] = \omega \psi - \partial_{xx}\psi - |\psi|^{2} \psi,	
		\end{split}
	\end{equation*}
 we write the damped NLS equation \eqref{eq:Nls_damp} as 
 \begin{equation*}
	 \partial_t \psi = -\ii S'_m[\psi] + \ii\omega \psi - \varepsilon \psi.
 \end{equation*}
 By differentiating $\mathcal L[\psi(t)]$ with respect to time, we obtain
	\begin{equation*}
		\begin{split}
			\frac{d}{dt} \mathcal{L}[\psi] &= \left( S'_m[\psi], \partial_t \psi \right) + \left(S'_m[Q_m], \partial_t Q_{m(t)} \right) - \frac{d\omega(m(t))}{dt} \left( M[\psi] - M[Q_m] \right) \\
			& = - \varepsilon\left( S'_m[\psi], \psi 	\right) = - \varepsilon\left(\omega \| \psi\|_{L^2}^2 + \| \partial_x \psi \|_{L^2}^2 - \| \psi\|_{L^4}^4 \right).
		\end{split}
	\end{equation*}
	Notice that $Q_m$ satisfies equation \eqref{eq:sol_wave} and thus by taking the scalar product of this equation with $Q_m$ we obtain 
	\begin{equation*}
		\omega = \frac{\| Q_m\|_{L^4}^4 - \| \partial_x Q_m\|_{L^2}^2}{\| Q_m\|_{L^2}^2},
	\end{equation*}
 so that, by also recalling that $\psi(t)$ and $Q_{m(t)}$ have the same mass, we infer
 \begin{equation*}
	\begin{split}
		\frac{d}{dt}\mathcal{L}[\psi] & = -\varepsilon( \| \partial_x \psi \|_{L^2}^2 - \| \psi\|_{L^4}^4 + \| Q_m\|_{L^4}^4 - \| \partial_x Q_m\|_{L^2}^2) \\ &
		= -2\varepsilon \mathcal{L}[\psi] + \frac{\varepsilon}{2} \left( \| \psi \|_{L^4}^4 - \| Q_m\|_{L^4}^4 \right).
	\end{split}
 \end{equation*}
 We exploit identity \eqref{eq:decopsi} in the quartic terms above to obtain
	\begin{equation}\label{eq:L_der}
		\begin{split}
			\frac{d}{dt} \mathcal{L}[\psi] &= - 2 \varepsilon \mathcal{L}[\psi] +2 \varepsilon \int_0^{2\pi} Q_m^3 \xi_R dx \\& + \varepsilon \int_0^{2\pi} 3 \xi_R^2 Q_m^2 + \xi_I^2 Q_m^2 + 2\xi_R|\xi|^2 Q_m 	+ \frac{1}{2} |\xi|^4 dx.
		\end{split}
	\end{equation}	
We can now use \eqref{eq:L_der} to obtain a Gronwall-type estimate on $\mathcal L$. 

\begin{prop} \label{thm:L_weak} 
		Let $\psi_0 \in \A$, $\psi \in C([0,\infty), \A)$ be the corresponding solution to \eqref{eq:Nls_damp} and $\xi$ be defined as in \eqref{eq:decomCN}.
Then, for $\|\xi(t)\|_{H^1}\leq\delta\sqrt{m(t)}$, where $\delta>0$ is determined in Lemma \ref{thm:L_equivalence}, we have
		\begin{equation}\label{eq:weak_L_law}
			\mathcal{L}[\psi(t)] \lesssim ( \mathcal{L}[\psi_0] + M[\psi_0]^2 ) e^{-2\varepsilon t}.
		\end{equation} 	
\end{prop}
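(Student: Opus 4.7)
The plan is to treat \eqref{eq:L_der} as the inhomogeneous linear ODE
\[
\frac{d}{dt}\mathcal{L}[\psi] + 2\eps\,\mathcal{L}[\psi] = R(t),
\]
where $R(t)$ collects the linear forcing $2\eps\int_0^{2\pi} Q_m^3\,\xi_R\,dx$ together with all contributions of order two or higher in $\xi$. Multiplying by the integrating factor $e^{2\eps t}$ and integrating on $[0,t]$, the desired bound \eqref{eq:weak_L_law} will follow at once from the pointwise estimate $R(t)\lesssim \eps\, m(t)^2$: since $m(t)^2 = m_0^2 e^{-4\eps t}$, this gives
\[
e^{2\eps t}\mathcal{L}[\psi(t)] - \mathcal{L}[\psi_0] \lesssim \eps\,m_0^2\int_0^t e^{-2\eps s}\,ds \leq \frac{m_0^2}{2},
\]
which rearranges exactly into \eqref{eq:weak_L_law}.

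The core technical point is a uniform scaling comparison between $\|Q_m\|_{L^\infty}$ and $\sqrt{m}$. From \eqref{eq:Qm} we have $\|Q_m\|_{L^\infty}^2 = 8k^2K(k)^2/\pi^2$, while \eqref{eq:k} shows that $m(k)$ is comparable to $k^2 K(k)$ uniformly on any compact subinterval of $[0,1)$, since the integrand in \eqref{eq:k} is bounded above and below on such intervals. Setting $k_0 = k(m_0)<1$, these together yield
\[
\frac{\|Q_m\|_{L^\infty}^2}{m}\,\lesssim\, K(k)\,\leq\, K(k_0),\qquad m\in[0,m_0],
\]
so that $\|Q_m\|_{L^\infty}^2\leq C(m_0)\,m$ uniformly in $m\in[0,m_0]$. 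This extra factor of $m$, rather than a mere uniform bound, is what produces a forcing decaying like $\eps m^2$ instead of just $\eps m$.

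Given this scaling, each term in $R(t)$ is estimated by Cauchy-Schwarz and the one-dimensional Sobolev embedding $H^1(\T)\hookrightarrow L^\infty(\T)$, together with the hypothesis $\|\xi\|_{H^1}\leq\delta\sqrt{m}$. The linear forcing is controlled by
\[
\left|2\eps\int_0^{2\pi} Q_m^3\,\xi_R\,dx\right|\leq 2\eps\,\|Q_m\|_{L^\infty}^2\,\|Q_m\|_{L^2}\,\|\xi\|_{L^2}\lesssim \eps\, m\cdot\sqrt{m}\cdot\sqrt{m}= \eps\,m^2,
\]
the quadratic terms by $\eps\|Q_m\|_{L^\infty}^2\|\xi\|_{L^2}^2\lesssim \eps\,m\cdot m=\eps\,m^2$, while the cubic and quartic contributions obey $\eps\|Q_m\|_{L^\infty}\|\xi\|_{H^1}^2\|\xi\|_{L^2}\lesssim\eps\,\sqrt{m}\cdot m\cdot\sqrt{m}=\eps\,m^2$ and $\eps\|\xi\|_{H^1}^4\lesssim \eps\,m^2$, respectively. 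Summing these yields $R(t)\lesssim\eps m(t)^2$ as required, and the Gronwall step displayed above concludes the argument. The main obstacle is precisely the uniform scaling $\|Q_m\|_{L^\infty}^2\lesssim m$: without this refined estimate, merely using $\|Q_m\|_{L^\infty}\leq C(m_0)$ would yield a forcing of order $\eps m$ only, leading after integration against $e^{2\eps s}$ to a spurious $\eps t\,e^{-2\eps t}$ growth incompatible with the claimed exponential decay in \eqref{eq:weak_L_law}.
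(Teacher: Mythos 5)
Your proof is correct, but it closes the Gronwall step differently from the paper, so let me compare. The paper bounds the linear forcing $2\eps\int Q_m^3\xi_R\,dx$ by $\eps\|Q_m\|_{L^\infty}\|Q_m\|_{L^2}^2\|\xi\|_{H^1}\lesssim\eps m\sqrt{\mathcal L[\psi]}$, invoking the coercivity of Lemma \ref{thm:L_equivalence} to replace $\|\xi\|_{H^1}$ by $\sqrt{\mathcal L}$, and then runs a Gronwall argument on $N(t)=e^{\eps t}\sqrt{\mathcal L[\psi(t)]}$; the quadratic-and-higher terms are absorbed as a $\eps C m\,\mathcal L$ correction to the decay rate. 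You instead use the a priori hypothesis $\|\xi\|_{H^1}\leq\delta\sqrt{m}$ directly in every forcing term, obtain the pointwise bound $|R(t)|\lesssim\eps m(t)^2$, and integrate the resulting linear ODE with the factor $e^{2\eps t}$ --- no square-root substitution and no use of the coercivity lemma inside this step. This is a legitimate and arguably more elementary route, and your scaling estimate $\|Q_m\|_{L^\infty}^2\leq C(m_0)\,m$ (via $\|Q_m\|_{L^\infty}^2=8k^2K^2/\pi^2$ and $m\sim k^2K(k)\Theta(k)$ with $\Theta$ bounded above and below on $[0,k_0]$) is correct and uniform for $m\in[0,m_0]$. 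One remark: your closing claim that this refined $L^\infty$ bound is indispensable is overstated. Even with only $\|Q_m\|_{L^\infty}\leq C(m_0)$, the splitting $|\int Q_m^3\xi_R|\leq\|Q_m\|_{L^\infty}\|Q_m\|_{L^2}^2\|\xi\|_{L^\infty}\lesssim m\cdot\sqrt m=m^{3/2}$ gives a forcing $\eps m^{3/2}=\eps m_0^{3/2}e^{-3\eps t}$, and $\int_0^t e^{2\eps s}\,\eps m_0^{3/2}e^{-3\eps s}\,ds\leq m_0^{3/2}$ is still bounded uniformly in $t$, so no secular $\eps t\,e^{-2\eps t}$ growth appears; the genuinely problematic bound would be $\eps m$ (one power of $m$ only), which neither you nor the paper is forced into. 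All constants in your argument depend on $m_0$ (through $K(k_0)$), which is consistent with the paper's own statement.
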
	

\begin{proof} 
Let $m$ be defined as in \eqref{eq:mass_diss} and let $\delta >0$ be the constant determined in Lemma \ref{thm:L_equivalence}.
We estimate the second line in \eqref{eq:L_der} as follows
\begin{equation*}
\begin{aligned}
 \varepsilon \int_0^{2\pi} 3 \xi_R^2 Q_m^2 + \xi_I^2 Q_m^2 + 2\xi_R|\xi|^2 Q_m 	+ \frac{1}{2} |\xi|^4 dx &\lesssim \eps \| \xi\|_{L^{\infty}}^2\big( \| Q_m\|_{L^2}^2 +(\xi,Q_m) + \| \xi\|_{L^2}^2 \big) \\ 
 &\lesssim \eps m\|\xi\|_{H^1}^2\lesssim
 \varepsilon m \mathcal{L}[\psi],
\end{aligned}
\end{equation*}
where the last inequality follows from \eqref{eq:L_eq}. The second term on the right-hand side of \eqref{eq:L_der} may be controlled as 
\begin{equation}\label{eq:obstr}
			2 \varepsilon \int_0^{2\pi} Q_m^3 \xi_R dx \lesssim \varepsilon \|Q_m\|_{L^\infty} \|Q_m\|_{L^2}^2 \| \xi\|_{H^1} \lesssim \varepsilon m \sqrt{\mathcal{L}[\psi]}.
\end{equation}
By plugging those two bounds in \eqref{eq:L_der} we obtain that
		\begin{align*}
			\frac{d}{dt} \mathcal{L}[\psi] \leq ( -2 \varepsilon + \varepsilon C m)\mathcal{L}[\psi] + K \varepsilon m \sqrt{\mathcal{L}[\psi] },
		\end{align*}
for some $C, K>0$. Let us define 
$N(t) = e^{\varepsilon t} \sqrt{\mathcal{L}[\psi(t)]}$, then
		\begin{equation*}
			\frac{d}{dt} N(t) \leq \frac{1}{2} \varepsilon C m(t) N(t) + \frac{1}{2} \varepsilon K m(t) e^{\varepsilon t}.
		\end{equation*}
Gronwall's lemma then yields
		\begin{displaymath}
			N(t) \lesssim m(0) \left(N(0) + (1 - e^{-\varepsilon t} )\right) \leq m(0) N(0) + m(0)
		\end{displaymath}
		which implies \eqref{eq:weak_L_law}.
	\end{proof} 
Let us remark that the estimate \eqref{eq:weak_L_law} provides an improvement on the first bound \eqref{eq:bnd1}. Indeed, for $\psi_0$ sufficiently close in $H^1(\T)$ to $Q_{m(0)}$, we obtain
\begin{equation*}
\| \xi(t)\|_{H^1} \lesssim M[\psi_0] e^{-\varepsilon t}.
\end{equation*}
However, as before this shows the stability of the family of cnoidal waves only if the initial mass is sufficiently small.
Unfortunately, the estimate proved in Proposition \ref{thm:L_weak} cannot be improved, because of the second term on the right-hand side of \eqref{eq:L_der}, see \eqref{eq:obstr}. To 
get rid of this difficulty, in the next section, we modify the profile around which we consider our perturbation. \newline
Let us observe that, to derive the bound \eqref{eq:weak_L_law}, no control on the modulational parameter $\gamma(t)$ is needed. On the other hand, in the next section, the perturbation analysis around the modified profile will require such control. For this purpose, in the next lemma, we provide an estimate on $\left| \omega - \dot \gamma\right|$. In order to derive this bound, we first find the equation satisfied by the remainder $\xi$. 
 By using the definition \eqref{eq:decopsi}, we see that 
\begin{equation} \label{eq:xi}
\begin{aligned}
 \ii \partial_t \xi & = - \partial_{xx}(Q_m + \xi) - |Q_m + \xi|^{2} (Q_m + \xi)
 -\dot\gamma (Q_m + \xi) - \ii \varepsilon \xi -\ii \varepsilon \left(Q_m - 2 m \partial_m Q_m \right).
\end{aligned}
\end{equation}
By exploiting the equation above, we prove the following.

\begin{lem}\label{lem:l-gamma}
There exists $\delta^{(1)} >0$ such that if $\|\xi(t)\|_{L^2} < \delta^{(1)}e^{-\varepsilon t}$ then we have
\begin{equation}\label{eq:omega-gamma}
|\omega(t)-\dot\gamma(t)|\lesssim
\eps+\|\xi(t)\|_{H^1}.
\end{equation}
\end{lem}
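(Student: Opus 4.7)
The plan is to derive a scalar identity for $\omega - \dot\gamma$ by combining the modulation condition \eqref{eq:ortho_con} with a projection of equation \eqref{eq:xi} onto the tangent direction $Q_m$. First, I would differentiate $(\xi(t), \ii Q_{m(t)}) = 0$ in time; using $\dot m = -2\varepsilon m$ gives
\begin{equation*}
(\partial_t \xi, \ii Q_m) = 2\varepsilon m (\xi, \ii\partial_m Q_m),
\end{equation*}
a quantity of size $O(\varepsilon \sqrt{m}\|\xi\|_{L^2})$ once one recalls that $\|\partial_m Q_m\|_{L^2} \sim m^{-1/2}$ as $m \to 0$.

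Second, substituting the profile equation \eqref{eq:sol_wave} into \eqref{eq:xi} and expanding the cubic nonlinearity, I would recast \eqref{eq:xi} in the form
\begin{equation*}
\ii \partial_t \xi = L(\xi) + (\dot\gamma - \omega)(Q_m + \xi) + N_3(\xi) - \ii \varepsilon \xi - \ii\varepsilon(Q_m - 2m\partial_m Q_m),
\end{equation*}
where $L(\xi) = L_+ \xi_R + \ii L_- \xi_I$ is the linearized operator from \eqref{eq:lin_oper2} and $N_3(\xi)$ collects all contributions that are quadratic or higher in $\xi$. Taking the real inner product against $Q_m$, the left-hand side becomes $(\ii\partial_t\xi, Q_m) = -(\partial_t\xi, \ii Q_m)$, already known from the first step. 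On the right, the identity $L_+ Q_m = -2Q_m^3$ (obtained from \eqref{eq:sol_wave} via a scaling derivative) and self-adjointness of $L_+$ give $(L(\xi), Q_m) = -2(\xi_R, Q_m^3)$. The two damping contributions vanish: $(\ii\xi, Q_m) = -(\xi, \ii Q_m) = 0$ by the modulation constraint, and $(\ii(Q_m - 2m\partial_m Q_m), Q_m) = 0$ because $Q_m$ and $\partial_m Q_m$ are real-valued. Using \eqref{eq:ortho_con_2} to evaluate $(Q_m+\xi, Q_m) = 2m - \tfrac12\|\xi\|_{L^2}^2$, I arrive at the key identity
\begin{equation*}
(\dot\gamma - \omega)\Big(2m - \tfrac{1}{2}\|\xi\|_{L^2}^2\Big) = 2(\xi_R, Q_m^3) - (N_3(\xi), Q_m) - 2\varepsilon m(\xi, \ii\partial_m Q_m).
\end{equation*}

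The final step is to estimate each term on the right by H\"older and Sobolev inequalities. Using the scalings $\|Q_m\|_{L^\infty}, \|Q_m\|_{L^2} \lesssim \sqrt m$ and $\|\partial_m Q_m\|_{L^2} \lesssim m^{-1/2}$ on $m \in (0, m_0]$, one obtains $|(\xi_R, Q_m^3)| \lesssim m^{3/2}\|\xi\|_{L^2}$, $|(N_3(\xi), Q_m)| \lesssim m\|\xi\|_{H^1}^2 + \sqrt m\|\xi\|_{H^1}^3$, and $\varepsilon m|(\xi, \ii\partial_m Q_m)| \lesssim \varepsilon\sqrt m\|\xi\|_{L^2}$. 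Choosing $\delta^{(1)}$ small relative to $\sqrt{m_0}$, the hypothesis $\|\xi(t)\|_{L^2} < \delta^{(1)} e^{-\varepsilon t}$ combined with $m(t) = m_0 e^{-2\varepsilon t}$ forces the coefficient $2m - \tfrac12\|\xi\|_{L^2}^2$ to be bounded below by $m$ and the ratio $\|\xi\|_{L^2}/\sqrt{m(t)}$ to remain uniformly small in $t$. Dividing through then yields $|\omega - \dot\gamma| \lesssim \|\xi\|_{H^1} + \varepsilon$, as claimed.

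The main delicate point is the shrinking mass: since $m(t) \to 0$ and $\|\partial_m Q_m\|_{L^2}$ is singular as $m \to 0$, the $m$-dependence of every constant must be tracked carefully to prevent a spurious $1/m$ blow-up when dividing by the coefficient of $\dot\gamma - \omega$. This is precisely what the calibration $\|\xi\|_{L^2} \lesssim e^{-\varepsilon t} \sim \sqrt{m(t)}$ in the hypothesis is designed to ensure.
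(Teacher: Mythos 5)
Your proposal is correct and follows essentially the same route as the paper: project the perturbation equation onto $Q_m$, use the orthogonality conditions \eqref{eq:ortho_con}--\eqref{eq:ortho_con_2} to kill the damping contributions and to evaluate the coefficient $2m-\tfrac12\|\xi\|_{L^2}^2$ of $\dot\gamma-\omega$, handle the $\partial_t\xi$ term by differentiating the modulation constraint together with $\|\partial_m Q_m\|_{L^2}\lesssim m^{-1/2}$, and reduce the linear term to $(\xi_R,Q_m^3)$ via $L_+Q_m=-2Q_m^3$ (the paper writes this same identity as $S_m''[Q_m]Q_m=-2Q_m^3$). The careful tracking of powers of $m$ and the lower bound on the coefficient under the hypothesis $\|\xi\|_{L^2}<\delta^{(1)}e^{-\varepsilon t}$ match the paper's argument exactly.
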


\begin{proof}
By taking the scalar product of \eqref{eq:xi} with $Q_m$, we obtain that
\begin{equation}\label{eq:lstima}
\begin{aligned}
0=&\left(Q_m, \ii\d_t\xi\right)
+\left(Q_m, \d_{xx}\xi - \omega \xi
+|Q_m+\xi|^2(Q_m+\xi)-Q_m^3\right)\\
&+(\omega-\dot\gamma)\left(Q_m, Q_m+\xi\right)
+\eps\left(Q_m, \ii(Q_m + \xi) \right) - 2\varepsilon m (Q_m, \ii \partial_m Q_m).
\end{aligned}
\end{equation}
We now analyze the contributions provided by each term in the identity above. By using \eqref{eq:ortho_con}, we rewrite the first term as
\begin{equation*}
\left|\left(Q_m, \ii\d_t\xi\right)\right|=
\left|-\left(\d_tQ_m, \ii\xi\right)\right|=2\eps m \left|\left(\d_mQ_m, \ii\xi\right)\right| \lesssim \varepsilon m,
\end{equation*}
where we used \eqref{eq:bnd1} and $\| \d_mQ_m\|_{L^2} \lesssim \frac{1}{\sqrt{m}}$, see \eqref{eq:dm_Qm}. 
For the second term we have
\begin{equation*}\label{eq:lstima1}
\begin{aligned}
&\big(Q_m,\, \d_{xx}\xi-\omega\xi
+|Q_m+\xi|^2(Q_m+\xi)-Q_m^3\big)\\
&=\big(Q_m, -S''_m[Q_m]\xi
+|\xi|^2(Q_m+\xi)+2Re(\overline{Q}_m\xi)\xi \big)\\
&=\big(-S''_m[Q_m]Q_m, \xi\big)
+\big(Q_m, |\xi|^2(Q_m+\xi)+2Re(\overline{Q}_m\xi)\xi\big)\\
&=\big(2Q_m^3, \xi\big) +\big(Q_m, |\xi|^2(Q_m+\xi)+2Re(\overline{Q}_m\xi)\xi\big).
\end{aligned}\end{equation*}
Since
\begin{equation*}
 \big(2Q_m^3, \xi\big) \leq \| Q_m\|_{L^{\infty}} \| \xi\|_{L^\infty} \| Q_m\|_{L^2}^2 \lesssim C(m_0) m \| \xi\|_{H^1}
\end{equation*}
and
\begin{equation*}
 \big(Q_m, |\xi|^2(Q_m+\xi)+2Re(\overline{Q}_m\xi)\xi\big) \lesssim \| \xi\|_{L^\infty}^2 \| Q_m\|_{L^2}^2 \lesssim m \| \xi\|_{H^1}^2,
\end{equation*}
then we obtain that
\begin{equation*}
 \left|\big(Q_m,\, \d_{xx}\xi-\omega\xi
+|Q_m+\xi|^2(Q_m+\xi)-Q_m^3\big)\right| \lesssim m \left( \| \xi\|_{H^1} + \| \xi\|_{H^1}^2\right).
\end{equation*}
We now consider the third term in \eqref{eq:lstima} and we use \eqref{eq:ortho_con_2} to write
\begin{equation*}
(\omega-\dot \gamma)\left(Q_m, Q_m+\xi\right)
=(\omega-\dot\gamma)\big(2m-\frac{\|\xi\|_{L^2}^2}{2}\big).
\end{equation*}
By \eqref{eq:ortho_con}, we know that the fourth term vanishes. Finally, since $Q_m$ is real-valued, the fifth term vanishes too.
By plugging everything together, we obtain that
\begin{equation}\label{eq:o-l}
\left|\omega(t)-\dot\gamma(t)\right|\big|2m(t)-\frac{\|\xi(t)\|_{L^2}^2}{2}\big|\lesssim m(t)(\eps+\|\xi(t)\|_{H^1}).
\end{equation}
Now we suppose that $\|\xi(t)\|_{L^2}\leq \delta^{(1)} e^{-\varepsilon t}$. From \eqref{eq:mass} we observe that if $\delta^{(1)}$ is small enough, then 
\begin{equation*}
 \big|2m(t)-\frac{\|\xi(t)\|_{L^2}^2}{2}\big| \geq m(t),
\end{equation*}
for any $t >0$. 
Thus \eqref{eq:o-l} yields the estimate \eqref{eq:omega-gamma}. 
\end{proof}

\section{The approximated solitary wave and improved error bounds
} \label{sec:Adj}
The goal of this section is to improve on the control \eqref{eq:weak_L_law} for the perturbation $\xi$. 
The forcing term on the right-hand side of \eqref{eq:xi} implies that the perturbation can grow for times of order $\varepsilon^{-1}$. The presence of this term in \eqref{eq:xi} comes from the fact that the periodic wave is not a solution to \eqref{eq:Nls_damp} for $\eps>0$. 
To overcome this issue we are going to construct an approximating profile $Q_{m, \eps}$ that comprises the forcing at a first order.
Then, we will adapt the analysis developed in Section \ref{sec:first} and prove the coercivity of the linearized operator around $Q_{m, \eps}$. Moreover, we will also show that $Q_{m, \eps}$ is close to $Q_m$, indeed it can be regarded as a first-order correction to the periodic wave. Finally, we will define a modified Lyapunov functional, depending on $Q_{m, \eps}$ that satisfies an improved Gronwall-type estimate. \newline
Let us now explain how we construct the approximate profile $Q_{m, \eps}$. The main idea is to incorporate the forcing term appearing in \eqref{eq:xi} into the equation for $Q_{m, \eps}$, namely we consider the following equation for the profile
\begin{equation}\label{eq:Qmeps}
\partial_{xx}Q_{m, \eps}-\lambda_{m, \eps}Q_{m, \eps}+|Q_{m, \eps}|^2Q_{m, \eps}
+\ii\eps\left(Q_m-2m\partial_mQ_m\right)=0,
\end{equation}
where $\lambda_{m, \eps}$ is a suitable small perturbation of $\omega$. 
We determine the solution to \eqref{eq:Qmeps} employing the Implicit Function theorem.

\begin{prop}\label{thm:impl} 
		There exists $\varepsilon_0 > 0$ such that for any $|\varepsilon| < \varepsilon_0$, there exists $\lambda = \lambda(m,\varepsilon) \in \R$ and a profile $ Q_{m,\varepsilon} \in \A $ satisfying \eqref{eq:Qmeps}. Moreover, we have
		\begin{equation}\label{eq:Q_close}
				\| Q_{m,\varepsilon} - Q_m\|_{H^1} \lesssim \varepsilon \sqrt{m}, \quad |\omega - \lambda| \lesssim \varepsilon\sqrt{m}.
		\end{equation}
\end{prop}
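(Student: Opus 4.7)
The plan is to apply an implicit function theorem around $(Q,\lambda,\eps)=(Q_m,\omega,0)$ to the map
\[\mathcal F:\A\times\mathbb R\times\mathbb R\to L^2(\T)\cap\mathcal A^+,\quad \mathcal F(Q,\lambda,\eps) = \partial_{xx}Q - \lambda Q + |Q|^2 Q + \ii\eps(Q_m - 2m\partial_m Q_m).\]
We have $\mathcal F(Q_m,\omega,0)=0$ by \eqref{eq:sol_wave}, and the Fr\'echet derivative in $(Q,\lambda)$ at this point acts on $(k,\mu)\in\A\times\mathbb R$ as $D\mathcal F[k,\mu] = -L_+ k_R - \ii L_- k_I - \mu Q_m$. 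From Subsection \ref{sub:spec}, $L_+$ is invertible on $H^1(\T)\cap\mathcal A^+$ (its kernel $\mathrm{span}(\partial_x Q_m)$ lies in $\mathcal A^-$), whereas $L_-$ restricted to $H^1(\T)\cap\mathcal A^+$ has a one-dimensional kernel $\mathrm{span}(Q_m)$ reflecting the phase invariance $L_- Q_m=0$. Consequently $D\mathcal F$ is not invertible: it has a two-dimensional kernel, spanned by the phase direction $(\ii Q_m,0)$ and by $(-\chi,1)$ where $\chi=-\partial_\omega Q_m$ satisfies $L_+\chi=Q_m$ by \eqref{eq:chi}, and its range is the codimension-one subspace $(\ii Q_m)^\perp\subset L^2(\T)\cap\mathcal A^+$.

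To unlock the IFT I would perform a Lyapunov--Schmidt reduction: impose the two orthogonality conditions $(h,\ii Q_m)=0$ and $(h,Q_m-2m\partial_m Q_m)=0$ on $h=Q-Q_m$, defining a closed subspace $X\subset\A$, and project the target onto $Y=(\ii Q_m)^\perp$. The first orthogonality kills the phase direction of the kernel, while the second kills the direction $(-\chi,1)$ provided the non-degeneracy $(\chi,Q_m-2m\partial_m Q_m)\neq 0$ holds; using $\chi=-(\partial_\omega m)\partial_m Q_m$ and $(\partial_m Q_m,Q_m)=1$, this reduces to a condition on the monotone functions $m(k)$ and $\omega(k)$. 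Surjectivity onto $Y$ follows from the invertibility of $L_+$ on $\mathcal A^+$ and of $L_-:\{k_I\perp Q_m\}\to Q_m^\perp$. Applying the IFT to $\Pi_Y\mathcal F:X\times\mathbb R\times\mathbb R\to Y$ then yields, for every $|\eps|<\eps_0$, a unique pair $(h(\eps),\lambda(\eps))\in X\times\mathbb R$ solving $\Pi_Y\mathcal F=0$.

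The delicate step is to upgrade the projected equation to the full one. By construction $\mathcal F(h(\eps),\lambda(\eps),\eps)=c(\eps)\,\ii Q_m$ for some $c(\eps)\in\mathbb R$, and I would show $c(\eps)=0$ by writing $\mathcal F = -S'_\lambda[Q_m+h] + \ii\eps(Q_m-2m\partial_m Q_m)$ with the phase-invariant action $S_\lambda[u]=E[u]+\lambda M[u]$ (cf. \eqref{eq:actionS}) and pairing with $\ii(Q_m+h)$. The identity $(S'_\lambda[u],\ii u)\equiv 0$ annihilates the nonlinear contribution, whereas the two orthogonalities $(Q_m-2m\partial_m Q_m,Q_m)=\|Q_m\|_{L^2}^2-2m=0$ (using $(\partial_m Q_m,Q_m)=\tfrac12\partial_m\|Q_m\|_{L^2}^2=1$) and $(Q_m-2m\partial_m Q_m,h)=0$ (the second constraint defining $X$) kill the forcing contribution. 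One then obtains $c(\eps)(\|Q_m\|_{L^2}^2+(Q_m,h))=0$, and since the parenthesis is bounded away from zero for $h$ small, $c(\eps)=0$. This computation reveals exactly why the inhomogeneous term in \eqref{eq:Qmeps} has the specific shape $Q_m-2m\partial_m Q_m$.

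The bounds \eqref{eq:Q_close} follow by linearizing the implicit relation at $\eps=0$: the derivatives $h'(0)$ and $\lambda'(0)$ satisfy a linear equation with source $-\ii(Q_m-2m\partial_m Q_m)$, so the invertibility of the reduced linearization gives $\|h(\eps)\|_{H^1}+|\lambda(\eps)-\omega|\lesssim\eps\|Q_m-2m\partial_m Q_m\|_{L^2}$. Combining $\|Q_m\|_{L^2}=\sqrt{2m}$ with the bound $\|\partial_m Q_m\|_{L^2}\lesssim 1/\sqrt m$ already invoked in the proof of Lemma \ref{lem:l-gamma} yields $\|Q_m-2m\partial_m Q_m\|_{L^2}\lesssim\sqrt m$, which gives both inequalities in \eqref{eq:Q_close}. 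The main obstacle of the whole argument is precisely the phase-invariance cancellation described in the previous paragraph: it is what allows the Lyapunov--Schmidt reduction to collapse onto the full equation, and it is the structural reason behind the specific choice of the approximating profile in \eqref{eq:Qmeps}.
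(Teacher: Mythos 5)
Your route is genuinely different from the paper's. The paper does not perform a Lyapunov--Schmidt reduction: it augments the profile equation with the mass constraint $M[f]=m$ as a second scalar equation paired with the extra unknown $\lambda$, restricts the domain to $\mathcal Y=\{u\in\A:(u,\ii Q_m)=0\}$ to remove the phase degeneracy, and shows the resulting Jacobian $(\rho,\alpha)\mapsto(S_m''[Q_m]\rho+\alpha Q_m,(Q_m,\rho))$ is an isomorphism (injectivity from the uniform coercivity of Proposition \ref{thm:pos}, surjectivity by solving explicitly for $\alpha$ via $L_+\partial_mQ_m=-\tfrac{d\omega}{dm}Q_m$). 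In particular the paper's profile carries the normalization $M[Q_{m,\eps}]=m$, which is used later (e.g. in \eqref{eq:ortho_con_21} and in the identity $-2(\zeta,Q_{m,\eps})=\|\zeta\|_{L^2}^2$); your profile is normalized differently, by $(h,Q_m-2m\partial_mQ_m)=0$, so it would not plug into Section \ref{sec:Adj} without reworking those identities. That said, your identification of the kernel and cokernel is correct, and your resolution of the bifurcation equation --- pairing with $\ii(Q_m+h)$ and using $(S'_\lambda[u],\ii u)\equiv0$ together with $(Q_m-2m\partial_mQ_m,Q_m)=2m-2m(\partial_mQ_m,Q_m)=0$ --- is a clean way to see why the forcing term has exactly this shape; the paper never makes this point explicit.

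There is, however, a genuine gap in the transversality step. You assert that the non-degeneracy $(\chi,Q_m-2m\partial_mQ_m)\neq0$ ``reduces to a condition on the monotone functions $m(k)$ and $\omega(k)$''. It does not: writing $\chi=-\tfrac{dm}{d\omega}\partial_mQ_m$ gives
\[
(\chi,\,Q_m-2m\partial_mQ_m)=-\frac{dm}{d\omega}\Bigl(1-2m\|\partial_mQ_m\|_{L^2}^2\Bigr),
\]
and the factor $1-2m\|\partial_mQ_m\|_{L^2}^2$ is not determined by $m(k)$ and $\omega(k)$ alone. Its non-vanishing, uniformly for $m\in(0,m_0]$, is a quantitative statement about the cnoidal family that must be proved: the asymptotics of the Appendix give $2m\|\partial_mQ_m\|_{L^2}^2\to\frac{7\pi^2+6}{12}>1$ as $m\to0$, so the condition holds near $m=0$, but you never rule out a crossing at intermediate $k$. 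If this quantity vanished for some $m$, the vector $(-\chi,1)$ would lie in your subspace $X\times\R$ and the reduced linearization would fail to be injective, so this step is essential to your construction and cannot be waved away. Relatedly, your derivation of \eqref{eq:Q_close} from ``the invertibility of the reduced linearization'' requires the norm of that inverse to be bounded uniformly in $m\in(0,m_0]$ (and likewise $\eps_0$ must be uniform in $m$, since $m(t)\to0$ along the flow); this uniformity is exactly what the paper extracts from the $m$-uniform coercivity of Proposition \ref{thm:pos} when it estimates $\zeta=Q_{m,\eps}-Q_m$ directly from its equation, and your proposal never invokes it.
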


\begin{proof} 
Let us consider the map $G:H^1(\T)\times\R\times\R_+\to H^{-1}\times\R$, defined by
 \begin{equation}\label{eq:F_big}
 G(f,\lambda, \varepsilon) = \left(\begin{matrix}
 -\partial_{xx} f - |f|^2 f + \lambda f - \ii \varepsilon (Q_m - 2m\partial_m Q_m)\\
 M[f] - m
 \end{matrix} \right).
 \end{equation}
The zeroes of this map determine the approximated profile, since
\begin{equation*}
G(Q_{m, \varepsilon}, \lambda_{m, \varepsilon}, \varepsilon)=0
\end{equation*}
implies that \eqref{eq:Qmeps} is satisfied, with $M[Q_{m, \eps}]=m$. However, we remark that for $\varepsilon = 0$, $G$ does not admit a unique solution, since for any $\phi \in [0,2\pi)$, we have
 \begin{equation*}
 G(e^{\ii \phi} Q_m, \omega, 0) = 0.
 \end{equation*}
This implies that the Jacobian matrix of $G$ evaluated in $(Q_m, \omega, 0)$ has a non-trivial kernel and we cannot directly exploit the Implicit Function theorem to find the approximating profile $Q_{m,\varepsilon}$. To overcome this degeneracy at $\varepsilon=0$, we restrict our analysis to the space
\begin{equation}\label{eq:mathY}
		 \mathcal{Y} = \{ u \in \A, \, (u, \ii Q_m) = 0\}.
\end{equation}
Thus, Proposition \ref{thm:cnoidal} yields that for $\varepsilon = 0$, \eqref{eq:F_big} has a unique solution, given by $f = Q_m$, $\lambda = \omega$. \newline
The Jacobian matrix of the function 
\begin{equation*}
 G_0(f,\lambda) = G(f,\lambda, 0)
\end{equation*}
is given by 
\begin{displaymath}
			DG_0(f, \lambda) = \left[\begin{matrix}
				& \mathcal M (f,\lambda) & f & \\
				& (f,\cdot) &0 \\
			\end{matrix} \right],
\end{displaymath}
	where 
\begin{equation*}
 \mathcal M (f,\lambda) u = - \partial_{xx}u + \lambda u - |f|^2 u - 2 Re(\bar{f} u) f.
\end{equation*}
For any $u,z \in \mathcal Y$, $\lambda,\mu \in \R$, we can explicitly write
\begin{align*}
			G_0(z,\lambda) & = G_0(u,\mu) + DG_0(u,\mu)\left( \begin{matrix}
				z-u \\
				\mu- \lambda 
			\end{matrix}\right) \\ & + \left( \begin{matrix}
				(z - u) (\mu - \lambda) + z(|u|^2 - |z|^2) + 2u ( Re(\bar{u}z) - |u|^2) \\
				0 \end{matrix} \right).
\end{align*}
This implies that $G_0$ is continuously differentiable, since
\begin{equation*}
	\begin{split}
				&\bigg \| G_0(z,\mu) -	 G_0(u,\lambda) - DG_0(u,\lambda) \left( \begin{matrix}
					z-u \\
					\mu- \lambda 
				\end{matrix}\right) \bigg\|_{H^1} \\&\lesssim |\mu - \lambda|\| z - u\|_{H^1} + \| z - u\|_{H^1} \big( \| z\|_{H^1} \| z + u \|_{H^1} + \| u\|_{H^1}^2 \big).
	\end{split}
\end{equation*}	
Let us now prove that the Jacobian matrix evaluated in $(Q_m,\omega)$ is invertible. 
To prove the injectivity of $DG_0(Q_m, \omega)$ we suppose that 
		\begin{equation*} 
			DG_0(Q_m, \omega) \left( \begin{matrix}
				\rho \\
				\alpha
			\end{matrix}\right) = 0, 
		\end{equation*}
 for some $\rho \in \mathcal{Y}$ and $\alpha \in \R$. This implies that
\begin{equation}\label{eq:eq1}
		S_m''[Q_m] \rho +\alpha Q_m = 0, \quad (Q_m, \rho) = 0.
 \end{equation}
From the second condition in \eqref{eq:eq1}, we observe that $\rho\in\mathcal X$, see \eqref{eq:X_sp}, consequently Proposition \ref{thm:pos} implies that there exists $C>0$ such that
		\begin{equation*}
		 (S_m''[Q_m] \rho, \rho) \geq C\| \rho\|_{H^1}^2,
		\end{equation*}
see also \eqref{eq:quad}.
		By taking the scalar product of the first equation in \eqref{eq:eq1} with $\rho$, we obtain $\rho = 0$, which implies that $DG_0(Q_m, \omega)$ is injective. To prove its surjectivity, we consider $(w,p) \in \mathcal{Y} \times \R$ and we look for $(\rho, \alpha) \in \mathcal{Y} \times \R$ satisfying 
		\begin{equation}\label{eq:tok100}
		 S_m''[Q_m] \rho + \alpha Q_m = w, \quad (Q_m, \rho) = p. 
		\end{equation}
We take the scalar product of the first equation in \eqref{eq:tok100} with $\partial_m Q_m$ and use
		\begin{equation*} 
			L_+ \partial_m Q_m = - \left(\frac{d}{dm} \omega(m)\right) Q_m,
		\end{equation*} 
to obtain that
		\begin{displaymath}
			-\left(\frac{d}{dm} \omega(m)\right) (\rho,Q_m) + \alpha(Q_m, \partial_m Q_m) = (w, \partial_m Q_m),
		\end{displaymath}
which implies that $\alpha$ can be determined by
	\begin{equation*}
	 \alpha = \frac{\left(\frac{d}{dm} \omega(m)\right) p + (w, \partial_m Q_m) }{(Q_m, \partial_m Q_m)}.
	\end{equation*}
 Moreover, by recalling that $S_m''[Q_m]$ is invertible in $ \mathcal Y$, see \eqref{eq:L-sp} and \eqref{eq:L+sp}, we have
		\begin{displaymath}
			\rho = \left( S_m''[Q_m]\right)^{-1} [ w -\alpha Q_m].
		\end{displaymath}
It is possible to verify that $(S_m''[Q_m])^{-1}:\mathcal{A}^+ \rightarrow \mathcal{A}^+ \cap H^2_{loc}$ (see \cite[Section 2]{GuCoTs17}). Therefore, the Implicit Function theorem implies that there exists a $\varepsilon_0>0$ so that for all $\varepsilon \in [0,\varepsilon_0)$ there exists a profile $Q_{m,\varepsilon} \in \mathcal Y$, and $\lambda_{m, \eps} \in \R$ such that $G(Q_{m,\varepsilon}, \lambda_{m, \varepsilon}, \varepsilon) = 0$. \newline
We now prove the estimates in \eqref{eq:Q_close}.
By construction, the profile $Q_{m,\varepsilon}$ satisfies the equation
		\begin{equation}\label{eq:F_eq}
			- \partial_{xx} Q_{m,\varepsilon} - |Q_{m,\varepsilon}|^2 Q_{m,\varepsilon} + \lambda_{m, \eps} Q_{m,\varepsilon} - \ii \varepsilon(Q_m - 2m \partial_m Q_m) = 0,
		\end{equation}
with $M[Q_{m,\varepsilon}] = m$. 
By denoting
		\begin{equation}\label{eq:zeta}
			\zeta = Q_{m,\varepsilon} - Q_m,
		\end{equation}
we see that it satisfies the following equation 
		\begin{equation}\label{eq:tok101}
 \begin{aligned}
 \ii \varepsilon(Q_m - 2m \partial_m Q_m) &= S''_m[Q_m]\zeta - 2 Re(\zeta) \zeta Q_m -|\zeta|^2 Q_m - |\zeta|^2 \zeta \\
 & + (\lambda_{m, \eps} -\omega) Q_{m,\varepsilon}.
 \end{aligned}
		\end{equation}
The condition $ M[Q_m] = M[Q_{m,\varepsilon}]$ implies that $-2(\zeta,Q_{m,\varepsilon}) = \| \zeta\|_{L^2}^2$ and, since $(Q_{m,\varepsilon}, \ii Q_m) = 0$, we also have that $(\zeta, \ii Q_m) = 0$. Thus, by taking the scalar product of \eqref{eq:tok101} with $\zeta$ we obtain
		\begin{equation}\label{eq:F_eq_2}
			\begin{split}
					(S_m''[Q_m] \zeta, \zeta) &+ (- 2 \zeta_R \zeta Q_m - |\zeta|^2 Q_m - |\zeta|^2 \zeta,\zeta) -\frac{1}2 (\lambda_{m, \eps} - \omega) \| \zeta\|_{L^2}^2 \\ &=\varepsilon (\ii \left(Q_m - 2 m \partial_m Q_m \right), \zeta).
			\end{split}
		\end{equation} 
Lemma \ref{thm:L_equivalence} implies that there exists $C>0$ such that
 \begin{equation*}
 C\| \zeta\|_{H^1}^2 \leq (S_m''[Q_m] \zeta, \zeta).
 \end{equation*}
The cubic terms in $\zeta$ in \eqref{eq:F_eq_2} are estimated as
 \begin{equation*}
 \left|(- 2 \zeta_R \zeta Q_m - |\zeta|^2 Q_m - |\zeta|^2 \zeta,\zeta) \right| \lesssim \| Q_m\|_{L^\infty} \| \zeta\|_{H^1}^3 + \| \zeta\|_{H^1}^4.
 \end{equation*}
 By continuity property with respect to the control parameter implied by the Implicit Function theorem, we can restrict $\varepsilon_0$ to have $\|\zeta\|_{H^1} \ll 1$ small enough so to obtain the inequality 
 \begin{equation*}
		 \frac{C}2\| \zeta\|_{H^1}^2 \leq (S_m''[Q_m] \zeta, \zeta) + (- 2 \zeta_R \zeta Q_m - |\zeta|^2 Q_m - |\zeta|^2 \zeta,\zeta).
		\end{equation*}
		This leads to 
		\begin{equation*}
		 \frac{C}2\| \zeta\|_{H^1}^2 - \frac{1}2 (\lambda_{m, \eps} - \omega) \| \zeta\|_{L^2}^2 \leq 2m \varepsilon (\ii \partial_m Q_m, \zeta).
		\end{equation*}
		 We can further restrict $\varepsilon_0$ to ensure that $|\lambda_{m, \eps} - \omega| < \frac{C}2$ and obtain
		\begin{equation*}
		 \frac{1}4 C\| \zeta\|_{H^1}^2 \leq 2\varepsilon m \left\|\partial_m Q_m \right\|_{L^2} \|\zeta\|_{L^2}.
		\end{equation*}
Now, the first inequality in \eqref{eq:Q_close} comes from the fact that 
$\|\partial_m Q_m \|_{L^2} \lesssim m^{-\frac{1}{2}}$. For the proof of this last estimate, we refer to the Appendix, see \eqref{eq:dm_Qm}.
		Finally, we take the scalar product of \eqref{eq:tok101} with $Q_{m}$ to obtain
 \begin{equation*}
 \begin{aligned}
 (\lambda - \omega) (2m - \| \zeta\|_{L^2}^2/2) &= (i\varepsilon(Q_m - 2m\partial_m Q_m), Q_m) \\
 &- (S''_m[Q_m]\zeta - 2 Re(\zeta) \zeta Q_m -|\zeta|^2 Q_m - |\zeta|^2 \zeta, Q_m) \\
 & \lesssim \| Q_m\|_{L^2}^2 \| \zeta\|_{H^1}^2 + \| Q_m\|_{L^2} \| \zeta\|_{H^1}^3 + \| \zeta\|_{H^1}^4 
 \end{aligned}
 \end{equation*}
 which leads, for $\varepsilon_0$ small enough, to the second inequality in \eqref{eq:Q_close}.
 \end{proof}

\begin{remark}
Let us remark that, while in Proposition \ref{thm:impl} the profile $Q_{m, \eps}$ is constructed as a first-order correction to $Q_m$, it may also be considered as an approximating dissipative soliton. In particular, $Q_{m, \eps}$ can be sought of as an approximation of $\tilde Q_{m(t)}(x)$, where the profile $\tilde Q_{m(t)}$ satisfies the equation 
\begin{equation*}
\partial_{xx}\tilde Q_{m}-\lambda_{m, \eps}\tilde Q_{m}
+|\tilde Q_{m}|^2\tilde Q_{m}
+\ii\eps\left(\tilde Q_{m}-2m\partial_m\tilde Q_{m}\right)=0,
\end{equation*}
so that $\psi(t, x)= e^{\ii\int_0^t\lambda_{m, \eps}ds}Q_{m(t)}(x)$ is a solution to \eqref{eq:Nls_damp}.
Notice that the rigorous construction of a profile for a similar dissipative NLS type dynamics was done in \cite{HaIbMa21}.
\end{remark}
We now show that the approximating profile $Q_{m,\varepsilon}$ can be viewed as the minimizer of a suitable functional, more precisely we define the modified action as follows
\begin{equation}\label{eq:action_new}
\begin{aligned}
 S_{m,\varepsilon}[u] &= E[u] + \lambda_{m,\varepsilon} M[u] - \varepsilon \left( \ii ( Q_m - 2 m \partial_m Q_m),u \right) \\
 & = \frac{1}2 \| \partial_x u\|_{L^2}^2 - \frac{1}4 \| u\|_{L^4}^4 + 
 \frac{\lambda_{m,\varepsilon}}{2}\|u\|_{L^2}^2 - \varepsilon \left( \ii ( Q_m - 2 m \partial_m Q_m),u \right).
\end{aligned}
\end{equation}
It is straightforward to see that \eqref{eq:Qmeps} is the Euler-Lagrange equation associated with the functional \eqref{eq:action_new}, namely $Q_{m, \eps}$ is a critical point for $S_{m, \eps}$, i.e.,
\begin{equation}\label{eq:qmecrt}
 S'_{m, \eps}[Q_{m, \eps}] = 0.
\end{equation}
Moreover, for $\eta \in H^1(\T)$, we have
\begin{equation}\label{eq:2derL}
 S''_{m,\varepsilon}[Q_{m,\varepsilon}] \eta = -\partial_{xx} \eta + \lambda_{m, \eps} \eta - |Q_{m,\varepsilon}|^2 \eta - 2Re(Q_{m,\varepsilon}\bar{\eta})Q_{m,\varepsilon}.
\end{equation}
By a perturbative argument, it is possible to show a coercivity property also for $S''_{m, \eps}[Q_{m, \eps}]$.
\begin{prop} \label{thm:pos1}
There exists $\varepsilon_1 >0$ such that for any $0 < \varepsilon < \varepsilon_1$, there exists a constant $C>0$ such that for any $m \in [0, \infty)$ and
	\begin{equation}\label{eq:X_sp1}
	\eta \in	\mathcal X_{m, \eps}= \left\{ u \in \A; \, (u,Q_{m,\varepsilon}) = (u,\ii Q_{m,\varepsilon}) = 0 \right\},
	\end{equation}
	we have
		\begin{equation} \label{eq:H1_pos1}
			 (S''_{m,\varepsilon}[Q_{m,\varepsilon}] \eta,\eta) \geq C\| \eta \|_{H^1}^2.
		\end{equation} 
\end{prop}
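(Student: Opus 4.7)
The plan is to prove Proposition \ref{thm:pos1} by a perturbative argument based on Proposition \ref{thm:pos}, exploiting the quantitative closeness $\|Q_{m,\varepsilon}-Q_m\|_{H^1}+|\lambda_{m,\varepsilon}-\omega|\lesssim\varepsilon\sqrt{m}$ established in \eqref{eq:Q_close}, together with the structural bounds $\|Q_m\|_{L^\infty},\|Q_m\|_{H^1}\lesssim\sqrt m$ coming from \eqref{eq:Qm} and $K(k)\to\pi/2$ as $k\to 0$.

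First, I would estimate the operator difference. Subtracting \eqref{eq:secvar} from \eqref{eq:2derL} gives
\begin{equation*}
(S''_{m,\varepsilon}[Q_{m,\varepsilon}]-S''_m[Q_m])\eta = (\lambda_{m,\varepsilon}-\omega)\eta - (|Q_{m,\varepsilon}|^2-Q_m^2)\eta - 2\mathrm{Re}(Q_{m,\varepsilon}\bar\eta)Q_{m,\varepsilon} + 2 Q_m \mathrm{Re}(\eta)Q_m,
\end{equation*}
whose coefficients are controlled by $|\lambda_{m,\varepsilon}-\omega|$ and by $\zeta=Q_{m,\varepsilon}-Q_m$. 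Using the Sobolev embedding $H^1(\T)\hookrightarrow L^\infty(\T)$, the factorization $|Q_{m,\varepsilon}|^2-Q_m^2=(|Q_{m,\varepsilon}|+|Q_m|)(|Q_{m,\varepsilon}|-|Q_m|)$, and \eqref{eq:Q_close}, I would derive
\begin{equation*}
\bigl| \bigl( (S''_{m,\varepsilon}[Q_{m,\varepsilon}]-S''_m[Q_m])\eta,\eta\bigr)\bigr| \leq K(m_0)\,\varepsilon\,\|\eta\|_{H^1}^{2}.
\end{equation*}

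Next, given $\eta\in\mathcal X_{m,\varepsilon}$, I would correct $\eta$ to land in $\mathcal X$. Writing $\tilde\eta=\eta+\alpha Q_m+\ii\beta Q_m$ with real $\alpha,\beta$ chosen to enforce $(\tilde\eta,Q_m)=(\tilde\eta,\ii Q_m)=0$, the conditions $(\eta,Q_{m,\varepsilon})=(\eta,\ii Q_{m,\varepsilon})=0$ translate via $\zeta$ into $(\eta,Q_m)=-(\eta,\zeta)$ and $(\eta,\ii Q_m)=-(\eta,\ii\zeta)$. By Cauchy--Schwarz and \eqref{eq:Q_close}, this yields $|\alpha|+|\beta|\lesssim \varepsilon\|\eta\|_{L^2}/\sqrt{m}$, and combining with $\|Q_m\|_{H^1}\lesssim\sqrt{m}$ gives $\|\tilde\eta-\eta\|_{H^1}\lesssim\varepsilon\|\eta\|_{H^1}$.

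Finally, I would combine the two ingredients. Proposition \ref{thm:pos} applied to $\tilde\eta\in\mathcal X$ gives $(S''_m[Q_m]\tilde\eta,\tilde\eta)\geq C\|\tilde\eta\|_{H^1}^{2}$, while the expansion
\begin{equation*}
(S''_m[Q_m]\eta,\eta) = (S''_m[Q_m]\tilde\eta,\tilde\eta) - 2(S''_m[Q_m]\tilde\eta,\alpha Q_m+\ii\beta Q_m) + (S''_m[Q_m](\alpha Q_m+\ii\beta Q_m),\alpha Q_m+\ii\beta Q_m),
\end{equation*}
together with the operator difference bound from the first step, yields
\begin{equation*}
(S''_{m,\varepsilon}[Q_{m,\varepsilon}]\eta,\eta)\geq C\|\tilde\eta\|_{H^1}^{2} - C'(m_0)\varepsilon\|\eta\|_{H^1}^{2}.
\end{equation*}
Since $\|\tilde\eta\|_{H^1}\geq(1-O(\varepsilon))\|\eta\|_{H^1}$, choosing $\varepsilon_1=\varepsilon_1(m_0)$ small enough absorbs the error and gives \eqref{eq:H1_pos1} with constant $C/2$. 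The main obstacle is the careful bookkeeping of the $m$-dependence in the operator difference estimate; the key observation that saves us is that $\|Q_m\|_{L^\infty}$, $\|\zeta\|_{L^\infty}$, and $|\lambda_{m,\varepsilon}-\omega|$ all scale with powers of $\sqrt m$ in a compatible way, so that relative errors are genuinely $O(\varepsilon)$ across the relevant range of masses.
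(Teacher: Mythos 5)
Your proposal is correct and follows essentially the same perturbative strategy as the paper: bound the operator difference $(S''_{m,\varepsilon}[Q_{m,\varepsilon}]-S''_m[Q_m])$ using \eqref{eq:Q_close}, then recover the coercivity of Proposition \ref{thm:pos} despite the orthogonality conditions being stated relative to $Q_{m,\varepsilon}$ rather than $Q_m$. The only (cosmetic) difference is in the second step: you correct $\eta$ by adding $\alpha Q_m+\ii\beta Q_m$ so as to land exactly in $\mathcal X$ and then invoke Proposition \ref{thm:pos} for the full quadratic form, whereas the paper decomposes the real and imaginary parts separately along $Q_m$ and estimates the small non-orthogonal components directly; both hinge on the same bounds $|(\eta,Q_m)|,|(\eta,\ii Q_m)|\lesssim\varepsilon\sqrt m\,\|\eta\|_{L^2}$ and $\|Q_m\|_{L^2}^2=2m$.
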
 
\begin{proof}
By recalling the definition of the second variation of the action $S_m$, see \eqref{eq:secvar}, we write 
\begin{equation*}
 \begin{aligned}
 & (S''_{m,\varepsilon}[Q_{m,\varepsilon}] \eta,\eta) = \left( S''_m[Q_m] \eta,\eta\right) + \left( (\lambda - \omega) \eta, \eta\right) \\
 & -\left(|Q_{m,\varepsilon}|^2 \eta + 2Re(Q_{m,\varepsilon}\bar{\eta})Q_{m,\varepsilon} - Q_m^2\eta - 2Re(\eta) Q_m^2, \eta \right).
 \end{aligned}
\end{equation*}
By using the estimates in \eqref{eq:Q_close}, we obtain
\begin{equation*}
\begin{aligned}
 &\left| \left((\lambda - \omega) \eta, \eta \right) \right| + 
 \left| \left(|Q_{m,\varepsilon}|^2 \eta + 2Re(Q_{m,\varepsilon}\bar{\eta})Q_{m,\varepsilon} - Q_m^2\eta - 2Re(\eta) Q_m^2, \eta \right) \right| \lesssim \varepsilon \| \eta\|_{L^2}^2,
\end{aligned}
\end{equation*}
which implies that there exists $C>0$ such that
\begin{equation*}
 (S''_{m,\varepsilon}[Q_{m,\varepsilon}] \eta,\eta) \geq (S''_m[Q_m] \eta,\eta) - C\varepsilon \| \eta\|_{L^2}^2.
\end{equation*}
We now claim that \eqref{eq:H1_pos1} follows from Proposition \ref{thm:pos}. Let us first consider the imaginary part $\eta_I$ of $\eta$. By writing
\begin{equation*}
 \eta_I = a Q_m + y
\end{equation*}
where $(y,Q_m) = 0$ and by exploiting Proposition \ref{thm:pos} we have that
\begin{equation*}
 (L_-\eta_I,\eta_I) = (L_-y,y) \geq K\| y\|_{L^2}^2 = K( \| \eta_I\|_{L^2}^2 - a^2 \| Q_m\|_{L^2}^2).
\end{equation*}
It is straightforward to see that 
\begin{equation*}
 a = \frac{(\eta_I,Q_m)}{\| Q_m\|_{L^2}^2}
\end{equation*}
so the previous bound yields
\begin{equation*}
 (L_-\eta_I,\eta_I) \geq K \left( \| \eta_I\|_{L^2}^2 - \frac{ (\eta_I,Q_m)^2}{\|Q_m\|_{L^2}^2} \right).
\end{equation*}
Since $\eta\in\mathcal X_{m, \eps}$, we have that
\begin{equation*}
 \left|(\eta_I,Q_m)\right| = \left|(\eta,\ii(Q_m - Q_{m,\varepsilon}))\right| \leq c\varepsilon\sqrt{m} \| \eta\|_{L^2},
\end{equation*}
where we used \eqref{eq:Q_close}. This bound readily implies
\begin{equation*}
 (L_-\eta_I,\eta_I) \geq K \left(1 - c^2\varepsilon^2 \right) \| \eta_I\|_{L^2}^2 - Kc^2\varepsilon^2 \| \eta_R\|_{L^2}^2
\end{equation*}
where $\eta_R$ denotes the real part of $\eta$.
With a similar procedure, we can also prove that the real part of $\eta$ satisfies
\begin{equation*}
 (L_+\eta_R,\eta_R) \geq K \left(1 - c^2\varepsilon^2 \right) \| \eta_R\|_{L^2}^2 - c^2\varepsilon^2 \| \eta_I\|_{L^2}^2
\end{equation*}
By choosing $0 <\varepsilon_1$ small enough, we obtain that 
\begin{equation*}
 (S''[Q_m] \eta, \eta) =(L_+\eta_R,\eta_R) + (L_-\eta_I,\eta_I) \geq \tilde{K} \| \eta\|_{L^2}^2 
\end{equation*}
for some $\tilde{K} > 0$ not depending on $m$. Then we can conclude as in Proposition \ref{thm:pos}.
\end{proof}

Our next step is to prove the stability of the family of approximating profiles
\begin{equation*}
 \mathcal F_\varepsilon = \left\{ Q_{m,\varepsilon}, \, m\in [0,\infty) \right\}.
\end{equation*}
Combining the stability of the family $\mathcal F_\varepsilon$ and inequalities \eqref{eq:Q_close} we will be able to prove also the stability of the family of cnoidal waves $\mathcal F$. \newline 
To do so, we decompose a solution $\psi$ as the sum of the modified profile and a perturbation $\eta \in H^1(\T)$ 
\begin{equation}\label{eq:decom_new}
 \psi(t,x) = \left( Q_{m(t), \varepsilon} + \eta(t,x) \right)e^{\ii \gamma(t)}
\end{equation}
where $m$ is defined in \eqref{eq:mass_diss} and $\gamma$ in \eqref{eq:gamma}. Observe that we have
\begin{equation}\label{eq:decodeco}
 Q_{m(t), \varepsilon} = Q_{m(t)} + \zeta(t), \quad \eta(t) = \xi(t) - \zeta(t),
\end{equation}
where $\xi$ is defined in \eqref{eq:decomCN} and $\zeta(t) = Q_{m(t), \varepsilon} - Q_{m(t)}$. Notice that we have chosen to keep $\gamma$ as the phase shift, instead of defining a new function $\varphi$ which minimizes the distance 
\begin{equation}\label{eq:defdis1}
d_{\mathcal F_\varepsilon}(\psi(t))=\inf_{\phi\in[0, 2\pi]}\|e^{-\ii \phi}\psi(t)-Q_{m(t),\varepsilon}\|_{L^2}=\|e^{-\ii \varphi(t)}\psi(t)-Q_{m(t),\varepsilon}\|_{L^2}.
\end{equation}
On one hand, by choosing $\varphi$ instead of $\gamma$ we would obtain the orthogonal condition $(\eta,\ii \Q) = 0$ as in Proposition \ref{prop:ort}, instead of an "almost" orthogonal condition in \eqref{eq:ortho_con1} below. On the other hand, we would be required to find a bound on $|\lambda_{m,\eps} - \dot \varphi|$ instead of $|\lambda_{m,\eps} - \dot \gamma|$. To find a bound on $|\lambda_{m,\eps} - \dot \varphi|$, it is required to prove that $\| \partial_m Q_{m,\eps} \|_{L^2} \lesssim \frac{1}{\sqrt{m}}$, see the discussion below Lemma \ref{lem:l-phi}. This is not an easy task because the $H^1$-control \eqref{eq:Q_close} on $Q_m - \Q$ does not guarantee a priori that $\partial_m Q_m$ and $\partial_m \Q$ are close in $L^2(\T)$. Instead, we will use Lemma \eqref{lem:l-gamma} and the second inequality in \eqref{eq:Q_close} to bound the second difference. This second approach requires to know only that $\| \partial_m Q_m \|_{L^2} \lesssim \frac{1}{\sqrt{m}}$ which is proved in the Appendix. \newline
Analogously to Proposition \ref{prop:ort}, we now state some basic properties for $\eta$. 

\begin{prop}\label{prop:ort1}
Let $\psi_0 \in \A$ with $M[\psi_0]=m_0$, $\psi\in C([0,\infty), \A) $ be its associated solution, $m(t)$ be defined as in \eqref{eq:mass_diss}. Then for any $t\geq0$, we have 
\begin{equation}\label{eq:ortho_con1}
		(\eta(t, \cdot),\ii Q_{m(t),\varepsilon}) = (\eta(t, \cdot), \ii \zeta(t, \cdot)) \lesssim \varepsilon \| \eta(t) \|_{L^2} \sqrt{m(t)} , 
\end{equation}
\begin{equation}\label{eq:ortho_con_21}
 (\eta(t, \cdot), Q_{m(t),\varepsilon}) = - M[\eta(t)].
\end{equation}
\end{prop}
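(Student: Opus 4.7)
The plan is to derive both identities directly from the mass preservation property $M[\psi(t)]=m(t)$ together with the orthogonality built into the construction of $Q_{m,\varepsilon}$ in Proposition \ref{thm:impl}. The decomposition \eqref{eq:decodeco} linking $\eta$ to the old perturbation $\xi$ via $\eta=\xi-\zeta$ (with $\zeta=Q_{m,\varepsilon}-Q_m$) is what allows us to import the orthogonality relations already established in Proposition \ref{prop:ort}.

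For the second identity \eqref{eq:ortho_con_21}, I would simply expand $\|\psi(t)\|_{L^2}^2=\|Q_{m(t),\varepsilon}+\eta(t)\|_{L^2}^2$ using \eqref{eq:decom_new}. Since Proposition \ref{thm:impl} guarantees $M[Q_{m,\varepsilon}]=m$ (this was imposed as a constraint when setting up the map $G$ in \eqref{eq:F_big}) and the dissipative dynamics give $M[\psi(t)]=m(t)$ by \eqref{eq:mass_diss}, the quadratic cross term must compensate the $\|\eta\|_{L^2}^2$ contribution, giving $(\eta,Q_{m,\varepsilon})=-\tfrac{1}{2}\|\eta\|_{L^2}^2=-M[\eta]$.

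For the first identity \eqref{eq:ortho_con1}, I would write $(\eta,\ii Q_{m,\varepsilon})=(\eta,\ii Q_m)+(\eta,\ii\zeta)$ and show that the first term vanishes. Using $\eta=\xi-\zeta$, this reduces to $(\xi,\ii Q_m)-(\zeta,\ii Q_m)$. The first piece is zero by \eqref{eq:ortho_con}, and the second piece is zero because $Q_{m,\varepsilon}\in\mathcal{Y}$ by construction (see \eqref{eq:mathY}), which means $(Q_{m,\varepsilon},\ii Q_m)=0$, and of course $(Q_m,\ii Q_m)=0$ since $Q_m$ is real-valued, so $(\zeta,\ii Q_m)=0$. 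The quantitative bound on $(\eta,\ii\zeta)$ is then just Cauchy--Schwarz combined with the $H^1$-smallness $\|\zeta\|_{L^2}\le\|\zeta\|_{H^1}\lesssim \varepsilon\sqrt{m}$ from \eqref{eq:Q_close}.

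No genuine obstacle here: both assertions are essentially bookkeeping consequences of the construction. The only subtle point is recognizing that the non-exact orthogonality $(\eta,\ii Q_{m,\varepsilon})\ne 0$ arises precisely because we have chosen to keep the \emph{old} phase modulator $\gamma$ (minimizing the distance to $Q_{m}$) rather than introducing a new phase $\varphi$ adapted to $Q_{m,\varepsilon}$; the error incurred is controlled by the $H^1$-distance between $Q_m$ and $Q_{m,\varepsilon}$, which is $O(\varepsilon\sqrt{m})$ by Proposition \ref{thm:impl}, and this is exactly the right-hand side appearing in \eqref{eq:ortho_con1}.
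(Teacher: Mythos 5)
Your proof is correct and follows essentially the same route as the paper: both derive \eqref{eq:ortho_con_21} from the equality of masses $M[Q_{m,\varepsilon}]=M[\psi]=m$, and both obtain \eqref{eq:ortho_con1} by combining $(\xi,\ii Q_m)=0$ from \eqref{eq:ortho_con} with the fact that $Q_{m,\varepsilon}\in\mathcal Y$ forces $(\zeta,\ii Q_m)=0$, then bounding $(\eta,\ii\zeta)$ via Cauchy--Schwarz and \eqref{eq:Q_close}. The only difference is cosmetic (you expand $\ii Q_{m,\varepsilon}=\ii Q_m+\ii\zeta$ first, while the paper expands both factors at once), so nothing further is needed.
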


\begin{proof}
As in Proposition \ref{prop:ort}, equation \eqref{eq:ortho_con_21} follows from $M[Q_{m,\eps}] = M[\psi]$. In order to prove \eqref{eq:ortho_con1}, we observe that $Q_{m,\eps} \in \mathcal{Y}$ where $\mathcal{Y}$ is defined in \eqref{eq:mathY} and thus 
 \begin{equation*}
 0 = (Q_{m,\eps}, \ii Q_m) = (Q_m + \zeta, \ii Q_m) = (\zeta, \ii Q_m).
 \end{equation*}
 Consequently, by using the orthogonal condition \eqref{eq:ortho_con}, we have that
 \begin{equation*}
 \begin{aligned}
 (\eta, \ii Q_{m,\eps}) = (\xi - \zeta, \ii(Q_m + \zeta)) = (\xi, \ii \zeta) = (\eta, \ii \zeta) \lesssim \varepsilon\| \eta\|_{L^2} \sqrt{m} 
 \end{aligned}
 \end{equation*}
 where we have used \eqref{eq:Q_close} in the last inequality.
\end{proof}
Let us remark that, as for $\xi$, identity \eqref{eq:ortho_con_21} and the condition on the total mass of $Q_{m(t), \eps}$ straightforwardly yields
\begin{equation}\label{eq:mass_eta}
\|\eta(t)\|_{L^2}\lesssim\sqrt{m(t)}. 
\end{equation}
By considering the modified action $S_{m, \eps}$ in \eqref{eq:action_new}, we accordingly define the following Lyapunov functional
\begin{equation}\label{eq:lyapnew}
\begin{aligned}
\mathcal{L}_{m,\varepsilon}[\eta(t)]=& S_{m,\varepsilon}[Q_{m(t),\varepsilon} + \eta(t)] - S_{m,\varepsilon}[Q_{m(t),\varepsilon}]\\
=&S_{m, \eps}[e^{-\ii\gamma(t)}\psi(t)]-S_{m, \eps}[Q_{m, \eps}].
\end{aligned}
\end{equation}
Let us notice the slight difference in the definition of $\mathcal L_{m, \eps}$ with respect to $\mathcal L$, defined in \eqref{eq:Lyap}. This is because, contrarily to \eqref{eq:sol_wave}, the equation for the profile $Q_{m, \eps}$ is no longer invariant by phase shifts, because of the forcing term.
Analogously to Section \ref{sec:first}, we first show that $\mathcal L_{m, \eps}$ controls the $H^1$-norm of the perturbation $\eta$.
\begin{lem}\label{thm:L_new_eq} 
		Let $\psi_0 \in \A$, $\psi\in C([0,\infty), \A) $ be its corresponding solution and let $ \eta \in C([0,\infty), \A)$ be defined as in \eqref{eq:decom_new}. 
		There exists $\varepsilon_2>0, \delta_0 > 0$ such that if $0<\varepsilon < \varepsilon_2$ and $\| \eta(t)\|_{H^1} \leq \delta_0 e^{-\eps t}$, then we have
		\begin{equation} \label{eq:L_new_eq} 
			\|\eta(t) \|_{H^1}^2 \lesssim \mathcal{L}_{m,\varepsilon}[\eta(t)].
		\end{equation}
\end{lem}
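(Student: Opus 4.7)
The plan is to mimic the proof of Lemma \ref{thm:L_equivalence}, now centering the analysis around the corrected profile $Q_{m,\varepsilon}$ and exploiting the coercivity from Proposition \ref{thm:pos1}. The first step is a Taylor expansion of the modified action functional \eqref{eq:action_new}. Since $Q_{m,\varepsilon}$ is a critical point of $S_{m,\varepsilon}$ by \eqref{eq:qmecrt}, the linear term in $\eta$ disappears and we obtain
\begin{equation*}
\mathcal{L}_{m,\varepsilon}[\eta] = \tfrac{1}{2}\bigl(S''_{m,\varepsilon}[Q_{m,\varepsilon}]\eta,\eta\bigr) + R(\eta),
\end{equation*}
where $R(\eta)$ gathers the cubic and quartic contributions arising from $-\tfrac{1}{4}\|u\|_{L^4}^4$ (all other terms in $S_{m,\varepsilon}$ being at most quadratic). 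Using that $Q_{m,\varepsilon}$ is bounded in $L^\infty$ uniformly in $m\in[0,m_0]$ by \eqref{eq:Q_close}, Sobolev embedding yields $|R(\eta)|\lesssim \|\eta\|_{H^1}^3+\|\eta\|_{H^1}^4$.

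The second step is to reduce the quadratic form to something on which Proposition \ref{thm:pos1} applies. Following the strategy of Lemma \ref{thm:L_equivalence}, I decompose $\eta = aQ_{m,\varepsilon} + \ii b Q_{m,\varepsilon} + \tilde y$, with $\tilde y \in \mathcal X_{m,\varepsilon}$. By Proposition \ref{prop:ort1}, the coefficients satisfy
\begin{equation*}
a = \frac{(\eta,Q_{m,\varepsilon})}{\|Q_{m,\varepsilon}\|_{L^2}^2}=-\frac{\|\eta\|_{L^2}^2}{4m}, \qquad |b| = \frac{|(\eta,\ii Q_{m,\varepsilon})|}{\|Q_{m,\varepsilon}\|_{L^2}^2}\lesssim \frac{\varepsilon\|\eta\|_{L^2}}{\sqrt{m}},
\end{equation*}
so in particular $\|\tilde y\|_{L^2}^2 = \|\eta\|_{L^2}^2 - 2m(a^2+b^2)$. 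Expanding the bilinear form by linearity produces the diagonal term $(S''_{m,\varepsilon}\tilde y,\tilde y)\geq C\|\tilde y\|_{H^1}^2$ given by Proposition \ref{thm:pos1}, plus cross terms of the form $a(S''_{m,\varepsilon}Q_{m,\varepsilon},\tilde y)$, $b(S''_{m,\varepsilon}\ii Q_{m,\varepsilon},\tilde y)$, and pure quadratic terms in $a,b$. Since $|a|\lesssim \|\eta\|_{L^2}^2/m$ and $|b|\lesssim \varepsilon\|\eta\|_{L^2}/\sqrt m$, and since the quantities $S''_{m,\varepsilon}Q_{m,\varepsilon}$, $S''_{m,\varepsilon}\ii Q_{m,\varepsilon}$ can be controlled in $L^2$ using the Euler–Lagrange equation \eqref{eq:Qmeps} together with \eqref{eq:Q_close} and the bound $\|\partial_m Q_m\|_{L^2}\lesssim m^{-1/2}$, these contributions are either of size $\varepsilon\|\eta\|_{L^2}^2$ or of cubic size $\|\eta\|_{L^2}^3/\sqrt m$. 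Using the smallness assumption $\|\eta\|_{H^1}\leq\delta_0 e^{-\varepsilon t}$ together with $\sqrt{m(t)} = \sqrt{m_0}\,e^{-\varepsilon t}$, both error types can be made arbitrarily small by choosing $\delta_0$ and $\varepsilon_2$ small enough, which allows absorbing them into the leading coercivity and into the cubic Taylor remainder $R(\eta)$.

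The third step yields the $L^2$ coercivity $\mathcal{L}_{m,\varepsilon}[\eta] \gtrsim \|\eta\|_{L^2}^2$, which is then promoted to the desired $H^1$ bound by the same convex-combination trick used at the end of Proposition \ref{thm:pos} (writing $(S''_{m,\varepsilon}\eta,\eta) = c(S''_{m,\varepsilon}\eta,\eta) + (1-c)(S''_{m,\varepsilon}\eta,\eta)$ and using the $L^\infty$ bound on the potential $\lambda_{m,\varepsilon} - 3|Q_{m,\varepsilon}|^2$, which is uniform in $m\in[0,m_0]$ thanks to \eqref{eq:Q_close}). The main obstacle is the second step: unlike in Lemma \ref{thm:L_equivalence}, the decomposition is no longer exactly orthogonal, since $(\eta,\ii Q_{m,\varepsilon})$ fails to vanish by $O(\varepsilon\sqrt m \|\eta\|_{L^2})$. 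Handling the resulting non-self-adjoint cross terms $b(S''_{m,\varepsilon}\ii Q_{m,\varepsilon}, \tilde y)$ requires carefully identifying $S''_{m,\varepsilon}\ii Q_{m,\varepsilon}$ via \eqref{eq:Qmeps}, observing that it equals $-\varepsilon(Q_m-2m\partial_m Q_m)$ plus a term that cancels, so it is itself of order $\varepsilon\sqrt m$ in $L^2$; this is what ultimately forces the restriction $\varepsilon<\varepsilon_2$ in the statement.
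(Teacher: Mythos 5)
Your proposal is correct and follows essentially the same route as the paper: the same Taylor expansion of $S_{m,\varepsilon}$ around its critical point $Q_{m,\varepsilon}$, the same decomposition $\eta = aQ_{m,\varepsilon}+\ii bQ_{m,\varepsilon}+y$ with $y\in\mathcal X_{m,\varepsilon}$, the same identification of $S''_{m,\varepsilon}[Q_{m,\varepsilon}]Q_{m,\varepsilon}$ and $S''_{m,\varepsilon}[Q_{m,\varepsilon}](\ii Q_{m,\varepsilon})=-\varepsilon(Q_m-2m\partial_mQ_m)$ via the Euler--Lagrange equation \eqref{eq:Qmeps}, and the same final convex-combination upgrade from $L^2$ to $H^1$ coercivity. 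The error bookkeeping (cross terms of size $\varepsilon\|\eta\|_{L^2}^2$ or cubic with a factor $m^{-1/2}$, absorbed using $\|\eta\|_{H^1}\leq\delta_0 e^{-\varepsilon t}\lesssim\sqrt{m(t)}$) matches the paper's estimates.
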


\begin{proof}
By using equation \eqref{eq:Qmeps} and identity \eqref{eq:2derL}, it is possible to check that
		\begin{equation}\label{eq:L_new_dec}
\begin{aligned}
\mathcal{L}_{m,\varepsilon}[\eta(t)]=& 
\frac{1}{2} (S_{m,\varepsilon}''[Q_{m(t),\varepsilon}]\eta(t),\eta(t)) 
-\int\frac14|\eta|^4-|\eta|^2Re(\overline{Q}_{m, \eps}\eta)\,dx\\
=& \frac{1}{2} (S_{m,\varepsilon}''[Q_{m(t),\varepsilon}]\eta(t),\eta(t)) + R(\eta(t)),
\end{aligned}
		\end{equation} 
so that
 \begin{equation}\label{eq:L_new_cub}
 \left| R(\eta) \right| =\left|\int\frac14|\eta|^4+|\eta|^2Re(\bar\eta Q_{m, \eps})\,dx\right|
\lesssim \| \eta\|_{H^1}^3 + \| \eta\|_{H^1}^4.
 \end{equation}
Let us now prove that 
 \begin{equation*}
 (S_{m,\varepsilon}''[Q_{m(t),\varepsilon}]\eta(t),\eta(t)) \gtrsim \| \eta(t) \|_{H^1}^2.
\end{equation*}
We decompose $\eta$ as
\begin{equation}\label{eq:etadeko}
\eta = a Q_{m,\varepsilon} + \ii b Q_{m,\varepsilon} + y,
\end{equation}
where $y\in\mathcal X_{m, \eps}$ defined in \eqref{eq:X_sp1}. By using \eqref{eq:ortho_con_21} and \eqref{eq:ortho_con_2}, we deduce that
\begin{equation*}
a = \frac{(\eta, Q_{m,\varepsilon})}{\| Q_{m,\eps}\|_{L^2}^2} = - \frac{1}2 \frac{\| \eta\|_{L^2}^2}{\| Q_{m,\eps}\|_{L^2}^2} \quad b = \frac{(\eta, \ii Q_{m,\varepsilon})}{\| Q_{m,\eps}\|_{L^2}^2} = \frac{(\eta, \ii \zeta)}{\| Q_{m,\eps}\|_{L^2}^2}.
\end{equation*}
Moreover, by Proposition \ref{thm:pos1} we have
\begin{equation}\label{eq:yProp}
(S_{m,\varepsilon}''[Q_{m(t),\varepsilon}]y,y) \gtrsim \| y \|_{H^1}^2.
\end{equation}
We now claim that a similar bound holds for $\eta$, too. This fact may be proved by reproducing the same steps of the proof of Lemma \ref{thm:L_equivalence}. 
By using decomposition \eqref{eq:etadeko}, we can write
\begin{equation}\label{eq:scalPr}
 \begin{aligned}
 (S_{m,\varepsilon}''[Q_{m,\varepsilon}]\eta,\eta) &= (S_{m,\varepsilon}''[Q_{m,\varepsilon}]y,y) - a^2 (S_{m,\varepsilon}''[Q_{m,\varepsilon}]Q_{m,\eps},Q_{m,\eps}) \\
 & - b^2 (S_{m,\varepsilon}''[Q_{m,\varepsilon}] (\ii Q_{m,\eps}),\ii Q_{m,\eps}) - 2 ab (S_{m,\varepsilon}''[Q_{m,\varepsilon}] (\ii Q_{m,\eps}), Q_{m,\eps}) \\
 &+ 2 a (S_{m,\varepsilon}''[Q_{m,\varepsilon}]Q_{m,\eps},\eta) + 2 b(S_{m,\varepsilon}''[Q_{m,\varepsilon}] (\ii Q_{m,\eps}),\eta).
 \end{aligned}
\end{equation}
We now analyze the contributions provided by each term in the identity above. 
Observe that 
\begin{equation}\label{eq:scalPr0}
 \| y\|_{L^2}^2 = \| \eta \|_{L^2}^2 - \frac{1}4 \frac{\| \eta\|_{L^2}^4}{\| Q_{m,\eps}\|_{L^2}^2} - \frac{(\eta, \ii \zeta)^2}{\| Q_{m,\eps}\|_{L^2}^2}.
\end{equation}
By plugging \eqref{eq:scalPr0} into \eqref{eq:yProp}, we bound the first term on the right-hand side of \eqref{eq:scalPr} as
\begin{equation*}
\begin{aligned}
 (S_{m,\varepsilon}''[Q_{m,\varepsilon}]y,y) & \geq C \| y\|_{L^2}^2 \geq C \left( \| \eta \|_{L^2}^2 - \frac{1}4 \frac{\| \eta\|_{L^2}^4}{\| Q_{m,\eps}\|_{L^2}^2} - \frac{(\eta, \ii \zeta)^2}{\| Q_{m,\eps}\|_{L^2}^2} \right) \\
 &\geq C \left( 1- \frac{\| \eta\|_{L^2}^2}{4\| Q_{m,\eps}\|_{L^2}^2} - \frac{\| \zeta\|_{L^2}^2 }{\| Q_{m,\eps}\|_{L^2}^2} \right) \| \eta\|_{L^2}^2.
\end{aligned}
\end{equation*}
To bound the second term in \eqref{eq:scalPr}, we observe that \eqref{eq:F_eq} implies that 
\begin{equation*}
 S_{m,\varepsilon}''[\Q] \Q = -2 |\Q|^2 \Q + \ii \eps (Q_m - 2m\partial_m Q_m),
\end{equation*}
from which it follows that
\begin{equation*}
 \begin{aligned}
 -a^2 (S_{m,\varepsilon}''[Q_{m,\varepsilon}]Q_{m,\eps},Q_{m,\eps}) &= \left( -\frac{\| \eta\|_{L^2}^2}{2\| \Q\|_{L^2}^2}\right)^2 (2 |\Q|^2 \Q - \ii \eps (Q_m - 2m\partial_m Q_m), \Q) \\ 
 & = \frac{\| \eta\|_{L^2}^4}{4 \| \Q\|_{L^2}^4} \left( 2\| \Q\|_{L^4}^4 - \eps (\ii (Q_m - 2m\partial_mQ_m), \Q) \right) \\
 & = \frac{\| \Q\|_{L^4}^4}{2 \| \Q\|_{L^2}^4} \| \eta\|_{L^2}^4 + 2m\eps ( \ii \partial_m Q_m, \Q)\frac{\| \eta\|_{L^2}^2 }{4 \| \Q\|_{L^2}^4} \| \eta\|_{L^2}^2 \\
 & \geq \frac{\| \Q\|_{L^4}^4}{2 \| \Q\|_{L^2}^4} \| \eta\|_{L^2}^4 - \varepsilon c_1 \frac{ \| \eta\|_{L^2}^2 }{ 4 \| \Q\|_{L^2}^2} \| \eta\|_{L^2}^2
 \end{aligned}
\end{equation*}
where we used that $\Q \in \mathcal{Y}$, where $\mathcal{Y}$ is defined in \eqref{eq:mathY} and that $\| \partial_m Q_m\|_{L^2} \| \Q\|_{L^2} \leq c_1$ for some $c_1 >0$, see \eqref{eq:dm_Qm} in the Appendix. 
In the same way, from 
\begin{equation*}
 S''_{m,\eps}[\Q] (\ii \Q) = - \eps (Q_m - 2m\partial_m Q_m),
\end{equation*}
we also have that 
\begin{equation*}
 \begin{aligned}
 - b^2 (S_{m,\varepsilon}''[Q_{m,\varepsilon}] (\ii Q_{m,\eps}),\ii Q_{m,\eps}) & = \eps\frac{(\eta, \ii \zeta)^2}{\| Q_{m,\eps}\|_{L^2}^4} ( (Q_m - 2m\partial_m Q_m), \ii \Q) \\
 & = \eps\frac{(\eta, \ii \zeta)^2}{\| Q_{m,\eps}\|_{L^2}^2} ( \partial_m Q_m, \ii \Q) \\
 & \geq -\varepsilon c_1 \frac{\| \zeta \|_{L^2}^2}{\| \Q\|_{L^2}^2} \| \eta\|_{L^2}^2.
 \end{aligned}
\end{equation*}
For the fourth term in \eqref{eq:scalPr}, we notice that by multiplying equation \eqref{eq:Qmeps} by $\Q$, integrating and taking imaginary part, we have 
\begin{equation*}
 (\varepsilon(Q_m - 2m\partial_m Q_m), \Q) = 0,
\end{equation*}
which readily implies that
\begin{equation*}
 2 ab (S_{m,\varepsilon}''[Q_{m,\varepsilon}] (\ii Q_{m,\eps}), Q_{m,\eps}) = 2ab (-\varepsilon(Q_m - 2m\partial_m Q_m), \Q) = 0
\end{equation*}
For the last two terms in \eqref{eq:scalPr}, we observe that 
\begin{equation*}
 \begin{aligned}
 2a (S_{m,\varepsilon}''[Q_{m,\varepsilon}]Q_{m,\eps},\eta) &= \frac{\| \eta\|_{L^2}^2}{\| Q_{m,\eps}\|_{L^2}^2} ( 2 |\Q|^2 \Q - \ii \eps (Q_m - 2m\partial_m Q_m),\eta) \\
 & \geq - 2 \frac{\| \eta\|_{L^2}^3 \| \Q\|_{L^\infty}^2}{\| Q_{m,\eps}\|_{L^2}} - \varepsilon c_2 \| \eta \|_{L^2}^2
 \end{aligned} 
\end{equation*}
and 
\begin{equation*}
\begin{aligned}
 2b (S_{m,\varepsilon}''[Q_{m,\varepsilon}] (\ii Q_{m,\eps}),\eta) &= - \eps \frac{2(\eta, \ii \zeta)}{\| Q_{m,\eps}\|_{L^2}^2} ((Q_m - 2m \partial_m Q_m),\eta) \\ 
 &\geq -2 c_2\varepsilon \| \eta \|_{L^2}^2
\end{aligned}
\end{equation*}
where we used \eqref{eq:Q_close} and that there exists a constant $c_2>0$ such that
\begin{equation*}
 \| Q_m - 2m\partial_m Q_m\|_{L^2} \leq c_2\sqrt{m},
\end{equation*}
see \eqref{eq:dm_Qm} in the Appendix. 
Putting everything together we obtain that there exists $C_1 >0$ such that
\begin{equation}\label{eq:scalPr2}
 \begin{aligned}
 (S''_{m,\eps} [\Q] \eta, \eta) &\geq C \left(1- C_1 \eps - \frac{ (1 + C_1 \varepsilon) \| \eta\|_{L^2}^2}{4\| Q_{m,\eps}\|_{L^2}^2} - \frac{(1 + \varepsilon C_1) \| \zeta\|_{L^2}^2 }{\| Q_{m,\eps}\|_{L^2}^2} \right) \| \eta\|_{L^2}^2 \\ 
 & - 2 \frac{\| \eta\|_{L^2}^3 \| \Q\|_{L^\infty}^2}{\| Q_{m,\eps}\|_{L^2}} + \frac{\| \Q\|_{L^4}^4}{2 \| \Q\|_{L^2}^4} \| \eta\|_{L^2}^4.
 \end{aligned}
\end{equation}
To conclude, we observe from \eqref{eq:Q_close} that we have
\begin{equation*}
 \frac{ \| \zeta\|_{L^2}^2 }{\| Q_{m,\eps}\|_{L^2}^2} \lesssim \varepsilon,
\end{equation*}
and also that the last two terms can be bounded as 
\begin{equation*}
 \left|- 2 \frac{\| \eta\|_{L^2}^3 \| \Q\|_{L^\infty}^2}{\| Q_{m,\eps}\|_{L^2}} + \frac{\| \Q\|_{L^4}^4}{2 \| \Q\|_{L^2}^4} \| \eta\|_{L^2}^4 \right|\leq C_3 \left(\| \eta\|_{L^2}^3 + \| \eta\|_{L^2}^4 \right).
\end{equation*}
where $C_3$ is not depending on $m$.
Again, as in Lemma \ref{thm:L_equivalence}, by using \eqref{eq:L_new_dec}, \eqref{eq:L_new_cub} and \eqref{eq:scalPr2}, we may prove that, by choosing $\delta_0>0$ and $\varepsilon_2 >0$ sufficiently small, we obtain that
\begin{equation*}
\mathcal{L}_{m,\varepsilon}[\eta(t)] \geq K(\| \eta\|_{H^1}^2 + R(\eta)) \geq \frac{K}2 \| \eta\|_{H^1}^2,
\end{equation*}
for $\|\eta\|_{H^1}\leq\delta_0 e^{-\eps t}$.
\end{proof}

We now show that the modified Lyapunov functional $\mathcal L_{m, \eps}$ satisfies a better bound with respect to estimate \eqref{eq:weak_L_law} derived for $\mathcal L$, defined in \eqref{eq:Lyap}.
We recall that the introduction of the approximating profile $Q_{m, \eps}$ and the definition of $\mathcal L_{m, \eps}$ stemmed from the necessity to improve on estimate \eqref{eq:weak_L_law}. To justify why we expect a better control on $\mathcal L_{m, \eps}$, we may compare \eqref{eq:xi} with the equation for the perturbation $\eta$. By using \eqref{eq:decom_new}, \eqref{eq:Nls_damp} and \eqref{eq:Qmeps}, we see that $\eta$ satisfies
\begin{equation}\label{eq:eta}
\begin{aligned}
\ii \partial_t\eta&+\d_{xx}\eta-\lambda_{m, \eps}\eta
+|Q_{m, \eps}+\eta|^2(Q_{m, \eps}+\eta)-|Q_{m, \eps}|^2Q_{m, \eps}+\ii\eps\eta\\
&+(\lambda_{m, \eps}-\dot\gamma)(Q_{m, \eps}+\eta)
 +\ii\eps(Q_{m, \eps}-Q_m-2m\partial_m(Q_{m, \eps}-Q_m))=0.
\end{aligned}
\end{equation}
We now see that the forcing term, namely the last term in the equation above, is of order $\eps^2$, as we have
\begin{equation*}
		\begin{split}
			&\|	\varepsilon(Q_m - 2m\partial_m Q_m - Q_{m,\varepsilon} + 2m \partial_m Q_{m,\varepsilon}) \|_{H^1} \\ & \lesssim \varepsilon \left( m\| \partial_m Q_{m,\varepsilon} - \partial_m Q_m\|_{H^1} + \| Q_{m,\varepsilon} - Q_m\|_{H^1} \right) \lesssim \varepsilon^2.
		\end{split}
\end{equation*}

In what follows we provide a Gronwall-type estimate for $\mathcal L_{m, \eps}[\eta(t)]$. As it will be clear from the proof of Proposition \ref{thm:L_bound} below, the improvement with respect to \eqref{eq:weak_L_law} lies in the fact that the linear terms appearing in the time derivative of $\mathcal L_{m, \eps}$ can be estimated by $ \varepsilon^2 m \sqrt{\mathcal{L}_{m,\varepsilon}[\eta(t)]}$. This should be compared with the second term on the right-hand side of \eqref{eq:L_der}, which was bounded by $ \varepsilon m \sqrt{\mathcal{L}_{m,\varepsilon}[\eta(t)]} $.
On the other hand, there is an additional difficulty in estimating the time derivative of $\mathcal L_{m, \eps}$, which is given by providing a suitable control on the phase shift in \eqref{eq:decom_new}. As previously said, this is because the equation for the approximated profile $Q_{m, \eps}$ is no longer invariant by phase shifts. We overcome this difficulty by providing an estimate for $|\lambda_{m, \eps}-\dot \gamma|$, see \eqref{eq:l-phi} below.

\begin{lem}\label{lem:l-phi}
There exists $\delta_1 >0$ and $\varepsilon_3 >0$ such that if $\|\eta(t)\|_{L^2} < \delta_1 e^{-\varepsilon t}$ and $\varepsilon < \varepsilon_3$ then we have
\begin{equation}\label{eq:l-phi}
|\lambda_{m, \eps}(t)-\dot\gamma(t)|\lesssim
\eps+\|\eta(t)\|_{H^1}.
\end{equation}
\end{lem}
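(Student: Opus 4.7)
My plan is to reduce the estimate for $|\lambda_{m,\varepsilon} - \dot\gamma|$ to the analogous estimate for $|\omega - \dot\gamma|$ already established in Lemma \ref{lem:l-gamma}, rather than redo from scratch the weighted $L^2$ scalar-product computation on the equation \eqref{eq:eta}. The motivation for avoiding the direct approach is precisely the obstruction flagged by the authors in the paragraph following \eqref{eq:decodeco}: taking the real inner product of \eqref{eq:eta} with $Q_{m,\varepsilon}$ and integrating by parts in time would produce $\partial_t Q_{m,\varepsilon}=\dot m\,\partial_m Q_{m,\varepsilon}$ and hence force an $L^2$-control of $\partial_m Q_{m,\varepsilon}$, which is not inherited for free from the $H^1$-estimate $\|Q_m-Q_{m,\varepsilon}\|_{H^1}\lesssim \varepsilon\sqrt{m}$ supplied by Proposition \ref{thm:impl}.

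The starting observation is the triangle inequality
\begin{equation*}
|\lambda_{m,\varepsilon}(t) - \dot\gamma(t)| \leq |\omega(m(t)) - \dot\gamma(t)| + |\omega(m(t)) - \lambda_{m,\varepsilon}(t)|.
\end{equation*}
The second term is immediately controlled by the second bound in \eqref{eq:Q_close}, yielding $|\omega - \lambda_{m,\varepsilon}|\lesssim \varepsilon\sqrt{m}\leq \varepsilon\sqrt{m_0}\lesssim \varepsilon$.

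For the first term I intend to invoke Lemma \ref{lem:l-gamma} applied to the original perturbation $\xi=\eta+\zeta$ coming from \eqref{eq:decodeco}, so the first step is to verify that its hypothesis $\|\xi(t)\|_{L^2}<\delta^{(1)}e^{-\varepsilon t}$ holds. Using $\|\zeta(t)\|_{L^2}\leq\|\zeta(t)\|_{H^1}\lesssim \varepsilon\sqrt{m_0}\,e^{-\varepsilon t}$ from \eqref{eq:Q_close} together with the running hypothesis $\|\eta(t)\|_{L^2}<\delta_1 e^{-\varepsilon t}$, one obtains
\begin{equation*}
\|\xi(t)\|_{L^2}\leq \|\eta(t)\|_{L^2}+\|\zeta(t)\|_{L^2}\leq \bigl(\delta_1+C\varepsilon\sqrt{m_0}\bigr)\,e^{-\varepsilon t}.
\end{equation*}
The parameters $\delta_1$ and $\varepsilon_3$ will then be chosen small enough, depending on $m_0$ and on the universal constant $\delta^{(1)}$ supplied by Lemma \ref{lem:l-gamma}, so that $\delta_1+C\varepsilon_3\sqrt{m_0}<\delta^{(1)}$ for all $\varepsilon<\varepsilon_3$, ensuring that Lemma \ref{lem:l-gamma} is available.

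Lemma \ref{lem:l-gamma} then delivers $|\omega-\dot\gamma|\lesssim \varepsilon + \|\xi(t)\|_{H^1}$, and a further application of the triangle inequality together with \eqref{eq:Q_close} gives
\begin{equation*}
\|\xi(t)\|_{H^1}\leq \|\eta(t)\|_{H^1}+\|\zeta(t)\|_{H^1}\lesssim \|\eta(t)\|_{H^1}+\varepsilon.
\end{equation*}
Combining this with the bound on $|\omega-\lambda_{m,\varepsilon}|$ produces exactly \eqref{eq:l-phi}. I do not anticipate any genuine obstacle beyond the bookkeeping of how $\delta_1$ and $\varepsilon_3$ depend on $m_0$ in order to satisfy the hypothesis of Lemma \ref{lem:l-gamma}; no new coercivity, spectral, or orthogonality input is required, which is what makes this route strictly cheaper than redoing the scalar-product computation directly.
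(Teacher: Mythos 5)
Your proposal is correct and follows essentially the same route as the paper: both reduce the bound to Lemma \ref{lem:l-gamma} via the triangle inequality $|\lambda_{m,\varepsilon}-\dot\gamma|\leq|\lambda_{m,\varepsilon}-\omega|+|\omega-\dot\gamma|$, verify the hypothesis $\|\xi\|_{L^2}<\delta^{(1)}e^{-\varepsilon t}$ by writing $\xi=\eta+\zeta$ and using \eqref{eq:Q_close}, and then convert $\|\xi\|_{H^1}$ into $\|\eta\|_{H^1}+O(\varepsilon)$. Your additional remark about why the direct scalar-product computation is avoided matches the authors' own discussion following \eqref{eq:decodeco}.
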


\begin{proof}
This bound follows from Lemma \ref{lem:l-gamma} and the second inequality in \eqref{eq:Q_close}. Indeed, Lemma \ref{lem:l-gamma} implies that there exists $\delta^{(1)} >0$ such that for $\| \xi(t)\|_{L^2} <\delta^{(1)} e^{- \varepsilon t}$, we have that 
\begin{equation}\label{eq:omega-gam1}
 |\omega(t) - \dot \gamma(t)| \lesssim (\varepsilon + \| \xi(t)\|_{H^1}).
\end{equation}
Now we suppose that $\|\eta(t)\|_{L^2}\leq \delta_1 e^{-\varepsilon t}$. From \eqref{eq:decodeco} and \eqref{eq:Q_close}, this implies that there exists $C>0$ such
\begin{equation*}
 \|\xi(t)\|_{L^2} \leq C\left(\|\eta(t)\|_{L^2} + \| \zeta(t)\|_{L^2}\right) \leq C(\delta_1 + \varepsilon) e^{-\varepsilon t}
\end{equation*}
We choose $\delta_1$ and $\eps$ is small enough so that $C(\delta_1 + \varepsilon) < \delta^{(1)} $, and thus we obtain inequality \eqref{eq:omega-gam1} for any $t >0$. 
In order to obtain \eqref{eq:l-phi}, we use \eqref{eq:decodeco} and \eqref{eq:Q_close} to have
\begin{equation*}
 \|\xi(t)\|_{H^1} \lesssim \|\eta(t)\|_{H^1} + \| \zeta (t)\|_{H^1} \lesssim \|\eta(t)\|_{H^1} + \varepsilon \sqrt{m(t)},
\end{equation*}
where $\zeta$ is defined in \eqref{eq:zeta}. Moreover, we observe that
\begin{equation*}
 \left|\lambda_{m,\eps}-\dot\gamma\right| \leq \left|\lambda_{m,\eps}-\omega\right| + \left|\omega-\dot\gamma\right| \lesssim \varepsilon(1 + \sqrt{m}) + \| \eta\|_{H^1} \lesssim \varepsilon + \| \eta\|_{H^1} 
\end{equation*}
where we used \eqref{eq:omega-gamma} and \eqref{eq:Q_close}.
\end{proof}

As anticipated, we would like to remark why we kept $\gamma$ as the phase shift in the decomposition \eqref{eq:decom_new} instead of using the more natural $\varphi$ defined in \eqref{eq:defdis1}. On one hand, we would obtain a more direct proof of Lemma \ref{thm:L_new_eq}. On the other hand, we would struggle to find a suitable bound on the difference $|\lambda_{m,\eps} - \dot \varphi|$. Indeed, if we take the scalar product of equation \eqref{eq:eta} with $\Q$, we find the term $(\ii \partial_m \Q, Q_m)$ which is not easy to bound because we do not have the control $ \| \partial_m \Q\|_{L^2} \lesssim \frac{1}{\sqrt{m}}$. On the other hand, we could not use Lemma \ref{lem:l-gamma} because it is not clear how to prove the smallness of $|\dot \gamma - \dot \varphi|$. 
\newline
We will now prove that the new remainder $\eta$ can be bounded with a control that is better than that in \eqref{eq:weak_L_law} and which is enough to show that the remainder remains small for all positive times. By exploiting Lemma \ref{lem:l-phi} and the coercivity property in Lemma \ref{thm:L_new_eq}, we prove the following.

\begin{prop} \label{thm:L_bound}
		Let $\psi_0 \in \A$, $\psi\in C([0,\infty), \A) $ be the corresponding solution to \eqref{eq:Nls_damp}. Let $m$, $\gamma$ and $ \eta \in C([0,\infty), \A)$ be defined as in \eqref{eq:mass_diss}, \eqref{eq:gamma} and \eqref{eq:decom_new}, respectively. 
		There exists $\varepsilon^* = \varepsilon^* (m(0))>0, \delta^* > 0$ such that for any $0<\varepsilon < \varepsilon^*$ and 
 $\| \eta(0)\|_{H^1} \leq \delta^*$, then for any 
 $0 \leq t \leq \eps^{-\frac{3}{2}}$
		\begin{equation}\label{eq:L_bound}
			\mathcal{L}_{m,\varepsilon}[\eta(t)] \leq C\left(\sqrt{\mathcal{L}_{m,\varepsilon}[\eta(0)]}+ \varepsilon^2 t\right)^2 e^{-2\varepsilon t},
		\end{equation}
for some constant $C=C(m_0)>0$.
\end{prop}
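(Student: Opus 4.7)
The strategy is to establish a differential inequality of the form
\[
\frac{d}{dt}\mathcal{L}_{m,\varepsilon}[\eta(t)] \leq -2\varepsilon\, \mathcal{L}_{m,\varepsilon}[\eta(t)] + C(m_0)\,\varepsilon^2 \sqrt{m(t)}\, \sqrt{\mathcal{L}_{m,\varepsilon}[\eta(t)]} + (\text{absorbable terms}),
\]
and then to integrate it via the substitution $N(t) = e^{\varepsilon t}\sqrt{\mathcal{L}_{m,\varepsilon}[\eta(t)]}$. Since $\sqrt{m(t)}\,e^{\varepsilon t} = \sqrt{m_0}$ is constant in $t$, the Gronwall bound becomes $\dot N(t) \leq C(m_0)\varepsilon^2$, which integrates to $N(t) \leq N(0) + C(m_0)\varepsilon^2 t$; this is precisely \eqref{eq:L_bound} after absorbing $\sqrt{m_0}$ into the constant. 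The argument is carried out on a bootstrap interval $[0, T^*]$ where the a priori hypothesis $\|\eta(t)\|_{L^2}\leq \delta_1 e^{-\varepsilon t}$ required by Lemma \ref{lem:l-phi} and the smallness needed for Lemma \ref{thm:L_new_eq} both hold, and at the end one checks that $T^* \geq \varepsilon^{-3/2}$ by a standard continuity argument.

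\textbf{The key computation.} Writing $u = e^{-i\gamma}\psi = Q_{m,\varepsilon}+\eta$, using $S'_{m,\varepsilon}[Q_{m,\varepsilon}]=0$, $M[u]=M[Q_{m,\varepsilon}]=m$, and the equation
\[
\partial_t u = -iS'_{m,\varepsilon}[u] + i(\lambda_{m,\varepsilon}-\dot\gamma)u + \varepsilon g_0 - \varepsilon u,\qquad g_0 := Q_m - 2m\partial_m Q_m,
\]
derived from \eqref{eq:Nls_damp}, most of the $\dot m$-dependencies in $\frac{d}{dt}(S_{m,\varepsilon}[u]-S_{m,\varepsilon}[Q_{m,\varepsilon}])$ cancel. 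Together with $(f,if)=0$, this produces four contributions. The dominant one, $-\varepsilon(S'_{m,\varepsilon}[u], u)$, gives the main dissipation $-2\varepsilon\,\mathcal{L}_{m,\varepsilon}[\eta]$ (plus a quartic correction controlled analogously to \eqref{eq:L_der}), by writing $(S'_{m,\varepsilon}[u],u)$ in terms of $S_{m,\varepsilon}[u]$ and subtracting the corresponding identity at $Q_{m,\varepsilon}$ where $S'_{m,\varepsilon}[Q_{m,\varepsilon}]=0$. The remaining contributions are the phase-modulation term $(\lambda_{m,\varepsilon}-\dot\gamma)(S'_{m,\varepsilon}[u], iu)$, the forcing term $\varepsilon(S'_{m,\varepsilon}[u], g_0)$, and the direct $\varepsilon^2 m\,(i\partial_m g_0,\eta)$ arising from $\dot m = -2\varepsilon m$.

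\textbf{Controlling the surviving terms.} The crucial ingredient is the algebraic identity
\[
S''_{m,\varepsilon}[Q_{m,\varepsilon}](iQ_{m,\varepsilon}) = -\varepsilon\, g_0,
\]
which follows directly from \eqref{eq:Qmeps} upon multiplication by $i$ and the observation $\mathrm{Re}(\bar Q_{m,\varepsilon}\cdot iQ_{m,\varepsilon}) = 0$. By Taylor expansion $S'_{m,\varepsilon}[u] = S''_{m,\varepsilon}[Q_{m,\varepsilon}]\eta + O(\|\eta\|_{H^1}^2)$ and self-adjointness of $S''_{m,\varepsilon}[Q_{m,\varepsilon}]$, this gives $(S'_{m,\varepsilon}[u], iu) = -\varepsilon(\eta, g_0) + O(\|\eta\|_{H^1}^2)$. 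Combining with Lemma \ref{lem:l-phi} and Lemma \ref{thm:L_new_eq},
\[
\bigl|(\lambda_{m,\varepsilon}-\dot\gamma)(S'_{m,\varepsilon}[u], iu)\bigr|\lesssim (\varepsilon + \|\eta\|_{H^1})\bigl(\varepsilon\sqrt{m}\,\|\eta\|_{L^2} + \|\eta\|_{H^1}^2\bigr),
\]
whose leading part is $\varepsilon^2\sqrt{m}\sqrt{\mathcal{L}_{m,\varepsilon}}$; the remainder, $O\bigl((\varepsilon+\sqrt{\mathcal{L}_{m,\varepsilon}})\mathcal{L}_{m,\varepsilon}\bigr)$, is absorbable into $-2\varepsilon\mathcal{L}_{m,\varepsilon}$ for $\varepsilon^*, \delta_0$ small enough in terms of $m_0$. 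The linear-in-$\eta$ forcing term $\varepsilon(S'_{m,\varepsilon}[u], g_0)$ is handled analogously, using the additional regularity estimate $\|Q_{m,\varepsilon}-Q_m-2m\partial_m(Q_{m,\varepsilon}-Q_m)\|_{H^1}\lesssim \varepsilon\sqrt{m}$ extracted from the Implicit Function theorem argument of Proposition \ref{thm:impl}, yielding another $\varepsilon^2\sqrt{m}\sqrt{\mathcal{L}_{m,\varepsilon}}$ contribution. Finally the $\varepsilon^2 m(i\partial_m g_0,\eta)$ term is immediately $\lesssim \varepsilon^2\sqrt{m}\sqrt{\mathcal{L}_{m,\varepsilon}}$ by Cauchy--Schwarz and coercivity.

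\textbf{Bootstrap and main obstacle.} Collecting the estimates gives the announced inequality, and Gronwall applied to $N(t)$ yields \eqref{eq:L_bound}. To close the bootstrap, combining \eqref{eq:L_bound} with Lemma \ref{thm:L_new_eq} gives $\|\eta(t)\|_{L^2}\leq K(m_0)(\sqrt{\mathcal{L}_{m,\varepsilon}[\eta(0)]} + \varepsilon^2 t)e^{-\varepsilon t}$; for $t\leq \varepsilon^{-3/2}$ one has $\varepsilon^2 t\leq \sqrt{\varepsilon}$, so that $\|\eta(t)\|_{L^2}\leq K(m_0)(\delta^*+\sqrt{\varepsilon})e^{-\varepsilon t}$, which is strictly smaller than $\delta_1 e^{-\varepsilon t}$ provided $\delta^*$ and $\varepsilon^*$ are chosen sufficiently small in terms of $\delta_1$ and $K(m_0)$; continuity then extends the bootstrap to $[0,\varepsilon^{-3/2}]$. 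The principal technical obstacle is precisely the identification of the cancellation $S''_{m,\varepsilon}[Q_{m,\varepsilon}](iQ_{m,\varepsilon})=-\varepsilon g_0$ and the companion $H^1$-regularity bound on the $m$-derivative of $Q_{m,\varepsilon}-Q_m$: without them, the phase-modulation and forcing terms would each contribute at order $\varepsilon\sqrt{m}\sqrt{\mathcal{L}_{m,\varepsilon}}$ rather than $\varepsilon^2\sqrt{m}\sqrt{\mathcal{L}_{m,\varepsilon}}$, reproducing the obstruction \eqref{eq:obstr} of Section \ref{sec:first} and preventing the essential improvement over Proposition \ref{thm:L_weak}.
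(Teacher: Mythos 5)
Your overall architecture coincides with the paper's: differentiate $\mathcal L_{m,\eps}$ in time, use the equation for $u=e^{-\ii\gamma}\psi$ together with $(S'_{m,\eps}[u],\ii S'_{m,\eps}[u])=0$ and the mass identity, obtain a differential inequality, pass to $N(t)=e^{\eps t}\sqrt{\mathcal L_{m,\eps}}$, and close by Gronwall plus a bootstrap. Your four contributions are algebraically the same as the paper's (which groups them as $-\eps(S',\eta)-\eps(S',\Q-Q_m)-2\eps m(S',\d_mQ_m)+(\lambda_{m,\eps}-\dot\gamma)(S',\ii u)$ plus the $\d_t g_0$ term), and your treatment of the phase-modulation term via $S''_{m,\eps}[\Q](\ii\Q)=-\eps g_0$ and Lemma \ref{lem:l-phi} is exactly the paper's. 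However, there is a genuine gap in how you estimate the pair $-\eps(S'_{m,\eps}[u],u)$ and $\eps(S'_{m,\eps}[u],g_0)$. Writing $(S'_{m,\eps}[u],u)=2\mathcal L_{m,\eps}-\tfrac12(\|u\|_{L^4}^4-\|\Q\|_{L^4}^4)+\eps(\ii g_0,\eta)$, the correction is \emph{not} merely quartic: its linear-in-$\eta$ part is $2\eps(|\Q|^2\Q,\eta)\sim\eps m^{3/2}\|\eta\|_{H^1}\sim\eps m\sqrt{\mathcal L_{m,\eps}}$, i.e.\ precisely the obstruction \eqref{eq:obstr} you are trying to beat. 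Likewise $\eps(S'_{m,\eps}[u],g_0)\approx\eps(\eta,S''_{m,\eps}[\Q](Q_m-2m\d_mQ_m))$ is by itself of order $\eps m\sqrt{\mathcal L_{m,\eps}}$, since $g_0$ is of size $\sqrt m$, not of size $\eps$. These two dangerous linear terms cancel only when combined: one needs $S''_{m,\eps}[\Q]\Q=-2|\Q|^2\Q+\ii\eps g_0$ (from \eqref{eq:F_eq}) to kill $2\eps(|\Q|^2\Q,\eta)$ against $\eps(\eta,S''Q_m)$ up to $O(\eps^2 m^{3/2}\|\eta\|)$, and $L_+\d_mQ_m=-\tfrac{d\omega}{dm}Q_m$ together with the near-orthogonality $(\eta,Q_m)=(\eta,\Q)-(\eta,\zeta)=O(\|\eta\|_{L^2}^2+\eps\sqrt m\|\eta\|_{L^2})$ to handle $2\eps m(\eta,S''\d_mQ_m)$. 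Your write-up asserts the two terms are individually of the good order and never identifies this cancellation, so followed literally the argument stalls at the same $\eps m\sqrt{\mathcal L}$ barrier as Proposition \ref{thm:L_weak}.

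A second, related problem is your proposed use of the estimate $\|\Q-Q_m-2m\d_m(\Q-Q_m)\|_{H^1}\lesssim\eps\sqrt m$ ``extracted from the Implicit Function theorem.'' This requires a quantitative bound on $\d_m\Q$, and the paper explicitly points out (in the discussion around \eqref{eq:defdis1} and after Lemma \ref{lem:l-phi}) that the $H^1$-closeness \eqref{eq:Q_close} does \emph{not} a priori control $\d_m\Q-\d_mQ_m$; the proof of Proposition \ref{thm:L_bound} is deliberately reorganized so that only $\|\d_mQ_m\|_{L^2}\lesssim m^{-1/2}$ and $\|\d_{mm}Q_m\|_{L^2}\lesssim m^{-3/2}$ from the Appendix are needed, and $\d_m\Q$ never appears. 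Your forcing-term estimate therefore rests on an unproven bound that the paper works to avoid. (Relatedly, your claim that $\eps^2 m(\ii\d_mg_0,\eta)$ is ``immediately'' controlled by Cauchy--Schwarz silently uses the nontrivial Appendix bound on $\d_{mm}Q_m$, since $\d_mg_0=-\d_mQ_m-2m\d_{mm}Q_m$.) The bootstrap closure and the Gronwall step are fine.
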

 
\begin{proof}
By differentiating with respect to time the functional $\mathcal L_{m, \eps}$, we obtain
\begin{equation}\label{eq:lyap_tder}
\begin{aligned}
\frac{d}{dt}\mathcal L_{m, \eps}[\eta]&=
\left(S'_{m, \eps}[Q_{m, \eps}+\eta], \d_t(Q_{m, \eps}+\eta)\right)
-\left(S'_{m, \eps}[Q_{m, \eps}], \d_tQ_{m, \eps}\right)\\
&+\frac{d}{dt}\lambda_{m, \eps}\left(M[Q_{m, \eps}+\eta]-M[Q_{m, \eps}]\right)
-Re\int\ii\eps\d_t(Q_m-2m\d_mQ_m)\bar\eta\,dx\\
&=\left(S'_{m, \eps}[Q_{m, \eps}+\eta], \d_t(Q_{m, \eps}+\eta)\right)
-Re\int\ii\eps\d_t(Q_m-2m\d_mQ_m)\bar\eta\,dx,
\end{aligned}
\end{equation}
where in the last identity we used \eqref{eq:qmecrt} and the fact that $M[Q_{m, \eps}+\eta]=M[\psi]=M[Q_{m, \eps}]$. Let us notice that the first variation of $S_{m, \eps}$ appearing above may be written also in the following form
\begin{equation}\label{eq:111}
\begin{aligned}
S'_{m, \eps}[Q_{m, \eps}+\eta]=&-\d_{xx}\eta+\lambda_{m, \eps}\eta-|Q_{m, \eps}+\eta|^2(Q_{m, \eps}+\eta)+|Q_{m, \eps}|^2Q_{m, \eps}\\
=&S''_{m, \eps}[Q_{m, \eps}]\eta-2Re(\overline{Q}_{m, \eps}\eta)\eta-|\eta|^2(Q_{m, \eps}+\eta).
\end{aligned}
\end{equation}
Consequently, we may also write equation \eqref{eq:eta} as follows,
\begin{equation*}
 \begin{aligned}
 0 &= \ii\d_t\eta-S'_{m, \eps}[Q_{m, \eps}+\eta]+(\lambda_{m, \eps}-\dot\gamma)(Q_{m, \eps}+\eta)+\ii\eps\eta \\
&+\ii\eps(Q_{m, \eps}-Q_m-2m\partial_m(Q_{m, \eps}-Q_m)).
 \end{aligned}
\end{equation*}
By using the equation above and the fact that $\d_tQ_{m, \eps}=-2\eps m\d_mQ_{m, \eps}$, we obtain
\begin{equation}\label{eq:gr_fst}
\begin{aligned}
\left(S'_{m, \eps}[Q_{m, \eps}+\eta], \d_t(Q_{m, \eps}+\eta)\right) 
& =-2\eps m\left(S'_{m, \eps}[Q_{m, \eps}+\eta], \d_mQ_{m, \eps}\right)\\
& -\eps\left(S'_{m, \eps}[Q_{m, \eps}+\eta], \eta+Q_{m, \eps}-Q_m-2m\d_m(Q_{m, \eps}-Q_m)\right)\\
& +(\lambda_{m, \eps}-\dot\gamma)\left(S'_{m, \eps}[Q_{m, \eps}+\eta], \ii(Q_{m, \eps}+\eta)\right)\\
&=-\eps\left(S'_{m, \eps}[Q_{m, \eps}+\eta], \eta\right)
\\
&-\eps\left(S'_{m, \eps}[Q_{m, \eps}+\eta],Q_{m, \eps}-Q_m\right)\\
& -2\eps m\left(S'_{m, \eps}[Q_{m, \eps}+\eta], \d_mQ_{m}\right) \\
&+(\lambda_{m, \eps}-\dot\gamma)\left(S'_{m, \eps}[Q_{m, \eps}+\eta], \ii(Q_{m, \eps}+\eta)\right).
\end{aligned}\end{equation}
We now estimate \eqref{eq:gr_fst} term by term. For the first one, we use \eqref{eq:111} to write
\begin{equation*}
-\eps\left(S'_{m, \eps}[Q_{m, \eps}+\eta], \eta\right)
=-\eps\left(S''_{m, \eps}[Q_{m, \eps}]\eta, \eta\right)
+\varepsilon\int|\eta|^2Re\left((3Q_{m, \eps}+\eta)\bar\eta\right)\,dx.
\end{equation*}
We thus recall identity \eqref{eq:L_new_dec} to further develop the previous expression as
\begin{equation*}
-\eps\left(S'_{m, \eps}[Q_{m, \eps}+\eta], \eta\right)
=-2\eps \mathcal L_{m, \eps}[\eta]-\varepsilon\int\frac12|\eta|^4+5|\eta|^2Re(\overline{Q}_{m, \eps}\eta)\,dx.
\end{equation*}
The nonlinear term on the right-hand side may be estimated by
\begin{equation*}
\varepsilon \left|\int\frac12|\eta|^4+5|\eta|^2Re(\overline{Q}_{m, \eps}\eta)\,dx\right|\lesssim \varepsilon m\|\eta\|_{H^1}^2.
\end{equation*}
We now consider the second term in \eqref{eq:gr_fst}. By using identity \eqref{eq:111}, we obtain that
\begin{equation*}
 \begin{aligned}
 - \eps \left(S'_{m, \eps}[Q_{m, \eps}+\eta],Q_{m, \eps}-Q_m\right) &= - \eps\left(S''_{m, \eps}[Q_{m, \eps}]\eta, Q_{m, \eps}-Q_m \right) \\ 
 & + \eps \left( 2Re(\overline{Q}_{m, \eps}\eta)\eta+|\eta|^2(Q_{m, \eps}+\eta), Q_{m, \eps}-Q_m \right).
 \end{aligned}
\end{equation*}
The higher order terms are estimated from \eqref{eq:Q_close} and \eqref{eq:mass_eta} as
\begin{equation*}
 \begin{aligned}
 \eps \left( 2Re(\overline{Q}_{m, \eps}\eta)\eta+|\eta|^2(Q_{m, \eps}+\eta), Q_{m, \eps}-Q_m \right) &\lesssim \eps \| \Q - Q_m\|_{L^2} \| \eta\|_{L^\infty}^2 (\|\Q\|_{L^2} + \| \eta\|_{L^2}) \\ &\lesssim \varepsilon^2 m \| \eta \|_{H^1}^2.
 \end{aligned}
\end{equation*}
Moreover, by using identities \eqref{eq:2derL}, \eqref{eq:Qm} and equation \eqref{eq:F_eq}, we observe that 
\begin{equation*}
 \begin{aligned}
 S''_{m, \eps}[Q_{m, \eps}](\Q - Q_m) &= ( \omega - \lambda_{m,\eps}) Q_m + (|\Q|^2 - Q_m^2) Q_m \\ 
 &+ 2\Q( Q_m Re(\Q) - |\Q|^2) \\
 &+ \ii \eps (Q_m - 2m \partial_m Q_m). 
 \end{aligned}
\end{equation*}
Consequently \eqref{eq:Q_close} and \eqref{eq:mass_eta} imply that
\begin{equation*}
 \begin{aligned}
 \eps \left|\left(S''_{m, \eps}[Q_{m, \eps}]\eta, Q_{m, \eps}-Q_m \right) \right| & \lesssim \eps |\omega - \lambda_{m,\eps}| \left|(\eta,Q_m)\right| \\
 &+ \eps \left|\left(\eta, (|\Q|^2 - Q_m^2) Q_m + 2\Q( Q_m Re(\Q) - |\Q|^2) \right) \right| \\
 &+ \eps \left|\left(\eta, \ii \eps(Q_m -2m\partial_m Q_m) \right) \right| \\
 &\lesssim \eps \| \eta\|_{L^2} \sqrt{m} \left(|\omega - \lambda_{m,\eps}| + \sqrt{m}\| \Q - Q_m\|_{H^1} \right) \\
 &+ \eps^2\left|\left(\eta, \ii (Q_m -2m\partial_m Q_m) \right)\right|\\
 &\lesssim \eps^2 m \| \eta\|_{L^2} + \eps^2\left|\left(\eta, \ii (Q_m -2m\partial_m Q_m) \right)\right|.
 \end{aligned}
\end{equation*}
Thus, from \eqref{eq:dm_Qm}, we have that 
\begin{equation*}\begin{aligned}
\left|-\eps\left(S'_{m, \eps}[Q_{m, \eps}+\eta],Q_{m, \eps}-Q_m\right)\right|
\lesssim \varepsilon^2 m \| \eta \|_{H^1}^2 + \eps^2 \sqrt{m}\| \eta\|_{L^2}. 
\end{aligned}\end{equation*}
We use identity \eqref{eq:111} to estimate the third term on the right-hand side of \eqref{eq:gr_fst} and we get
\begin{equation*}
\begin{aligned}
 &-2\eps m\left(S'_{m, \eps}[Q_{m, \eps}+\eta], \d_mQ_{m}\right)\\
 &=-2\eps m\left(S''_{m, \eps}[Q_{m, \eps}]\eta-2Re(\overline{Q}_{m, \eps}\eta)\eta-|\eta|^2(Q_{m, \eps}+\eta), \d_mQ_m\right)\\
 &=-2\eps m\left(\eta, S''_{m, \eps}[Q_{m, \eps}]\d_mQ_m\right)
 +2\eps m\left(2Re(\overline{Q}_{m, \eps}\eta)\eta+|\eta|^2(Q_{m, \eps}+\eta), \d_mQ_m\right).
\end{aligned}
\end{equation*}
Let us focus on the first term. From \eqref{eq:2derL} and \eqref{eq:secvar} we may write
\begin{equation}\label{eq:112}
\begin{aligned}
S''_{m, \eps}[Q_{m, \eps}]\d_mQ_m
&=S''_m[Q_m]\d_mQ_m
+(\lambda_{m, \eps}-\omega(m))\d_mQ_m
-\left(|Q_{m, \eps}|^2-Q_m^2\right)\d_mQ_m\\
&-2\left[Re\left(\overline{(Q_{m, \eps}-Q_m)}\d_mQ_m\right)Q_{m, \eps}
+Re(\bar Q_m\d_mQ_m)(Q_{m, \eps}-Q_m)\right].
\end{aligned}
\end{equation}
Moreover, by differentiating equation \eqref{eq:sol_wave} with respect to $m$ we obtain
\begin{equation}\label{eq:om_pr_Qm}
S''_m[Q_m]\d_mQ_m=L_+\d_mQ_m=-\frac{d\omega}{dm}Q_m.
\end{equation}
By combining \eqref{eq:112} and \eqref{eq:om_pr_Qm}, we obtain that
\begin{equation*}\begin{aligned}
-2\eps m\left(\eta, S''_{m, \eps}[Q_{m, \eps}]\d_mQ_m\right)
&=2\eps m\frac{d\omega}{dm}\left(\eta, Q_m\right)
-2\eps m(\lambda_{m, \eps}-\omega)(\eta, \d_mQ_m)\\
&+2\eps m\left(\eta,(|Q_{m, \eps}|^2-Q_m^2)\d_mQ_m\right)\\
&+4\eps m\Bigg(\eta,Re\left(\overline{(Q_{m, \eps}-Q_m)}\d_mQ_m\right)Q_{m, \eps} \\
&+Re(\bar Q_m\d_mQ_m)(Q_{m, \eps}-Q_m)
\Bigg).
\end{aligned}\end{equation*}
By using that $\frac{d\omega}{dm} \lesssim 1$, see \eqref{eq:om_pr} in the Appendix, we may now estimate the third term in \eqref{eq:gr_fst} to obtain
\begin{equation*}\begin{aligned}
\left|2\eps m\left(S'_{m, \eps}[Q_{m, \eps}+\eta], \d_mQ_m\right)\right|
&\lesssim\eps m\|\eta\|_{L^2}^2+\eps m|\lambda_{m, \eps}-\omega|\|\eta\|_{L^2}\|\d_mQ_m\|_{L^2}\\
&+\eps m\|\d_mQ_m\|_{L^2}\|\eta\|_{H^1}\sqrt{m}\|Q_{m, \eps}-Q_m\|_{H^1}.
\end{aligned}\end{equation*}
Now we use $\|\d_mQ_m\|_{L^2} \lesssim \frac{1}{\sqrt{m}}$, see \eqref{eq:dm_Qm} in the Appendix, and \eqref{eq:Q_close} to obtain 
\begin{equation*}\begin{aligned}
\left|2\eps m\left(S'_{m, \eps}[Q_{m, \eps}+\eta], \d_mQ_m\right)\right|
&\lesssim\eps m\|\eta\|_{L^2}^2+\eps^2 m\|\eta\|_{H^1}.
\end{aligned}
\end{equation*}
Let us now consider the last term in \eqref{eq:gr_fst}. First of all, we write
\begin{equation*}\begin{aligned}
&\left(S'_{m, \eps}[Q_{m, \eps}+\eta], \ii(Q_{m, \eps}+\eta)
\right)\\
&=\left(S''_{m, \eps}[Q_{m, \eps}]\eta-2Re(\overline{Q}_{m, \eps}\eta)\eta-|\eta|^2(Q_{m, \eps}+\eta),
\ii(Q_{m, \eps}+\eta)\right)\\
&=\left(-|\eta|^2(Q_{m, \eps}+\eta), \ii\eta\right)
+\left(\eta, S''_{m, \eps}[Q_{m, \eps}](\ii Q_{m, \eps})\right)\\
&-\left(2Re(\overline{Q}_{m, \eps}\eta)\eta+|\eta|^2(Q_{m, \eps}+\eta),\ii Q_{m, \eps}\right).
\end{aligned}\end{equation*}
By using the definition \eqref{eq:2derL} and equation \eqref{eq:Qmeps}, we may write the above expression as
\begin{equation*}\begin{aligned}
\left(S'_{m, \eps}[Q_{m, \eps}+\eta], \ii(Q_{m, \eps}+\eta)
\right) &= \left(-|\eta|^2(Q_{m, \eps}+\eta), \ii\eta\right)
-\eps\left(\eta,(Q_m-2m\d_mQ_m)\right)\\
&-\left(2Re(\overline{Q}_{m, \eps}\eta)\eta+|\eta|^2(Q_{m, \eps}+\eta),\ii Q_{m, \eps}\right).
\end{aligned}\end{equation*}
We now use Sobolev embedding to obtain that
\begin{equation*}
\left|\left(S'_{m, \eps}[Q_{m, \eps}+\eta], \ii(Q_{m, \eps}+\eta)\right)\right|
\lesssim\eps\sqrt{m} \|\eta\|_{H^1}+m\|\eta\|_{H^1}^2.
\end{equation*}
For the last term in \eqref{eq:gr_fst}, we observe that
\begin{equation*}
 \begin{aligned}
 \left(S'_{m, \eps}[Q_{m, \eps}+\eta], \ii(Q_{m, \eps}+\eta)\right) & = \left(S''_{m, \eps}[Q_{m, \eps}]\eta, \ii Q_{m, \eps} \right) + \left(S''_{m, \eps}[Q_{m, \eps}]\eta, \ii \eta \right) \\ 
 & + \left( 2Re(\overline{Q}_{m, \eps}\eta)\eta+|\eta|^2(Q_{m, \eps}+\eta), \ii(Q_{m, \eps}+\eta) \right).
 \end{aligned}
\end{equation*}
By Sobolev embedding, \eqref{eq:mass_eta} and \eqref{eq:111}, the higher order terms can be estimated as 
\begin{equation*}
 \begin{aligned}
 \left( 2Re(\overline{Q}_{m, \eps}\eta)\eta+|\eta|^2(Q_{m, \eps}+\eta), \ii(Q_{m, \eps}+\eta) \right) \lesssim m \| \eta\|_{H^1}^2.
 \end{aligned}
\end{equation*}
Moreover, from identity \eqref{eq:2derL} we have 
\begin{equation*}
 \begin{aligned}
 \left(S''_{m, \eps}[Q_{m, \eps}]\eta, \ii Q_{m, \eps} \right) &= - (\eta, \eps(Q_m - 2m \partial_m Q_m)) \\
 &\lesssim \eps \sqrt{m} \| \eta \|_{L^2}.
 \end{aligned}
\end{equation*}
Finally, by exploiting \eqref{eq:l-phi}, we have
\begin{equation*}
\begin{aligned}
 |\lambda_{m, \eps}-\dot\gamma|\big|\left(S'_{m, \eps}[Q_{m, \eps}+\eta], \ii(Q_{m, \eps}+\eta)\right)\big|
&\lesssim(\eps \sqrt{m} \|\eta\|_{L^2}+m\|\eta\|_{H^1}^2)(\eps+\|\eta\|_{H^1}) \\
& \lesssim \eps^2 \sqrt{m} \|\eta\|_{L^2} + \eps \sqrt{m} \|\eta\|_{H^1}^2 + m\|\eta\|_{H^1}^3,
\end{aligned}
\end{equation*}
for $\| \eta(t)\|_{L^2} < \delta_1 e^{-\varepsilon t}$ where $\delta_1>0$ is defined in Lemma \ref{lem:l-phi}. 
By collecting all the estimates above, we obtain that there exists $C>0$ such that the first term in \eqref{eq:lyap_tder} can be bounded as
\begin{equation}\label{eq:stima1}
 \begin{aligned}
 \left( S_{m,\varepsilon}'[Q_{m,\varepsilon} + \eta], \partial_t (Q_{m,\varepsilon} + \eta) \right) &\leq-2 \varepsilon \mathcal L_{m,\eps}[\eta] 
 + C\left( \varepsilon^2 m \| \eta\|_{H^1} + \varepsilon m \| \eta\|_{H^1}^2 + m \| \eta\|_{H^1}^3 \right).
 \end{aligned}
\end{equation}
We will now bound the second term in \eqref{eq:lyap_tder}. We observe that
\begin{equation*}
 \begin{aligned}
 - Re\int\ii\eps\d_t(Q_m-2m\d_mQ_m)\bar\eta\,dx
&= - \varepsilon \frac{dm}{dt} \left(\ii \partial_m (Q_m - 2m \partial_m Q_m), \eta \right) \\ 
&= - 2\varepsilon^2 m \left(\ii (\partial_m Q_m + 2 m \partial_{mm} Q_m), \eta \right) \\
& \lesssim 
\varepsilon^2 m \| \eta\|_{L^2} \| \partial_m Q_m\|_{L^2} + \varepsilon^2 m^2 \| \eta\|_{L^2} \| \partial_{mm} Q_m\|_{L^2}.
 \end{aligned}
\end{equation*}
By using $ \| \partial_m Q_m \|_{L^2} \lesssim m^{-\frac{1}{2}}$ and $\| \partial_{mm} Q_m \|_{L^2} \lesssim m^{-\frac{3}{2}}$, see \eqref{eq:dm_Qm} in the Appendix, we have 
\begin{equation}\label{eq:stima2}
 - Re\int\ii\eps\d_t(Q_m-2m\d_mQ_m)\bar\eta\,dx \lesssim \varepsilon^2 \sqrt{m} \| \eta\|_{L^2}.
\end{equation}
By collecting together \eqref{eq:lyap_tder} \eqref{eq:stima1} and \eqref{eq:stima2}, we obtain that there exists $C>0$ such that we can bound the time derivative of $\mathcal{L}_{m,\eps}[\eta]$ as 
\begin{equation}\label{eq:stima3}
 \frac{d}{dt} \mathcal{L}_{m,\eps}[\eta] = -2 \varepsilon \mathcal{L}_{m,\eps}[\eta] + C\left(
 \varepsilon^2 \sqrt{m} \| \eta\|_{H^1} 
 + \varepsilon m \| \eta\|_{H^1}^2 + m \| \eta\|_{H^1}^3 \right).
\end{equation}
Now we want to use Lemma \ref{thm:L_new_eq} to bound $\| \eta\|_{H^1}$. For this purpose, let $\delta^* = \min(\delta_0, \delta_1)>0$, where $\delta_0>0$ is defined in Lemma \ref{thm:L_new_eq}, and let $\varepsilon^*$ be small enough so that $\varepsilon^*/4 < \delta^*$ and $\varepsilon^* \leq \min(\eps_0, \varepsilon_1, \eps_2, \eps_3)$ where $\eps_j$, $j= 0,1,2,3$ are defined in Propositions \ref{thm:impl}, \ref{thm:pos1} and Lemmas \ref{thm:L_new_eq}, \ref{lem:l-phi}, respectively. Suppose that $\eps < \eps^*$ and that the initial condition satisfies 
\begin{equation*}
 \| \eta(0)\|_{H^1} \leq \eps.
\end{equation*}
Then, by continuity, there exists a time interval $[0,t_0]$ such that for any $t\in [0,t_0]$, we have the bound 
\begin{equation}\label{eq:etabou}
 \| \eta(t)\|_{H^1} \leq 2\varepsilon < \delta^* e^{-\varepsilon t}. 
\end{equation}
Thus, in this time interval, we can use inequality \eqref{eq:L_new_eq} and \eqref{eq:etabou} in \eqref{eq:stima3} to obtain that there exists $c>0$ such that 
\begin{equation*}
 \frac{d}{dt} \mathcal{L}_{m,\eps}[\eta] \leq \eps(-2 + c m_0 e^{-2\eps t} ) \mathcal{L}_{m,\eps}[\eta] + \varepsilon^2 c \sqrt{m_0} e^{-\eps t} \sqrt{ \mathcal{L}_{m,\eps}[\eta]}.
\end{equation*}
 Now we make the substitution $N_{m,\varepsilon}[\eta(t)] = e^{\varepsilon t} \sqrt{\mathcal{L}_{m,\varepsilon}[\eta(t)]}$ and get 
		\begin{equation*}
			\begin{split}
				\frac{d}{dt}N_{m,\varepsilon}[\eta(t)] \leq \frac{\eps}{2} c m_0 e^{-2\eps t} N_{m,\varepsilon}[\eta(t)] + \frac{\eps^2}{2} c \sqrt{m_0}. 
			\end{split}
		\end{equation*}
 Integrating the last inequality in time yields
 \begin{equation*}
 N_{m,\varepsilon}[\eta(t)] \leq N_{m,\varepsilon}[\eta(0)] + \frac{\eps^2}{2} c \sqrt{m_0} t + \frac{\eps}{2} c m_0 \int_0^t e^{-2\eps s} N_{m,\varepsilon}[\eta(s)] ds.
 \end{equation*}
		From Gronwall's lemma it follows that there exists $K = K(m(0)) >0$ such that
		\begin{displaymath}
		N_{m,\varepsilon}[\eta(t)] \leq K \left(N_{m,\varepsilon}[\eta(0)] + \varepsilon^2 t\right)
		\end{displaymath}
 which is equivalent to inequality \eqref{eq:L_bound}.
	In particular, for any $t \in [0,t_0]$, we obtain that 
 \begin{equation}\label{eq:tokk2}
 \mathcal L_{m,\varepsilon} [\eta(t)] \leq K (\varepsilon + \eps^4 t_0^2) e^{-2\varepsilon t}.
 \end{equation}
 For $t_0 \leq \eps^{-\frac{3}{2}}$ and any $t \in [0,t_0]$, we thus obtain that $ \| \eta(t)\|_{H^1} \lesssim \varepsilon e^{-\varepsilon t}$. By choosing $\varepsilon^*$ small enough, we obtain that $\| \eta(t_0)\|_{H^1} < \frac{\delta^*}{4}$ and we may repeat the argument above to bootstrap the estimate \eqref{eq:tokk2} to the time interval $[0,t_1]$ where $t_1 > t_0$. Recursively, this proves that estimate \eqref{eq:L_bound} is valid for any $t \leq \eps^{-\frac{3}{2}}$. 
\end{proof}

Inequality \eqref{eq:L_bound} and the closeness of the approximated profile $Q_{m,\varepsilon}$ to the cnoidal profile $Q_m$ \eqref{eq:Q_close} readily imply Theorem \ref{thm:mainCN}. 

\begin{proof}[Proof of Theorem \ref{thm:mainCN}:]
		Let $\psi_0 \in \A$, with $M[\psi(t)] = m(t)$. Let $\gamma(t)$ be defined by \eqref{eq:gamma}. Suppose that the initial condition verifies 
		\begin{equation*}
			\| \eta(0) \|_{H^1} = \| Q_{m(0),\varepsilon} - e^{\ii \gamma(0)}\psi_0\|_{H^1} \leq \varepsilon < \varepsilon^*
		\end{equation*}
		where $Q_{m(0),\varepsilon}$ is defined in Proposition \ref{thm:impl} and $\varepsilon^*>0$ is defined in Proposition \ref{thm:L_bound}. Let $\eta(t,x)$ be the remainder term defined in \eqref{eq:decom_new}. From \eqref{eq:L_new_eq}, \eqref{eq:L_bound}, 
 we get that 
		\begin{equation*}
			\| \eta(t) \|_{H^1}^2 \lesssim 	\mathcal{L}_{m,\varepsilon}[\eta(t)] \lesssim \left(\sqrt{\mathcal{L}_{m,\varepsilon}[\eta(0)]} + \varepsilon^2 t\right)^2 e^{-2\varepsilon t},
		\end{equation*}
 for any $t \leq \eps^{-\frac{3}{2}}$. In particular, if we denote by $t^* = \eps^{-\frac{3}{2}}$, then we obtain that 
 \begin{equation*}
 \| \eta(t^*) \|_{H^1}^2 \lesssim \eps e^{-2\varepsilon t^*} \lesssim \eps.
 \end{equation*}
		 Now let $\xi(t,x)$ be the remainder term defined in \eqref{eq:decomCN}. Then we observe that \eqref{eq:L_new_eq}, \eqref{eq:L_bound} and \eqref{eq:Q_close} imply
		\begin{equation*} 
			\begin{split}
				\| \xi(t^*) \|_{H^1}^2 & \lesssim \| \eta(t^*) \|_{H^1}^2 + \| Q_{m(t^*),\varepsilon} - Q_{m(t^*)} \|_{H^1}^2 \lesssim \eps. 
			\end{split}
		\end{equation*}
 On the other hand, from Lemma \ref{thm:L_equivalence} and Proposition \ref{thm:L_weak}, we obtain that for any $t \geq t^*$, 
 \begin{equation*}
 \| \xi(t) \|_{H^1}^2 \lesssim \mathcal{L}[\psi(t)] \lesssim ( \mathcal{L}[\psi(t^*)] + M[\psi(t^*)]^2 ) e^{-2\varepsilon (t - t^*)}.
 \end{equation*}
 We choose $\eps^*$ to be sufficiently small so that the following inequality is true
 \begin{equation*}
 M[\psi(t^*)] = m_0 e^{-\frac{2}{\sqrt{\eps}}} \leq \sqrt{\eps}.
 \end{equation*}
 This allows us to find that 
 \begin{equation*}
 \| \xi(t) \|_{H^1}^2 \lesssim \eps e^{-2\varepsilon (t - t^*)},
 \end{equation*}
 for any $t \geq t^*$. The last inequality shows that the family of cnoidal profiles $\mathcal F$ is stable for all positive times.
 \end{proof}

\section{Appendix}

\begin{prop}\label{prop:d_om}
 For any $m\in [0,m_0]$, the parameter $\omega$ be defined in \eqref{eq:omegaCN} satisfies 
 \begin{equation}\label{eq:om_pr}
 \frac{d}{dm} \omega(m) \lesssim 1,
 \end{equation}
 and the cnoidal profiles $Q_m$ defined in \eqref{eq:Qm} satisfy
 \begin{equation}\label{eq:dm_Qm}
 \| \partial_m Q_m \|_{L^2} \lesssim m^{-\frac{1}{2}}, \quad \| \partial_{mm} Q_m \|_{L^2} \lesssim m^{-\frac{3}{2}}.
 \end{equation}
\end{prop}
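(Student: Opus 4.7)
The plan is to reduce every bound to a computation in the elliptic parameter $k \in [0, k_0]$, where $k_0 = k(m_0) < 1$ is the endpoint provided by the bijection in \eqref{eq:k}, and then to use the chain rule
\begin{equation*}
\frac{d\omega}{dm} = \frac{\omega'(k)}{m'(k)}, \qquad \partial_m Q_m = \frac{\partial_k Q_k}{m'(k)}, \qquad \partial_{mm} Q_m = \frac{\partial_{kk} Q_k}{m'(k)^2} - \frac{m''(k)\, \partial_k Q_k}{m'(k)^3},
\end{equation*}
together with explicit asymptotics at the degenerate endpoint $k=0$.

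First, I would record that on any compact subset of $(0,1)$ the functions $K(k), \omega(k), m(k)$ and the profile $Q_k$ (as an $H^1(\T)$-valued map, thanks to smoothness of $cn$) depend smoothly on $k$, with $m'(k) > 0$. Consequently, all claimed bounds hold trivially on $[k_1, k_0]$ for any fixed $k_1>0$; the content of the proposition is the control as $k \to 0^+$. From the standard expansion $K(k) = \tfrac{\pi}{2}(1 + \tfrac{k^2}{4} + O(k^4))$, a direct substitution in \eqref{eq:omegaCN} and in \eqref{eq:k} gives
\begin{equation*}
\omega(k) = -1 + \tfrac{3}{2}k^2 + O(k^4), \qquad m(k) = \pi k^2 + O(k^4),
\end{equation*}
so that $\omega'(k) = 3k + O(k^3)$, $m'(k) = 2\pi k + O(k^3)$, and $m''(k) = 2\pi + O(k^2)$. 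These expansions immediately yield $\frac{d\omega}{dm}= \frac{\omega'(k)}{m'(k)} \to \tfrac{3}{2\pi}$ as $k \to 0^+$, which combined with the smooth regime gives \eqref{eq:om_pr}. They also give the equivalence $m(k) \asymp k^2$, i.e., $k \asymp \sqrt{m}$, which is the source of the $m^{-1/2}$ and $m^{-3/2}$ singularities.

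Next I turn to the bounds on $Q_k$. Writing $Q_k(x) = k\, R_k(x)$ with
$R_k(x) = \sqrt{2}\,\tfrac{2K(k)}{\pi}\, cn\!\left(\tfrac{2K(k)}{\pi}x, k\right)$, I note that $R_k$ is smooth in $k \in [0, k_0]$ as an $H^1(\T)$-valued map with $R_0(x) = \sqrt{2}\cos(x)$, and in particular $\|\partial_k R_k\|_{H^1}, \|\partial_{kk} R_k\|_{H^1}$ are uniformly bounded on $[0,k_0]$. Hence
\begin{equation*}
\|\partial_k Q_k\|_{L^2} = \|R_k + k\,\partial_k R_k\|_{L^2} \lesssim 1, \qquad \|\partial_{kk} Q_k\|_{L^2} = \|2\partial_k R_k + k\,\partial_{kk} R_k\|_{L^2} \lesssim 1,
\end{equation*}
uniformly in $k \in [0,k_0]$. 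Combined with $m'(k) \asymp k \asymp \sqrt{m}$, the chain-rule formula immediately gives
\begin{equation*}
\|\partial_m Q_m\|_{L^2} = \frac{\|\partial_k Q_k\|_{L^2}}{|m'(k)|} \lesssim \frac{1}{k} \asymp m^{-1/2},
\end{equation*}
which proves the first bound in \eqref{eq:dm_Qm}. For the second, using $m''(k) \asymp 1$ and $m'(k)^3 \asymp k^3$, the second chain-rule formula is dominated by the term $m''(k)\partial_k Q_k / m'(k)^3$, and
\begin{equation*}
\|\partial_{mm} Q_m\|_{L^2} \lesssim \frac{\|\partial_{kk} Q_k\|_{L^2}}{|m'(k)|^2} + \frac{|m''(k)|\,\|\partial_k Q_k\|_{L^2}}{|m'(k)|^3} \lesssim \frac{1}{k^2} + \frac{1}{k^3} \lesssim k^{-3} \asymp m^{-3/2}.
\end{equation*}

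The only delicate point, and the one I would treat most carefully, is the smoothness of $R_k$ (and hence of $Q_k/k$) as an $H^1$-valued map all the way down to $k=0$: this uses the known analytic dependence of $cn(\cdot,k)$ on $k$ on $[0,k_0]$, together with the fact that the scaling factor $\tfrac{2K(k)}{\pi}$ is itself real analytic in $k$ on $[0,1)$. Once this is in place, everything reduces to the two elementary expansions recorded above.
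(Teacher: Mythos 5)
Your proposal is correct, and it follows the same overall strategy as the paper: reduce everything to the elliptic modulus $k$ via the chain rule, isolate the degenerate endpoint $k\to 0^+$, and use $m(k)\asymp k^2$ together with $m'(k)\asymp k$, $m''(k)\asymp 1$ to convert $k$-bounds into the stated $m$-bounds. Your expansions $\omega(k)=-1+\tfrac32 k^2+O(k^4)$ and $m(k)=\pi k^2+O(k^4)$ are accurate and give exactly the asymptotics the paper extracts from the closed-form derivative identities for $K$ and $\mathcal E$.

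Where you genuinely diverge is in the treatment of $\partial_k Q_k$ and $\partial_{kk}Q_k$. The paper differentiates the explicit representation, invoking the Byrd--Friedman formula for $\partial_k cn(x,k)$ and carrying out a lengthy computation (which it does not even display for the second derivative), arriving at $\|\partial_k Q_k\|_{L^2}\lesssim 1$ and $\|\partial_{kk}Q_k\|_{L^2}\lesssim k^{-1}$. You instead factor $Q_k=k\,R_k$ and use the joint analyticity of $cn(\cdot,k)$ and $K(k)$ on $[0,k_0]$, $k_0<1$ (the only singular endpoint being $k=1$), to get uniform $H^1$-bounds on $\partial_k R_k$ and $\partial_{kk}R_k$, hence $\|\partial_k Q_k\|_{L^2}\lesssim 1$ and $\|\partial_{kk}Q_k\|_{L^2}\lesssim 1$. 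This is both shorter and sharper: since $R_k$ is in fact even in $k$ (elliptic functions depend on $k^2$), one even has $\partial_{kk}Q_k=O(k)$, though this extra gain is irrelevant because, as you correctly observe, the dominant contribution to $\partial_{mm}Q_m$ is the term $m''(k)\partial_k Q_k/m'(k)^3\asymp k^{-3}\asymp m^{-3/2}$ in either version. What the paper's explicit route buys is concrete limiting profiles and constants (e.g.\ the value of $\lim_{k\to0}\|\partial_k Q_k\|_{L^2}^2$), none of which are needed for the proposition; what your route buys is robustness, since it only requires smooth dependence on the parameter away from $k=1$ and avoids delicate bookkeeping with the incomplete elliptic integral appearing in $\partial_k cn$. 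The one step you should spell out a little more is the assertion that $k\mapsto R_k$ is $C^2$ as an $H^1(\T)$-valued map on the closed interval $[0,k_0]$: this follows from joint analyticity of $(x,k)\mapsto cn\bigl(\tfrac{2K(k)}{\pi}x,k\bigr)$ on the compact set $[0,2\pi]\times[0,k_0]$ together with $2\pi$-periodicity in $x$, which gives uniform bounds on $\partial_x\partial_k^j R_k$ for $j\le 2$; as written it is only asserted.
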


\begin{proof}
We observe that
 \begin{equation*}
 \frac{d \omega(m)}{dm} = \frac{\partial_{k}\omega(k)}{\partial_{k} m(k)}.
 \end{equation*}
To compute the differentiations in $k$, we use the following notations and properties, which can be found in \cite[p. $283$]{ByFr71}:
\begin{equation*}
 K(k) = \int_0^\frac{\pi}2 \frac{1}{\sqrt{1 - k^2\sin^2(\theta)}} d\theta, \quad \mathcal E (k) = \int_0^\frac{\pi}2 \sqrt{1 - k^2\sin^2(\theta)} d\theta,
\end{equation*}
\begin{equation}\label{eq:KEder}
 \partial_{k} K(k) = \frac{\mathcal E (k) + (k^2 - 1) K(k)}{k(1 - k^2)}, \quad \mathcal \partial_{k} \mathcal E(k) = \frac{\mathcal E (k) - K(k)}k,
\end{equation}
\begin{equation}\label{eq:KEder2}
 \partial_{k} (\mathcal E (k) + (k^2 - 1) K(k)) = k K.
\end{equation}
From the equation for the mass $m$ \eqref{eq:mass}, we have
\begin{equation*}
 m = \frac{8}{\pi} K(\mathcal{E} + (k^2 - 1) K) = \frac{8}{\pi} k^2 K\int \frac{\cos^2(\theta)}{\sqrt{1 - k^2\sin^2(\theta)}} d\theta
\end{equation*}
that is 
\begin{equation}\label{eq:mass1}
 \frac{\pi m} {8K} = \mathcal E + (k^2 - 1)K = k^2 \Theta(k) \in \left[0,1\right]
\end{equation}
where 
\begin{equation*}
 \Theta(k) = \int_0^\frac{\pi}2 \frac{cos^2(\theta)}{\sqrt{1 - k^2\sin^2 (\theta)}} d\theta.
\end{equation*}
Now, by using \eqref{eq:KEder} and \eqref{eq:KEder2}, we can see that
\begin{equation}\label{eq:mderK}
\begin{aligned}
 \partial_{k} \left(\frac{\pi}{8}m(k)\right) & = \frac{(\mathcal{E} + (k^2 - 1) K)^2}{k (1 - k^2)} + k K^2 = \frac{k^4 \Theta^2 + k^2(1 - k^2) K^2}{k (1 -k^2)}.
\end{aligned}
\end{equation}
Moreover, we consider $\omega$ defined in \eqref{eq:omegaCN}
\begin{equation*}
 \frac{\pi^2}{4} \omega(k) = K^2 (2k^2 - 1),
\end{equation*}
and we use again \eqref{eq:KEder} and \eqref{eq:KEder2} to obtain
\begin{equation}\label{eq:dk_ome}
\begin{aligned}
 \partial_{k} \left(\frac{\pi^2}{8} \omega(k)\right) &= \frac{ (2k^2 - 1) K (\mathcal{E} + (k^2 - 1) K)}{k(1 - k^2)} + kK^2 \\ 
 &= \frac{k^2(2k^2 - 1) K \Theta + k^2(1 - k^2) K^2}{k(1 - k^2)}. 
\end{aligned}
\end{equation}
Thus from \eqref{eq:mderK} and \eqref{eq:dk_ome}, we obtain that
\begin{equation}\label{eq:omegaonM}
 \begin{aligned}
 \pi \frac{ \partial_k \omega}{\partial_k m} = \frac{(2k^2 - 1) K \Theta + (1 - k^2) K^2}{k^2 \Theta^2 + (1 - k^2) K^2}.
 \end{aligned}
\end{equation}
Now we observe that 
\begin{equation}\label{eq:thKto0}
 \lim_{k \to 0} \Theta (k) = \frac{\pi}{4}, \quad \lim_{k \to 0} K (k) = \frac{\pi}{2}, \quad \lim_{k \to 0} \mathcal{E} (k) = \frac{\pi}{2}
\end{equation}
which implies
\begin{equation*}
 \lim_{k \to 0} \left((2k^2 - 1) K \Theta + (1 - k^2) K^2\right) = \frac{\pi^2}{8}
\end{equation*}
and 
\begin{equation*}
 \lim_{k \to 0}\left(k^2 \Theta^2 + (1 - k^2) K^2 \right)= \frac{\pi^2}{4}.
\end{equation*}
Thus we obtain that
\begin{equation}\label{eq:kto0}
 \lim_{k \to 0} \frac{ \partial_k \omega(k)}{\partial_k m(k)} = \frac{2}{\pi}.
\end{equation}
On the other hand, for $k$ far from zero, we notice that there exists a constant $C>0$ such that
\begin{equation*}
 k^2 \Theta^2 + (1 - k^2) K^2 > C,
\end{equation*}
and 
\begin{equation*}
 (2k^2 - 1) K \Theta + (1 - k^2) K^2 \leq C m \leq C m_0.
\end{equation*}
In particular, from \eqref{eq:kto0} and from the continuity of $\partial_k \omega(k)$ and $\partial_k m(k)$ with respect to $k$, we obtain that there exists a constant $C = C(m_0)$ such that
\begin{equation*}
 \frac{\partial_k \omega(k)}{\partial_k m(k)} \leq C.
\end{equation*}
It remains to prove \eqref{eq:dm_Qm}. We will now compute the derivative of the cnoidal profile with respect to the mass. We observe that
\begin{equation*}
 \partial_m Q_m = \frac{\partial_k Q_k}{ \partial_k m(k)},
\end{equation*}
and 
\begin{equation} \label{eq:dmm_Qm}
 \partial_{mm} Q_m = (\partial_{k} m(k))^{-2}\left(
 \partial_{kk} Q_k - \frac{\partial_{kk} m(k)}{\partial_{k} m(k)} \partial_{k} Q_k \right),
\end{equation}
where 
\begin{equation*}
 Q_k = \sqrt{2} k\frac{ 2 K(k)}{\pi} cn \left(\frac{2 K(k)}{\pi} x,k \right).
\end{equation*}
We recall that 
\begin{equation}\label{eq:cn_der}
 \partial_kcn(x,k) = \frac{sn(x,k) dn(x,k)}{k (1 - k^2)} \left( (k^2 - 1) x + \mathcal E(k) - k^2 \frac{sn(x,k) cn(x,k)}{dn(x,k)}\right) 
\end{equation}
see \cite[Formula 710.52]{ByFr71}). Thus, by using \eqref{eq:KEder} and \eqref{eq:cn_der}, we compute the differentiation of $Q_k$ with respect to $k$ and obtain that
\begin{equation}\label{eq:QderkLong}
 \begin{aligned}
 \frac{1}{\sqrt{2}} \partial_k Q_k &= \frac{2K}{\pi} cn(y,k) + \frac{2}{\pi} \frac{k^2 \Theta}{(1 - k^2)} cn(y,k) + \frac{2K}{\pi} \frac{2}{\pi} \frac{k^2 \Theta}{(1 - k^2)} \partial_y cn(y,k) \\
 & + \frac{2K}{\pi}\frac{sn(y,k) dn(y,k)}{ (1 - k^2)} \left( (k^2 - 1) y + \mathcal E(k) - k^2 \frac{sn(y,k) cn(y,k)}{dn(y,k)}\right) 
 \end{aligned}
\end{equation}
where 
\begin{equation*}
 y = \frac{2K(k)}{\pi} x.
\end{equation*}
In particular, since
\begin{equation*}
\begin{aligned}
 & \lim_{k\to 0} cn(y,k) = \cos(x), \quad \lim_{k\to 0} sn(y,k) = \sin(x), \quad \lim_{k\to 0} dn(y,k) = \lim_{k\to 0}\frac{2K}{\pi} = 1,
\end{aligned}
\end{equation*}
we have that
\begin{equation*}
 \lim_{k \to 0} \frac{1}{\sqrt{2}} \partial_k Q_k(x) = \cos(x) + \sin(x) \left(\frac{\pi}{2} - x\right),
\end{equation*}
where we used \eqref{eq:thKto0}. We can easily compute that 
\begin{equation*}
 \int_0^{2\pi} \left|\cos(x) + \sin(x) \left(\frac{\pi}{2} - x\right)\right|^2 dx = \frac{7 \pi^3 + 6\pi}{12}. 
\end{equation*}
Thus, after cumbersome but straightforward computations, we can see that 
\begin{equation*}
 \lim_{k \to 0} \| \partial_k Q_k\|_{L^2}^2 = \frac{7 \pi^3 + 6\pi}{6}.
\end{equation*}
As a consequence, equation \eqref{eq:mderK} implies that
\begin{equation*}
 \lim_{k \to 0} \frac{ \| \partial_k Q_k\|_{L^2}}{\partial_k m(k)} \lesssim \lim_{k \to 0} \frac{k (1 - k^2)}{k^4 \Theta^2 + k^2(1 - k^2) K^2} \lesssim \lim_{k\to 0} \frac{k}{k^2} \lesssim \lim_{k \to 0} \frac{1}{k} = \infty.
\end{equation*}
Consequently, close to $k= 0$, this fraction behaves like $k^{-1}$. On the other hand, from \eqref{eq:mass1} and \eqref{eq:thKto0}, we see that $m \sim k^2$ when $k$ is close to zero. Thus we can conclude that 
\begin{equation}\label{eq:dk_QoverM}
 \frac{ \| \partial_k Q_k\|_{L^2}}{\partial_k m(k)} \lesssim \frac{1}{\sqrt{m}}
\end{equation}
for $k$ close to zero. On the other hand, for $k$ far from zero, this fraction is always bounded by a constant depending on $m_0$. This implies the first bound in \eqref{eq:dm_Qm} for any $m \in [0,m_0]$.
\newline
Finally, it remains to prove the second bound in \eqref{eq:dm_Qm}. By using \eqref{eq:mderK}, \eqref{eq:KEder}, \eqref{eq:KEder2} and \eqref{eq:mass1}, we see that
\begin{equation*}
 \begin{aligned}
 \frac{\pi}{8}\partial_{kk} m(k) & = \frac{2(\mathcal{E} + (k^2 -1)K^2) kK}{k(1 - k^2)} - \frac{(\mathcal{E} + (k^2 -1)K^2)^2(1 - 3k^2)}{k^2(1 - k^2)^2} \\
 & + K^2 + \frac{2 k K (\mathcal{E} + (k^2 -1)K^2) }{k(1 - k^2)} \\
 &= \frac{2 k^2 \Theta K}{(1 - k^2)} - \frac{k^2 \Theta^2 (1 - 3k^2)}{(1 - k^2)^2} + K^2 + \frac{2k^2 K\Theta}{(1 - k^2)^2}.
 \end{aligned}
\end{equation*}
In particular, since $K(k) \to \frac{\pi}{2}$ as $k \to 0$, we have that 
\begin{equation}\label{eq:dkk_m}
 \lim_{k \to 0}\partial_{kk} m(k) = 2\pi.
\end{equation}
Lastly, we shall compute $\partial_{kk} Q_k$. It is a very long and technical computation, but using \eqref{eq:QderkLong} and several Formulas in \cite[p. $283$]{ByFr71}, one can see that 
\begin{equation*}
 \| \partial_{kk} Q_k\|_{L^2} \lesssim \frac{1}{k}
\end{equation*}
for $k$ close to zero. Using again the fact that $m \sim k^2$ for $k$ close to zero, we obtain from \eqref{eq:dmm_Qm}, \eqref{eq:dkk_m} and \eqref{eq:dk_QoverM} that 
\begin{equation*}
 \| \partial_{mm} Q_m \|_{L^2} \lesssim \frac{\| \partial_{kk} Q_k\|_{L^2}}{(\partial_k m(k))^2} + \frac{\| \partial_k Q_k\|_{L^2}}{\partial_k m(k)} \frac{\partial_{kk} m(k)}{(\partial_k m(k))^2} \lesssim \frac{1}{k^3} \sim \frac{1}{m^\frac{3}{2}},
\end{equation*}
for $k$ close to zero. Again, since this fraction is bounded for $k$ far from zero, this implies the second inequality in \eqref{eq:dm_Qm} for any $m \in [0,m_0]$. 
\end{proof}

\bibliographystyle{plain}
\bibliography{bibliomain}  

\end{document}